\newtheorem{theorem}{Theorem}[section]
\newtheorem{definition}[theorem]{Definition}
\newtheorem{cor}[theorem]{Corollary}
\newtheorem{lem}[theorem]{Lemma}
\newtheorem{pro}[theorem]{Proposition}
\numberwithin{equation}{section}
\newtheorem{remark}[theorem]{Remark}
\newtheorem{example}{Example}
\begin{document}
\title{\vspace{-1cm} \bf Optimal Sobolev regularity of $\bar\partial$ on the Hartogs triangle  \rm}
\author{Yifei Pan\ \  and\ \  Yuan Zhang}
\date{}

\maketitle

\begin{abstract}
  In this paper, we show that for each $k\in \mathbb Z^+, p>4$, there exists  a solution operator $\mathcal T_k$ to the $\bar\partial$ problem on the Hartogs triangle that maintains the  same $W^{k, p}$ regularity as that of the data.  
  According to a Kerzman-type example, this   operator provides solutions with the optimal Sobolev regularity.

\end{abstract}

\renewcommand{\thefootnote}{\fnsymbol{footnote}}
\footnotetext{\hspace*{-7mm}
\begin{tabular}{@{}r@{}p{16.5cm}@{}}
& 2010 Mathematics Subject Classification.  Primary 32W05; Secondary 32A25.\\
& Key words and phrases. Hartogs triangle,  $\bar\partial$, Sobolev regularity, refined Muckenhoupt's class.   
\end{tabular}}

\section{Introduction}

 The Hartogs triangle   $$ \mathbb{H} = \{(z_1, z_2)\in \mathbb{C}^2: |z_1|< |z_2| < 1\}  $$
 is  a pseudoconvex domain with non-Lipschitz boundary. It serves as a model counterexample for many questions in several complex variables. For instance, it does not admit a Stein neighborhood basis  or a bounded plurisubharmonic exhaustion function. 
 Meanwhile,  Chaumat and Chollet showed in \cite{CC} that the corresponding $\bar\partial$ problem on $\mathbb{H}$ is not globally regular  in the sense that there is a smooth $\bar\partial$-closed $(0, 1)$-form $\mathbf f$   on $\overline{\mathbb{H} }$, such that  $\bar\partial u =\mathbf f$  has no smooth solution on $\overline{\mathbb{H} }$. Interestingly, at each H\"older level the $\bar\partial$ equation does  admit H\"older solutions with the same H\"older regularity as that of the data. For more properties on $\mathbb H$ please refer to a survey  \cite{Sh} of Shaw. On the other hand, the study of Sobolev regularity was initiated by Chakrabarti and Shaw in \cite{CS}, where they  carried out a weighted $L^2$-Sobolev estimate  for the canonical solution on $\mathbb H$. See also a recent work \cite{YZ} of  Yuan and the second author on weighted $L^p$-Sobolev estimates of $\bar\partial$ on general quotient domains.

 The goal of this paper is  to study the   optimal $\bar\partial $  regularity   on $\mathbb H$ at each  (unweighted)  Sobolev level. Recently, the optimal $L^p$ regularity of $\bar\partial$ on $\mathbb H$ was obtained by the second author in \cite{Zhang2}.  The following is our main theorem concerning the $W^{k, p}$ regularity, $ k\ge 1$. As demonstrated by  a Kerzman-type Example \ref{ex} (in Section 4), it gives  the optimal $W^{k,p}$ regularity in the sense that  for any $\epsilon>0$, there exists a $W^{k, p}$ datum which has no $W^{k, p+\epsilon}$ solution to $\bar\partial$ on $\mathbb H$.
 
\begin{theorem}\label{main}
For each $k\in \mathbb Z^+, 4<p<\infty$, there exists a solution operator $\mathcal T_k$ such that for any $\bar\partial$-closed $(0, 1)$ form $\mathbf f\in W^{k,p}(\mathbb H)$,  $\mathcal T_k\mathbf f\in W^{k,p}(\mathbb H)$ and solves  $\bar\partial u =\mathbf f$ on $\mathbb H$. Moreover, there exists a constant $C$ dependent only on   $k$ and $p$  such that   \begin{equation*}
    \|\mathcal T_k\mathbf f\|_{W^{k,p}(\mathbb H)}\le C\|\mathbf f\|_{W^{k,p}(\mathbb H)}.
\end{equation*} 
\end{theorem}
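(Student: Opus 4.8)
The plan is to transplant the problem to the product domain $\Delta\times\Delta^*$ (here $\Delta$ is the unit disc, $\Delta^*$ the punctured disc) through the biholomorphism $\Phi(z_1,z_2)=(z_1/z_2,z_2)$, with inverse $\Phi^{-1}(w_1,w_2)=(w_1w_2,w_2)$, and to build $\mathcal T_k$ out of one-variable Cauchy transforms. Setting $\tilde u=u\circ\Phi^{-1}$, the equation $\bar\partial u=\mathbf f$ becomes $\bar\partial\tilde u=\tilde{\mathbf f}$ on $\Delta\times\Delta^*$, where $\tilde{\mathbf f}=\bar w_2\,(f_1\circ\Phi^{-1})\,d\bar w_1+\big(\bar w_1\,(f_1\circ\Phi^{-1})+f_2\circ\Phi^{-1}\big)\,d\bar w_2$. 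Since $\Phi$ is singular along $\{w_2=0\}$, a change of variables turns $\|\cdot\|_{W^{k,p}(\mathbb H)}$ into a \emph{weighted} Sobolev norm on $\Delta\times\Delta^*$: the Jacobian inserts the weight $|w_2|^2$ into the measure, and since $\partial_{z_1}=w_2^{-1}\partial_{w_1}$ and $\partial_{z_2}=\partial_{w_2}-w_1w_2^{-1}\partial_{w_1}$, a derivative of order $m$ in the $w_1$-direction must be measured in $L^p(|w_2|^{2-mp}\,dV_w)$. Keeping track of the compensating powers of $w_2$ and $\bar w_2$---already visible in the $\bar w_2$ multiplying the first component of $\tilde{\mathbf f}$---is the crux of the bookkeeping.

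I would then solve $\bar\partial\tilde u=\tilde{\mathbf f}$ in two steps. First, solve in the $w_1$-slot over the fixed disc $\Delta$: put $\tilde u_1:=\bar w_2\,\mathcal C_{w_1}[f_1\circ\Phi^{-1}]$, where $\mathcal C_{w_1}$ is the solid Cauchy transform in $w_1$, so that $\partial_{\bar w_1}\tilde u_1$ is the first component of $\tilde{\mathbf f}$. The factor $\bar w_2$ out front, together with $L^p$-boundedness of the Beurling transform $\mathcal B_{w_1}=\partial_{w_1}\mathcal C_{w_1}$ in the $w_1$-variable (with $w_2$ only a parameter, so no $w_2$-weight is felt there) and the extra powers of $w_2$ gained when $w_1$-derivatives are re-expressed through $z_1$-derivatives of $f_1$, yield the required weighted bounds for $\tilde u_1$. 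Second, the residual $\tilde{\mathbf f}-\bar\partial\tilde u_1=g\,d\bar w_2$ has $g$ holomorphic in $w_1$ (by $\bar\partial$-closedness), and one solves $\partial_{\bar w_2}\tilde u_2=g$ by a Cauchy-type transform in $w_2$ over $\Delta^*$, arranged so as to preserve holomorphy in $w_1$ and, through a holomorphic-in-$w_1$ correction term and/or a modification of the kernel near $w_2=0$, to land in the weighted space prescribed by the first step. Then $\mathcal T_k\mathbf f:=(\tilde u_1+\tilde u_2)\circ\Phi$, and the estimate follows by transplanting the weighted bound back through $\Phi$.

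The analytic heart---and the step I expect to be the main obstacle---is the weighted $W^{k,p}$ estimate for the correction $\tilde u_2$. After differentiating and using that $p>4$ forces $\mathbf f\in W^{k,p}(\mathbb H)\hookrightarrow C^0(\overline{\mathbb H})$ (so that $g$ is fibrewise bounded and its $w_1$-Taylor coefficients obey Cauchy estimates), everything reduces to bounds of the form $\|\mathcal C_{w_2}h\|_{L^p(|w_2|^{a})}\lesssim\|h\|_{L^p(|w_2|^{a})}$ and the analogue for $\mathcal B_{w_2}$, over a family of exponents $a$ depending on $k$, $p$ and the order of differentiation. Several of these weights $|w_2|^{a}$ fall outside the classical Muckenhoupt window $A_p(\mathbb C)$; the point of the \emph{refined Muckenhoupt class} is that, on the bounded disc and in the presence of the holomorphy in $w_1$ and the $\bar w_2$-gains inherited from the first step, a weaker (one-sided, boundary-anchored) condition already secures these singular-integral bounds---and it holds precisely in the range $p>4$. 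Checking that every exponent produced by the Jacobian, the rescaled fibres, the Cauchy estimates and the $\bar w_2$ in $\tilde{\mathbf f}$ lands inside this refined window, uniformly enough for the relevant series to converge, is the delicate part; granting it, summing the contributions of $\tilde u_1$ and $\tilde u_2$ and changing variables back to $\mathbb H$ completes the proof.
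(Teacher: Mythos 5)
Your framework is right as far as it goes---pull back to $\triangle\times\triangle^*$ via the quotient map, track the Jacobian weight $|w_2|^2$, and solve via slicewise Cauchy transforms in the Nijenhuis--Woolf pattern---but there is a genuine gap at the heart of the argument. Without any further device, pulling $\mathbf f$ back to the bidisc and solving with the weighted product-domain estimate produces a solution that lands only in $W^{k,p}(\mathbb H,|z_2|^{kp})$, i.e.\ with a \emph{loss} of $k$ powers of $|z_2|$ at each Sobolev level (this is exactly Corollary 1.3 in the paper); the naive push-forward simply does not give an unweighted $W^{k,p}$ estimate. The essential idea you are missing is the Ma--Michel truncation: subtract from $\mathbf f$ its $(k-1)$-th order Taylor polynomial $\mathcal P_k\mathbf f$ at $(0,0)$ (solve $\bar\partial u_k=\mathcal P_k\mathbf f$ separately---it is a polynomial form on a bidisc, which is trivial), and later subtract from the product-domain solution its $(k-1)$-th order holomorphic $w_2$-Taylor polynomial at $w_2=0$. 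The gain is that the truncated pull-back satisfies $\mathcal P_{2,k}\tilde{\mathbf f}=0$, and a weighted Hardy-type inequality (Lemma 4.2 of the paper) then yields the refined bounds $\|\,|w_2|^{-k+s}\nabla^t_{w_1}\nabla^s_{w_2}\tilde{\mathbf f}\|_{L^p(|w_2|^2)}\lesssim\|\mathbf f\|_{W^{k,p}(\mathbb H)}$, which are precisely strong enough to cancel the $|z_2|^{-k}$ blow-up when pushing the (also truncated) solution forward. Your suggestion to "modify the kernel near $w_2=0$" or insert a "holomorphic-in-$w_1$ correction term" is where the truncation ought to live, but you never specify it, and without it the singular-integral bounds you invoke are false in the relevant weight range.

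You also misidentify what the refined Muckenhoupt class $A_p^*$ is and where the restriction $p>4$ comes from. $A_p^*$ (Definition 2.1) is a \emph{sliced} Muckenhoupt condition---the restriction of $\mu$ to each one-dimensional coordinate slice must lie in the classical $A_p$ uniformly---so $A_p^*\subset A_p$; it is not a "weaker, one-sided, boundary-anchored" condition, and it does not secure bounds for weights outside the classical window. The Jacobian weight $|w_2|^2$ lies in $A_p^*$ already for all $p>2$, so the weight class alone has nothing to do with $p>4$. The actual sources of the $p>4$ hypothesis are: (i) the Sobolev embedding $W^{k,p}(\mathbb C^2)\hookrightarrow C^{k-1,\alpha}$ (needed so that the Taylor polynomials $\mathcal P_k$, $\mathcal P_{2,k}$ are classically defined), and (ii) the weighted Hardy-type inequality, whose proof requires the boundary term $\epsilon^{3-lp}\int_{b\triangle_\epsilon}|h|^p\,d\sigma$ to vanish, which in turn uses $\int_0^\epsilon t^{-3q/p}\,dt<\infty$, i.e.\ $p>4$. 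Neither of these appears in your sketch, so even if you add the truncation, you would still need to supply the Hardy-type inequality to make the weighted bookkeeping close.
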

\medskip

The general idea of the proof is as follows. According to a heuristic procedure to treat the $\bar\partial$ problem on the Hartogs triangle $\mathbb H$,  one first  uses the biholomorphism between  the punctured bidisc and  $\mathbb H$ to pull back the data and  solve  $\bar\partial$ on the punctured bidisc, and then pushes the solutions forward   onto the Hartogs triangle.
As a consequence of this,  the corresponding Sobolev regularity of the  $\bar\partial$ problem  requires a weighted Sobolev regularity on product domains due to the presence of the nontrivial Jacobian of the biholomorphism. 
Based upon our recent weighted Sobolev result \cite{PZ2} about  Cauchy-type integrals, we first obtain the following Sobolev regularity for   $\bar\partial$ on product domains with respect to weights in some refined Muckenhoupt space $A_p^*$ (see Definition \ref{aps}).

\begin{theorem}\label{mainp}
Let $\Omega = D_1\times \cdots D_n, n\ge 2$, where each $D_j$ is a bounded domain in $\mathbb C$ with $C^{k,  1}$ boundary. There exists a solution operator $T$ such that for any $\bar\partial$-closed $(0, q)$ form $\mathbf f\in W^{k+n-2, p}(\Omega, \mu), k\in \mathbb Z^+, 1<p<\infty, \mu\in A_p^* $,  $T\mathbf f\in W^{k, p}(\Omega, \mu)$ and solves $\bar\partial u = \mathbf f$ on $\Omega$. Moreover, there exists a constant $C$ dependent only on $\Omega$, $k, p$ and the $A_p^*$ constant of  $\mu$  such that 
\begin{equation*} 
     \|T\mathbf f\|_{W^{k, p}(\Omega, \mu) } \le C\|\mathbf f\|_{W^{k+n-2, p}(\Omega, \mu)  }.
\end{equation*}
\end{theorem}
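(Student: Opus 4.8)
The plan is to construct the solution operator $T$ on the product domain $\Omega = D_1 \times \cdots \times D_n$ by the classical Dolbeault–Grothendieck induction on the number of factors, peeling off one variable at a time via one-dimensional Cauchy transforms in each $D_j$, and then controlling the weighted Sobolev norms of the resulting iterated integrals using the weighted estimates for Cauchy-type integrals from \cite{PZ2}. Concretely, writing $z = (z_1, \dots, z_n)$ and $\mathbf f = \sum_j f_{\bar J}\, d\bar z_J$, one first uses the solid Cauchy transform $C_{z_n}$ in the last variable to kill all $d\bar z_n$-components of $\mathbf f$; since $\mathbf f$ is $\bar\partial$-closed, the remaining form after subtracting $\bar\partial(C_{z_n}\text{-piece})$ has coefficients holomorphic in $z_n$ and depending $\bar\partial$-closedly on $(z_1, \dots, z_{n-1})$, so one may recurse. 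The key point is bookkeeping: each application of a solid Cauchy transform in one planar variable is, by the one-dimensional theory, \emph{smoothing by one derivative in that variable}, but only \emph{continuous} (not smoothing) in the transverse variables; hence after $n-1$ such steps we can only guarantee that $k+n-2$ derivatives of the data yield $k$ derivatives of the solution, which is exactly the loss recorded in the statement. When $q \geq 2$ the form has several components and the intermediate forms must be organized so that the $\bar\partial$-closedness is propagated correctly; this is standard but must be done with care to keep the operator linear and the estimates clean.

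The heart of the matter is the weighted Sobolev estimate for each elementary step. For a fixed planar factor $D_j$ with $C^{k,1}$ boundary, and $\mu \in A_p^*$, I would invoke the main result of \cite{PZ2} on the solid Cauchy integral
$$
(\mathbf C_j g)(z) = \frac{1}{2\pi i}\int_{D_j} \frac{g(\zeta, z')}{\zeta - z_j}\, d\zeta \wedge d\bar\zeta,
$$
which should say: $\mathbf C_j$ maps $W^{m,p}(D_j, \mu_j)$ into $W^{m+1, p}(D_j, \mu_j)$ in the $z_j$-variable, with the operator norm controlled by the $A_p^*$ constant of the slice weight $\mu_j$, and acts continuously on derivatives in the frozen parameters $z'$. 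The refined Muckenhoupt class $A_p^*$ (Definition \ref{aps}) is designed precisely so that this slicing is uniform: the weight $\mu$ restricts to an $A_p$-type weight on almost every complex line parallel to the $z_j$-axis, with a uniform constant, so Fubini plus the one-variable weighted estimate gives the full weighted $W^{m,p}(\Omega,\mu)$ bound after integrating in the remaining variables. One then iterates, losing one order of regularity per transverse pass, and sums the finitely many terms.

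The main obstacle I anticipate is not any single estimate but the \emph{interaction} between three things at once: (i) the weight $\mu$ must be handled by its one-dimensional slices, which is exactly what the $A_p^*$ hypothesis buys us, but one must verify that differentiating the Cauchy kernel in the parameter variables $z'$ does not destroy the $A_p^*$ structure in the remaining factors (it should not, since those derivatives hit only the numerator $g$, not the weight); (ii) the boundary regularity $C^{k,1}$ of each $D_j$ must be sufficient to run the $W^{k+1}$-level Cauchy estimate from \cite{PZ2}, so the regularity budget $k, k+1, \dots$ has to be tracked against the number of derivatives that the kernel can absorb near $\partial D_j$; and (iii) the combinatorics of the $(0,q)$ case for general $q$, where one must check that the homotopy formula assembled from the $\mathbf C_j$'s indeed produces a $\bar\partial$-closed remainder at each stage and terminates with zero. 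Once the one-variable weighted smoothing estimate is granted as a black box, the rest is a careful but routine induction; I would therefore organize the write-up so that the single-factor weighted estimate is stated as a lemma (citing \cite{PZ2}), and then the product case follows by the homotopy operator $T = \sum (\text{alternating compositions of } \mathbf C_j)$ together with $n-1$ applications of that lemma.
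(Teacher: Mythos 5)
Your high-level plan --- peel off one variable at a time via one-dimensional Cauchy transforms and control each step using the weighted one-variable estimate from \cite{PZ2}, exploiting the slicing behaviour of $A_p^*$ --- is the same as the paper's, which works with the explicit Nijenhuis--Woolf homotopy operator $T\mathbf f = T_1\pi_1\mathbf f + T_2S_1\pi_2\mathbf f + \cdots + T_nS_1\cdots S_{n-1}\pi_n\mathbf f$ obtained from exactly that recursion. However, there are two genuine gaps.

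First, your bookkeeping for the derivative loss is wrong, even though it happens to land on the right number. You attribute the loss to the solid Cauchy transforms being ``only continuous (not smoothing) in the transverse variables''; but mere continuity costs nothing, so by your accounting there would be no loss at all. The actual loss comes from the \emph{boundary} Cauchy operators $S_j$ that the Dolbeault recursion produces: when you subtract $\bar\partial T_n f_n$ from $\mathbf f$, the remaining $d\bar z_j$-components become $S_n f_j$ by the Cauchy--Green formula, and $S$ has no $L^p$ theory ($k=0$ is excluded in \eqref{So}), so $S_j:W^{m,p}(\Omega,\mu)\to W^{m-1,p}(\Omega,\mu)$ loses exactly one derivative per application. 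The last term in the homotopy formula contains $n-1$ boundary operators and one solid operator that regains a derivative in its own variable, yielding the net loss of $n-2$. Without isolating the boundary operators and their failure at the $L^p$ level, you cannot justify the index $k+n-2$.

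Second, you never address the fact that the Nijenhuis--Woolf formula is proved to solve $\bar\partial$ only for $\mathbf f\in C^1(\bar\Omega)$, whereas here $\mathbf f$ is merely in a weighted Sobolev class. Showing that $T\mathbf f$ is a weak solution for such $\mathbf f$ is a real step: the paper exhausts $\Omega$ by smaller products $\Omega^{(m)}$ with $C^{k,1}$ boundary, mollifies $\mathbf f$ on each $\Omega^{(m)}$, and passes to the limit in the distributional pairing, controlling the boundary Cauchy integrals via the trace theorem and a change of variables argument. This approximation argument occupies the bulk of the paper's proof and is not ``routine''; your proposal silently assumes its conclusion.
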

\medskip

As shown by Example \ref{ex2} (in Section 3),  Theorem \ref{mainp} gives the  optimal Sobolev regularity of solutions on product domains with dimension $n=2$.  Jin and Yuan   obtained in \cite{JY} a similar Sobolev estimate  for polydiscs in the case when  $\mu \equiv 1$ and $q=1$.   It is also worth pointing out that  the operator $T$ considered in   Theorem \ref{mainp} fails to maintain the $L^p$ (where $k=0$) regularity in general. See \cite{CM} of Chen and McNeal for a $\bar\partial$-closed (0,1) form $\mathbf f$ in $L^p(\triangle^2)$ such that that  $T\mathbf f$ fails to lie in $L^p(\triangle^2), p<2$. Instead, \cite{Zhang2} made use of the canonical solution  operator to provide an optimal weighted $L^p$ regularity for $\bar\partial$ on product domains in $\mathbb C^n$.

Theorem \ref{mainp} readily gives a   semi-weighted $L^p$-Sobolev estimate  below for a (fixed) solution operator  to $\bar\partial$ on $\mathbb H, p>2$.

\begin{cor}\label{main4}
There exists a solution operator $\mathcal T$ such that for any $\bar\partial$-closed $(0, 1)$ form $\mathbf f\in W^{k,p}(\mathbb H), k\in \mathbb Z^+, 2< p<\infty$,  $\mathcal T\mathbf f\in W^{k,p}(\mathbb H, |z_2|^{kp})$ and solves  $\bar\partial u =\mathbf f$ on $\mathbb H$. Moreover, there exists a constant $C$ dependent only on   $k$ and $p$  such that    \begin{equation*}
    \|\mathcal T\mathbf f\|_{W^{k,p}(\mathbb H, |z_2|^{kp})}\le C\|\mathbf f\|_{W^{k,p}(\mathbb H)}.
\end{equation*} 
\end{cor}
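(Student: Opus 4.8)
The plan is to reduce the statement to Theorem \ref{mainp} through the standard biholomorphism
\[
\Phi\colon \triangle\times\triangle^{\ast}\longrightarrow \mathbb H,\qquad \Phi(w_1,w_2)=(w_1w_2,\,w_2),
\]
where $\triangle^{\ast}=\triangle\setminus\{0\}$, with inverse $\Psi(z_1,z_2)=(z_1/z_2,\,z_2)\colon\mathbb H\to\triangle\times\triangle^{\ast}$. Given a $\bar\partial$-closed $(0,1)$ form $\mathbf f=f_1\,d\bar z_1+f_2\,d\bar z_2\in W^{k,p}(\mathbb H)$, I would first pull it back: $\Phi^{\ast}\mathbf f=g_1\,d\bar w_1+g_2\,d\bar w_2$ with $g_1=(f_1\circ\Phi)\bar w_2$ and $g_2=(f_1\circ\Phi)\bar w_1+f_2\circ\Phi$. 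Since $\Phi$ is a polynomial map with bounded first derivatives on $\triangle\times\triangle^{\ast}$, the chain and product rules give $|\partial^\beta g_i|\lesssim\sum_{j}\sum_{|\gamma|\le|\beta|}|(\partial^\gamma f_j)\circ\Phi|$ for $|\beta|\le k$. Changing variables by $w=\Psi(z)$ — whose real Jacobian is $|z_2|^{-2}$, with $w_2=z_2$ — the Jacobian factor exactly cancels the weight $|w_2|^2$ against the unweighted measure on $\mathbb H$, giving
\[
\|\Phi^{\ast}\mathbf f\|_{W^{k,p}(\triangle\times\triangle^{\ast},\;|w_2|^{2})}\;\lesssim\;\|\mathbf f\|_{W^{k,p}(\mathbb H)}.
\]

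Next I would extend $\Phi^{\ast}\mathbf f$ across $\{w_2=0\}$. Because $\Phi^{\ast}\mathbf f$ is $\bar\partial$-closed on $\triangle\times\triangle^{\ast}$ and its coefficients, together with their derivatives up to order $k$, lie in $L^p$ with $p>2$, a cutoff argument removes the set $\{w_2=0\}$: the tube of radius $\varepsilon$ about it has volume $O(\varepsilon^{2})$, and by H\"older's inequality the commutator term is controlled by $\varepsilon^{-1}(\varepsilon^{2})^{1/p'}\to0$, which holds precisely because $p>2$. Hence the coefficients in fact lie in $W^{k,p}(\triangle^2,|w_2|^2)$ and the extended form is $\bar\partial$-closed on all of $\triangle^2$. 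The same threshold $p>2$ guarantees that $|w_2|^2$, as a weight depending only on $w_2$, belongs to the refined Muckenhoupt class $A_p^{\ast}$ of Definition \ref{aps} (in the $w_2$ variable it lies in the one-variable class $A_p(\mathbb C)$ exactly when $p>2$), with $A_p^{\ast}$-constant depending only on $p$. Also, each factor $\triangle$ has $C^\infty$ (hence $C^{k,1}$) boundary, so the hypotheses of Theorem \ref{mainp} are met with $\Omega=\triangle^2$ and $n=2$.

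Now Theorem \ref{mainp} applies with $\Omega=\triangle^2$, $n=2$ (so $k+n-2=k$), and $\mu=|w_2|^{2}$: there is a solution operator $T$ with $v:=T(\Phi^{\ast}\mathbf f)\in W^{k,p}(\triangle^2,|w_2|^2)$, $\bar\partial v=\Phi^{\ast}\mathbf f$ on $\triangle^2$, and $\|v\|_{W^{k,p}(\triangle^2,|w_2|^2)}\le C\|\Phi^{\ast}\mathbf f\|_{W^{k,p}(\triangle^2,|w_2|^2)}$ with $C=C(k,p)$. I then set $\mathcal T\mathbf f:=v\circ\Psi$. Since $\bar\partial$ commutes with pullback by holomorphic maps and $\Phi\circ\Psi=\mathrm{id}_{\mathbb H}$, we get $\bar\partial(\mathcal T\mathbf f)=\Psi^{\ast}\bar\partial v=\Psi^{\ast}\Phi^{\ast}\mathbf f=\mathbf f$ on $\mathbb H$. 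For the norm bound, the chain rule for $v\circ\Psi$, together with $|z_1|\le|z_2|$ to control the derivatives of $z_1/z_2$, yields $|\partial^\alpha(v\circ\Psi)(z)|\lesssim|z_2|^{-|\alpha|}\sum_{|\beta|\le|\alpha|}|(\partial^\beta v)\circ\Psi(z)|$; changing variables $z=\Phi(w)$ (real Jacobian $|w_2|^2$) and using $(k-|\alpha|)p+2\ge2$ together with $|w_2|\le1$,
\[
\int_{\mathbb H}|\partial^\alpha(v\circ\Psi)|^p\,|z_2|^{kp}\,dV(z)\;\lesssim\;\int_{\triangle^2}\Bigl(\textstyle\sum_{|\beta|\le k}|\partial^\beta v|\Bigr)^p|w_2|^2\,dV(w)\;\lesssim\;\|v\|_{W^{k,p}(\triangle^2,|w_2|^2)}^p .
\]
Summing over $|\alpha|\le k$ and chaining the three displayed estimates gives $\|\mathcal T\mathbf f\|_{W^{k,p}(\mathbb H,|z_2|^{kp})}\le C\|\mathbf f\|_{W^{k,p}(\mathbb H)}$ with $C=C(k,p)$, as claimed.

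The step needing genuine care is the removable-singularity argument: verifying that $\Phi^{\ast}\mathbf f$, which a priori lives only on the punctured bidisc, really defines an element of $W^{k,p}(\triangle^2,|w_2|^2)$ that is still $\bar\partial$-closed — this is exactly where the hypothesis $p>2$ enters on the product side — along with the bookkeeping of how the Jacobian factor $|w_2|^2$ interacts with the weights at both ends. Everything else is the chain rule and the two changes of variables.
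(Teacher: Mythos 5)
Your proposal is correct and follows essentially the same route as the paper: pull back by the biholomorphism to the punctured bidisc (with the $|w_2|^2$ weight arising from the Jacobian), note $|w_2|^2\in A_p^*$ for $p>2$, extend across $\{w_2=0\}$, solve via Theorem \ref{mainp} with $n=2$, push forward, and track the weight $|z_2|^{kp}$ through the chain rule using $|z_1|<|z_2|$. The only cosmetic difference is that you sketch the cutoff/removable-singularity argument for $\bar\partial$-closedness across $\{w_2=0\}$, whereas the paper cites a reference for that step.
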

\medskip

The  estimate  in Corollary \ref{main4}  maintains  the Sobolev index $(k, p)$, and in particular improves  a   result   in \cite{YZ}.  We note that the $p>2$ assumption in the corollary  is due to the fact that the weight after pulling the data on $\mathbb H$ back  to the bidisc 
lies in $A^*_p$ only when  $p>2$, where Theorem \ref{mainp} can be applied.  Unfortunately, the solution operator $\mathcal  T$ here subjects to  some quantified loss in the exponent of the weight at each Sobolev level. Although this weight loss is not unexpected due to the global irregularity of $\bar\partial$ on $\mathbb H$, $ \mathcal T$ does not provide an optimal Sobolev regularity.


In order to obtain the optimal Sobolev regularity for $\bar\partial$ on $\mathbb H$, one needs to  further adjust the solution operator $ \mathcal T$ in Corollary \ref{main4} accordingly at different Sobolev levels. 
In fact, we apply  to $ \mathcal T$ a surgical procedure -- truncation by Taylor polynormials: one   on the data, and another  on the $\bar\partial$ solution on the punctured bidisc. The idea was initially introduced by Ma and Michel in \cite{MM} to treat   the H\"older regularity.  In the Sobolev category when $p>4$, this procedure at order $k-1$ is  meaningful and in the strong (continuous) sense  due to the Sobolev embedding theorem. Note that the top $k$-th order  derivatives are  still in the weak (distributional) sense where we need to use discretion. After a careful inspection of the post-surgical regularity on the pull-back of the data and push-forward of the solutions on the punctured bidisc, we utilize a weighted Hardy-type inequality to  obtain a sequence of refined  Sobolev estimates. These estimates eventually allow   the  weight loss from the singularity at $(0,0)$ to be precisely (and fortunately)  compensated by the weight gain from the truncation, so that the truncated solution enjoys the  (unweighted)   Sobolev regularity in Theorem \ref{main}. Throughout our proof, the assumptions $k\ge 1, p>4$ are crucial and repeatedly used. It is  not clear   whether the theorem still holds if $p\le 4$.   
\medskip

The organization of the paper is as follows. In Section 2, we give notations and preliminaries that are needed in the paper. In Section 3, we prove Theorem \ref{mainp} for the weighted Sobolev estimate on product domains, from which Corollary \ref{main4} follows. Section 4 is devoted to the proof of the main Theorem \ref{main} for the Sobolev estimate on the Hartogs triangle.  
 
\section{Notations and preliminaries}
 
\subsection{Weighted Sobolev spaces}

Denote by  $|S|$  the Lebesgue measure of a subset  $S$ in $\mathbb C^n$,  and $dV_{z_j}$  the volume integral element in the complex $z_j$ variable.  For  $z=(z_1, \cdots, z_n)\in \mathbb C^n$, let $\hat z_j =(z_1, \cdots, z_{j-1}, z_{j+1}, \cdots, z_n)\in \mathbb C^{n-1}$, where the $j$-th component of $z$ is skipped. Our weight space under consideration is as follows. 

\begin{definition}\label{aps}
Given $1<p<\infty$, a weight $\mu: \mathbb C^n\rightarrow [0, \infty)$ is said to be in $ A^*_p$ if the $A_p^*$ constant 
$$ A_p^*(\mu): = \sup \left(\frac{1}{|D|}\int_{D}\mu(z)dV_{z_j}\right)\left(\frac{1}{|D|}\int_{D} \mu(z)^{\frac{1}{1-p}}dV_{z_j}\right)^{p-1}<\infty, $$
 where the supremum is taken over  a.e. $\hat z_j\in \mathbb C^{n-1}, j=1, \ldots, n $, and   all discs $D\subset \mathbb C$. 
\end{definition}

When $n=1$, the $A_p^*$ space coincides with  the standard  Muckenhoupt's class $A_p$, the collection of all weights $\mu: \mathbb C^n\rightarrow [0, \infty)$ satisfying 
 \begin{equation*}
   A_p(\mu): =  \sup \left(\frac{1}{|B|}\int_{B}\mu(z)dV_z\right)\left(\frac{1}{|B|}\int_{B} \mu(z)^{\frac{1}{1-p}}dV_z\right)^{p-1}<\infty, \end{equation*}
where the supremum is taken over all balls $B\subset \mathbb C^n$. Clearly, $A_q\subset A_p$ if $1< q<p<\infty$. $A_p$ spaces also satisfy an open-end property:  if $\mu\in A_p$ for some $p>1$, then  $\mu\in A_{\tilde p} $ for some ${\tilde p}<p$.  See \cite[Chapter V]{Stein} for more details of the $A_p$ class. 

When $n\ge 2$, Definition \ref{aps} essentially says that  $\mu \in A_p^*$ if and only if the restriction of $\mu$ on any complex  one-dimensional slice $ \hat z_j$  belongs to $A_p$, with a uniform $A_p$ bound independent of $\hat z_j$. On the other hand, $\mu\in A^*_p$ if and only if the $\delta$-dilation $\mu_\delta(z): =\mu(\delta_1z_1, \ldots, \delta_n z_n)\in A_p$  with a uniform $A_p$ constant for all $\delta =(\delta_1, \ldots, \delta_n)\in  (\mathbb R^+)^n$ (see \cite[pp. 454]{GR}).   This in particular implies  $A^*_p\subset A_p$.    As will be seen in the rest of the paper,  the setting of $A^*_p$ weights allows us to apply the  slicing property  of product domains rather effectively. 
 
\medskip
Let $\Omega$ be a bounded domain in $\mathbb C^n$. Denote by $\mathbb Z^+$ the set of all positive integers. Given $k\in \mathbb Z^+\cup\{0\}, p\ge 1$, the weighted Sobolev space $W^{k, p}(\Omega, \mu)$ with respect to a weight $\mu\ge 0$ is the set of  functions   on $\Omega$ whose weak derivatives up to order $k$ exist and belong  to $L^p(\Omega, \mu)$. The corresponding  weighted  $W^{k, p}$ norm of a function  $h\in W^{k,p}(\Omega, \mu)$ is $$ \|h\|_{W^{k,p}(\Omega, \mu)}: = \left(\sum_{l=0}^k\int_\Omega |\nabla_z^lh(z)|^p\mu(z)dV_z\right)^\frac{1}{p}<\infty. $$
 Here  $\nabla_z^l h$  represents all $l$-th order weak derivatives of $h$. When $\mu\equiv 1$, $W^{k, p}(\Omega, \mu)$ is reduced to the (unweighted) Sobolev space   $W^{k,p}(\Omega)$.  As a direct consequence of the open-end property for $A_p$ and H\"older inequality, if $\mu\in A_p, p>1$, there exists some $q>1$ such that $ W^{k,p}(\Omega, \mu)\subset W^{k,q}(\Omega)$. 
 
 In the rest of the paper, for each $j = 1, \ldots, n$, we   use   $\nabla^{\alpha_j}_{z_j} h$  to  specify all $\alpha_j$-th order weak derivatives of $h$ in the complex $z_j$-th direction. For a multi-index $\alpha =(\alpha_1, \ldots, \alpha_n)$,  denote $ \nabla_{z_1}^{\alpha_1}\cdots \nabla_{z_n}^{\alpha_n}$ by $\nabla^\alpha_{z}$. Then for $l\in \mathbb Z^+$, $\nabla_z^{l} = \sum_{|\alpha|=l} \nabla^\alpha_{z} $.  We also represent the $\alpha_j$-th order derivative of $h$ with respect to the holomorphic $z_j$ and anti-holomorphic $\bar z_j$  variable by  $\partial^{\alpha_j}_{z_j} h$ and  $\bar\partial^{\alpha_j}_{z_j} h$, respectively.  When the context  is clear,  the letter $z$ may be dropped from those differential operators and we write instead $\nabla^{l},  \nabla^{\alpha_j}_j,  \nabla^\alpha, \partial^{\alpha_j}_{ j}$ and $  \bar\partial^{\alpha_j}_{j}$ etc.

\subsection{Weighted Sobolev estimates on planar domains}
Let $D$ be a   bounded  domain in $\mathbb C$ with Lipschitz boundary.   For $p>1, z\in D$, define
\begin{equation*} 
\begin{split}
 Th(z)&: =\frac{-1}{2\pi i}\int_D \frac{h(\zeta)}{\zeta- z}d\bar{\zeta}\wedge d\zeta, \ \ \text{for}\ \ h\in  L^p(D);\\
 Sh(z)&: =\frac{1}{2\pi i}\int_{bD}\frac{h(\zeta)}{\zeta- z}d\zeta, \ \ \text{for}\ \ h\in  L^p(bD). \end{split}
\end{equation*}
 Clearly, $ d\bar{\zeta}\wedge d\zeta = 2idV_\zeta$ in the above. $T$ and $S$ satisfy the  Cauchy-Green formula below: for any $h\in W^{1, p}(D), p>1$,
 $$ h  = Sh + T\bar\partial h\ \ \text{on} \ \ D $$
 in the sense of distributions.

The following weighted Sobolev regularity of $T$ and $S$ is essential in order to carry out the weighted Sobolev regularity of $\bar\partial$ on product domains.  It is worthwhile to note that \eqref{So} below fails if $k=0$, where  $S$ is not even well-defined. 



\begin{theorem}\cite{PZ2}\label{mainT}
Let $D\subset \mathbb C$ be a  bounded domain with $C^{k, 1}$ boundary and $\mu\in A_p, 1<p<\infty$.  For  $k\in \mathbb Z^+\cup\{0\}$, there exists a constant $C$ dependent only on $D, k$, $p$ and $  A_p(\mu)$, such that for all $h\in W^{k, p}(D, \mu)$,  
\begin{equation}\label{To}
      \|T h\|_{W^{k+1, p}(D, \mu)}\le C \|h\|_{W^{k, p}(D, \mu)}.
      \end{equation}
      If in addition $k\in \mathbb Z^+ $, then 
      \begin{equation}\label{So}
              \|S h\|_{W^{k, p}(D, \mu)}\le C \|h\|_{W^{k, p}(D, \mu)}.
      \end{equation}
\end{theorem}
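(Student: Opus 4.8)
The plan is to reduce everything to the known one-dimensional mapping properties of the Cauchy transform and to boundary layer potentials, and then to upgrade them to the weighted $A_p$ setting by combining the classical Calderón--Zygmund machinery with the weighted boundedness of singular integrals. First I would treat \eqref{To}. The key differential identity is $\partial_z (Th) = -h + S(h|_{bD})$ on $D$ together with $\bar\partial_z(Th)=h$ (the Cauchy--Green reproduction), so that the first-order derivatives of $Th$ are controlled by $h$ itself plus a boundary term; here already one sees why $k=0$ is special, since $S(h|_{bD})$ requires a trace of $h$ that only exists once $h\in W^{1,p}$. For the tangential and higher derivatives I would use the standard device of differentiating under the integral sign after an integration by parts that moves derivatives from the kernel onto $h$, picking up curvature terms of $bD$ that are controlled because $bD$ is $C^{k,1}$; this expresses $\nabla^{k+1}(Th)$ as a finite sum of a weakly singular integral of $\nabla^k h$ (which is even bounded on $L^p(\mu)$ for any $A_p$ weight, being dominated by the fractional integral / maximal operator) plus Beurling--Ahlfors-type Calderón--Zygmund operators applied to $\nabla^k h$. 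The weighted $L^p$ bound for the latter is exactly the classical fact that the Beurling transform is bounded on $L^p(\mathbb C,\mu)$ for $\mu\in A_p$, and the curvature/boundary terms are handled by the same weighted estimate applied on a neighborhood of $bD$ after a $C^{k,1}$ change of coordinates flattening the boundary.

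For \eqref{So} with $k\ge 1$, I would localize near $bD$ with a partition of unity and flatten the boundary by a $C^{k,1}$ diffeomorphism, reducing $S$ to (a perturbation of) the Cauchy integral over a graph, i.e. essentially the Hilbert transform on the boundary composed with a harmonic extension. The $W^{k,p}(\mu)$ estimate then follows from: (i) the weighted $L^p$ boundedness of the Hilbert transform and of the Cauchy integral on Lipschitz (here $C^{k,1}$, hence better) curves, which is classical for $A_p$ weights; and (ii) the commutation of tangential derivatives along $bD$ with $S$ up to lower-order terms involving the $C^{k,1}$ curvature of $bD$, together with the fact that $\partial_z(Sh)$ and $\bar\partial_z(Sh)$ are again of the same type. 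Since $Sh$ is holomorphic on $D$, once one controls all tangential derivatives of its boundary values one controls all derivatives inside by the holomorphy, so the estimate is really a weighted $W^{k,p}$ trace/extension statement for holomorphic functions, reduced to the half-plane model.

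The main obstacle I expect is the \emph{weighted} control of the boundary terms at all orders: one must show that the curvature corrections produced by repeatedly integrating by parts against the kernel $\tfrac{1}{\zeta-z}$ are not merely bounded but bounded with a constant depending only on $A_p(\mu)$ (and on $\|bD\|_{C^{k,1}}$, $k$, $p$), uniformly, and that flattening the boundary does not destroy the $A_p$ character of the weight in an uncontrolled way. This is where the hypothesis $\mu\in A_p$ (rather than $A_p^*$) and the $C^{k,1}$ regularity of $bD$ are both used essentially, and it is presumably the content that \cite{PZ2} establishes in detail; in this paper I would simply invoke it. A secondary point requiring care is that \eqref{To} is claimed for $k=0$ as well, so the $k=0$ case of \eqref{To} must be argued directly from the weighted boundedness of the Beurling transform and the (weakly singular, hence harmless) solid Cauchy kernel, without any reference to $S$.
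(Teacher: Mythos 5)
The theorem you are asked to prove is not proved in this paper at all: it is imported verbatim from the companion reference \cite{PZ2} (``Weighted Sobolev estimates of the truncated Beurling operator''), and the authors simply invoke it. So there is no ``paper's own proof'' to compare your sketch against, and you are correct at the end of your proposal that the right move within \emph{this} paper is to cite the result.

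As a roadmap for what \cite{PZ2} presumably does, your outline is sound in its essentials: for \eqref{To} the only delicate first-order derivative is $\partial_z(Th)$, which is the principal-value Beurling transform truncated to $D$, bounded on $L^p(D,\mu)$ for $\mu\in A_p$ by classical weighted Calder\'on--Zygmund theory; higher derivatives are moved from the kernel onto $h$ by integration by parts, producing lower-order solid terms plus boundary terms of $S$-type; and \eqref{So} is handled by localization, flattening $bD$ by a $C^{k,1}$ chart, and the weighted $L^p$-boundedness of the Cauchy/Hilbert transform on the flattened boundary. You also correctly identify why $k=0$ fails for \eqref{So} (no trace of $h\in L^p$) while \eqref{To} at $k=0$ is just the truncated Beurling operator. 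One concrete slip: the differential identity you write, $\partial_z(Th)=-h+S(h|_{bD})$, is not correct. It is $\bar\partial_z(Th)=h$ that holds, while $\partial_z(Th)$ is a genuine singular integral (the Beurling transform of $h\chi_D$), not $-h$ plus a boundary term; the boundary integrals of Cauchy type only appear once you integrate by parts to transfer $\partial_\zeta$ onto $h$, and they carry $d\bar\zeta$ rather than $d\zeta$, so they are not literally $S$. This does not affect the viability of the overall strategy, but as stated the identity would not survive a direct check (for instance, applying $\bar\partial_z$ to both sides gives an immediate contradiction unless $\partial_zh+\bar\partial_zh=0$).
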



\subsection{Product domains and the Hartogs triangle}
 A subset $\Omega\subset  \mathbb C^n$ is said  to be a product domain,   if    $\Omega = D_1\times\cdots\times D_n$, where each $D_j\subset \mathbb C, j=1, \ldots, n, $ is a  bounded domain in $\mathbb C$ such that  its boundary $bD_j$ consists of a finite number of  rectifiable Jordan curves which do not intersect one another. A product domain $\Omega$ is always  pseudoconvex,  and has   Lipschitz boundary if in addition each $bD_j$ is Lipschitz, $j=1, \ldots, n$. 
 
 Denote by $\triangle$ the unit disc in $\mathbb C$, and by  $\triangle^*: =\triangle\setminus \{0\}$    the punctured disc on $\mathbb C$. Then the punctured bidisc $ \triangle\times \triangle^*$ is biholomorphic to the Hartogs triangle $\mathbb H$ through the map  $\psi:  \triangle\times \triangle^* \rightarrow \mathbb H$, where \begin{equation}\label{psi}
    (w_1, w_2)\in \triangle\times \triangle^* \mapsto (z_1, z_2)= \psi(w)= (w_1w_2, w_2)\in \mathbb H.
\end{equation}  The inverse $\phi:  \mathbb H \rightarrow \triangle\times \triangle^*$ is given by \begin{equation}\label{phi}
    (z_1, z_2)\in \mathbb H  \mapsto (w_1, w_2) = \phi(z) = \left(\frac{z_1}{z_2}, z_2\right)\in \triangle\times \triangle^*.
\end{equation}
Note that $\mathbb H$ is not Lipschtiz near $(0,0)$.

It is well-known that any  domain with Lipschtiz boundary is a  uniform domain  (see \cite{GO} for the definition).  Recently, it was shown in  \cite[Theorem 2.12]{BFLS} that the Hartogs triangle  is also  a uniform domain. Thus according to  \cite{Jo}\cite[Theorem 1.1]{Ch}, both Lipschitz product domains and the Hartogs triangle   satisfy a weighted Sobolev extension property. Namely,  let $\Omega$ be either a  Lipschitz product domain or the Hartogs triangle. Then  for any weight $\mu\in A_p, 1< p<\infty, k\in \mathbb Z^+ $,  any  $h\in W^{k, p}(\Omega, \mu) $ can be extended as an element $\tilde h$ in $W^{k, p}(\mathbb C^n, \mu) $ such that 
  $$ \|\tilde h\|_{W^{k, p}(\mathbb C^n, \mu)}\le C\|h\|_{W^{k, p}(\Omega, \mu)}  $$
for some  constant $C$ dependent only on $k, p$ and the $A_p$ constant of $\mu$.  

For  simplicity of notations, throughout the rest of the paper, we shall say the two quantities $a$ and $b$ satisfy $a\lesssim b$, if  $a\le Cb$ for some  constant $C>0$ dependent only possibly on $\Omega, k, p$ and the $A_p^*$ constant $  A_p^*(\mu)$ (or $A_p(\mu)$).

\section{Weighted Sobolev estimates on product domains  }

Let $D_j\subset\mathbb C$, $j= 1, \ldots, n,$ be bounded domains with  $C^{k, 1}$ boundary,   $n\ge 2, k\in \mathbb Z^+\cup \{0\}$, and let $\Omega: = D_1\times\cdots\times D_n$. Denote by $T_j$ and $S_j$ the solid and boundary Cauchy integral operators $T$ and $S$ acting on functions along the $j$-th slice of $\Omega$, respectively. Namely, for $p>1, z\in \Omega$,
\begin{equation}\label{tj}
    \begin{split}
        &T_j h (z): =  \frac{-1}{2\pi i}\int_{D_j} \frac{h(z_1, \ldots, z_{j-1}, \zeta, z_{j+1}, \ldots, z_n)}{\zeta- z_j}d\bar{\zeta}\wedge d\zeta, \ \ \text{for}\ \ h\in  L^p(\Omega);\\
& S_j h (z): =  \frac{ 1}{2\pi i}\int_{bD_j} \frac{h(z_1, \ldots, z_{j-1}, \zeta, z_{j+1}, \ldots, z_n)}{\zeta- z_j}d\zeta, \ \ \text{for}\ \ h\in  L^p(b\Omega).
    \end{split}
\end{equation}

\begin{pro}\label{Tj}
Let $\Omega = D_1\times\cdots\times D_n$, where each $D_j  $  is a bounded domain in $\mathbb C$ with $C^{k, 1}$ boundary, $ k\in\mathbb Z^+\cup\{0\}$. Assume   $\mu\in A_p^*, 1<p<\infty$. Then for any $h\in W^{k, p}(\Omega, \mu)$,  
\begin{equation}\label{T_j}
          \|T_jh\|_{W^{k, p}(\Omega, \mu) }\lesssim \|h\|_{W^{k, p}(\Omega, \mu)}.\end{equation}
If in addition $k\in \mathbb Z^+$, then           \begin{equation}\label{S_j}
        \|S_jh\|_{W^{k-1, p}(\Omega, \mu) }\lesssim \|h\|_{W^{k, p}(\Omega, \mu)}.
    \end{equation}
\end{pro}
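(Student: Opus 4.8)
The plan is to reduce the multi-variable estimates \eqref{T_j} and \eqref{S_j} to the one-variable results \eqref{To} and \eqref{So} of Theorem \ref{mainT}, exploiting the fact that $T_j$ and $S_j$ act only in the $z_j$ slice while the remaining variables $\hat z_j$ play the role of harmless parameters. First I would fix $j$ and a multi-index $\alpha=(\alpha_1,\dots,\alpha_n)$ with $|\alpha|\le k$, and try to commute $\nabla^\alpha_z$ past $T_j$. The derivatives $\nabla^{\alpha_i}_{z_i}$ for $i\neq j$ pass directly under the integral sign in \eqref{tj} (differentiation in a parameter), so that $\nabla^{\hat\alpha_j}_{\hat z_j} T_j h = T_j \big(\nabla^{\hat\alpha_j}_{\hat z_j} h\big)$, where $\hat\alpha_j$ collects the components of $\alpha$ other than the $j$-th; the derivatives $\nabla^{\alpha_j}_{z_j}$ in the singular direction are then handled by the one-variable statement. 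Concretely, for fixed $\hat z_j$ the function $\zeta\mapsto h(\dots,\zeta,\dots)$ lies in $W^{k,p}(D_j,\mu(\cdot,\hat z_j))$ for a.e.\ $\hat z_j$ (by Fubini and the definition of $W^{k,p}(\Omega,\mu)$), and $\mu(\cdot,\hat z_j)\in A_p$ with constant $\le A_p^*(\mu)$ uniformly in $\hat z_j$ by Definition \ref{aps}; so \eqref{To} gives, slicewise,
\begin{equation*}
\big\|T_j h(\cdot,\hat z_j)\big\|_{W^{k+1,p}(D_j,\,\mu(\cdot,\hat z_j))}\lesssim \big\|h(\cdot,\hat z_j)\big\|_{W^{k,p}(D_j,\,\mu(\cdot,\hat z_j))},
\end{equation*}
with the implied constant independent of $\hat z_j$.

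Next I would assemble the slicewise bounds into the global one. For a pure $z_j$-derivative of order $\alpha_j\le k$ of $T_j h$ one uses the slice estimate above directly (even gaining one order in $z_j$, which is more than needed); for a mixed derivative $\nabla^\alpha_z T_j h$ one first moves the $\hat z_j$-derivatives inside to hit $h$, obtaining $T_j(\nabla^{\hat\alpha_j}_{\hat z_j}h)$, and then applies the slice estimate in the $z_j$ variable to that function. Raising to the $p$-th power, integrating $dV_{\hat z_j}$ over $\prod_{i\neq j}D_i$, using Fubini, and summing over all $\alpha$ with $|\alpha|\le k$ yields \eqref{T_j}. The boundary operator \eqref{S_j} is handled identically: $\hat z_j$-derivatives pass under the boundary integral, and for each fixed $\hat z_j$ the trace $h(\cdot,\hat z_j)|_{bD_j}$ makes sense because $h(\cdot,\hat z_j)\in W^{k,p}(D_j)$ with $k\ge1$, so \eqref{So} applies slicewise to give $\|S_j h(\cdot,\hat z_j)\|_{W^{k-1,p}(D_j,\mu(\cdot,\hat z_j))}\lesssim \|h(\cdot,\hat z_j)\|_{W^{k,p}(D_j,\mu(\cdot,\hat z_j))}$, after which the same Fubini assembly gives \eqref{S_j}. (Note the loss of one order here is exactly the one built into Theorem \ref{mainT}, reflecting that $S$ is undefined at $k=0$.)

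I expect the main technical point to be the careful justification of the commutation $\nabla^{\hat\alpha_j}_{\hat z_j}T_j h = T_j(\nabla^{\hat\alpha_j}_{\hat z_j}h)$ as an identity of weak derivatives on $\Omega$ — i.e., differentiation under the integral sign in the non-singular variables when $h$ is only Sobolev, not smooth. This is most cleanly done by the Sobolev extension property recorded at the end of Section 2 (extend $h$ to $\tilde h\in W^{k,p}(\mathbb C^n,\mu)$), approximating $\tilde h$ by smooth functions in the $A_p$-weighted norm so that the identity holds classically and then passes to the limit; one must check that the kernel $1/(\zeta-z_j)$, being independent of $\hat z_j$ and locally integrable in $\zeta$, causes no difficulty, and that the slicewise $W^{k,p}(D_j,\mu(\cdot,\hat z_j))$ norms are measurable functions of $\hat z_j$ so Fubini is legitimate. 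The remaining bookkeeping — counting which derivatives land on $h$ versus get absorbed by the gain in \eqref{To}, and the uniformity of constants in $\hat z_j$ — is routine once the $A_p^*$ definition is invoked.
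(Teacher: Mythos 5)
Your proposal is correct and follows essentially the same slicewise route as the paper: commute the $\hat z_j$-derivatives past $T_j$ (resp.\ $S_j$), apply the one-variable estimate \eqref{To} (resp.\ \eqref{So}) of Theorem \ref{mainT} on each slice $z_j\mapsto(\cdot,\hat z_j)$ using the uniform $A_p$ bound supplied by $\mu\in A_p^*$, and integrate via Fubini. The only cosmetic difference is that the paper additionally uses $\bar\partial_j T_j=\mathrm{id}$ to reduce to pure holomorphic $z_j$-derivatives, and records explicitly that in the corner case $\alpha=(0,k-1)$ for $S_j$ one order of $z_j$-derivative must be spent (the sum up to $\alpha_1+1$) because \eqref{So} is unavailable at $k=0$; your argument handles both points implicitly.
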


\begin{proof}
Without loss of generality, assume $j=1$ and $n=2$.  
For any multi-index $\alpha= (\alpha_1,  \alpha_2)$ with $|\alpha|\le  k$, since $\bar\partial_1 T_1 = id$,  we can further assume $\nabla^\alpha  T_1h = \partial_1^{\alpha_1} T_1 \left(\nabla_2^{\alpha_2}h\right)$. For a.e. fixed $z_2\in D_2$, $\mu(\cdot, z_2)\in A_p$ and $ \nabla_2^{
\alpha_2}h(\cdot, z_2)\in W^{\alpha_1, p}(D_1, \mu(\cdot, z_2)) $. Making use of \eqref{To}, we have
$$\int_{D_1}|\partial_1^{\alpha_1} T_1 \left(\nabla_2^{\alpha_2}h\right)(z_1, z_2) |^p\mu(z_1, z_2)dV_{z_1}\lesssim \sum_{l=0}^{\alpha_1} \int_{D_1}|\nabla_1^{l}\nabla_2^{\alpha_2}h(z_1, z_2) |^p\mu(z_1, z_2)dV_{z_1}. $$
Thus \begin{equation*}
    \begin{split}
 \|  \nabla^\alpha  T_1h\|^p_{ L^{p}(\Omega, \mu)} = &\int_{D_2}  \int_{D_1}|\partial_1^{\alpha_1} T_1 \left(\nabla_2^{\alpha_2}h\right)(z_1, z_2) |^p\mu(z_1, z_2)dV_{z_1}dV_{z_2}   \lesssim \|  h\|^p_{ W^{k, p}(\Omega, \mu)}.
    \end{split}
\end{equation*}

The boundedness of $S_1$ is proved similarly. Since $S_1h$ is holomorphic with respect to the $z_1$ variable, we only  consider $ \nabla^{\alpha} S_1h(z) = \partial_1^{ \alpha_1} S_1 (\nabla_2^{ \alpha_2} h)$ with $|\alpha|\le k-1$. Then $\nabla_2^{ \alpha_2} h(\cdot, z_2)\in W^{k- \alpha_2, p}(D_1)$ for a.e. $z_2\in D_2$. Noting that $k-\alpha_2\ge 1$, by  \eqref{So},
\begin{equation*} 
    \int_{D_1}|\partial_1^{\alpha_1} S_1 \left(\nabla_2^{\alpha_2}h\right)(z_1, z_2) |^p\mu(z_1, z_2)dV_{z_1}\lesssim   \sum_{l=0}^{\alpha_1+1}\int_{D_1}|\nabla_1^{l}\nabla_2^{\alpha_2}h(z_1, z_2) |^p\mu(z_1, z_2)dV_{z_1}. 
\end{equation*}
Here the sum for $l$ up to $\alpha_1+1$ is necessary in the case when  $ \alpha =(0, k-1)$, due to the absence of \eqref{So} at $k=0$ there. Hence $\|\nabla^{\alpha} S_1 h\|_{L^p(\Omega, \mu)}\lesssim \|h\|_{W^{k, p}(\Omega, \mu)}$.  

\end{proof}
\medskip

\begin{remark}\label{re}
a). The estimate  \eqref{T_j} is optimal. Indeed, consider $h(z_1, z_2)= |z_2|^{k-\frac{2}{p}}$ on ${\triangle\times \triangle}$. Then $h\in W^{k, s }({\triangle\times \triangle})$ for all $s<p$. However,  $T_1h(z_1, z_2) = \bar z_1|z_2|^{k-\frac{2}{p}} \notin W^{k, p}({\triangle\times \triangle})$.  \\
b). As a consequence of Theorem \ref{mainT},   one also has when $k\in \mathbb Z^+, 1<p<\infty, j=1, \ldots, n$,
\begin{equation}\label{T11}
               \sum_{l=0}^k\|\nabla_j^l T_jh\|_{L^{p}(\Omega, \mu) }\lesssim \sum_{l=0}^{k-1}\|\nabla_j^lh\|_{L^{ p}(\Omega, \mu)} \lesssim  \|h\|_{W^{k-1, p}(\Omega, \mu)},
      \end{equation}  
\begin{equation}\label{T112}
   \sum_{l=0}^k  \|\nabla_j^l T_jh\|_{W^{1, p}(\Omega, \mu) }\lesssim \|h\|_{W^{k, p}(\Omega, \mu)},
\end{equation}
and 
\begin{equation}\label{S11}
     \sum_{l=0}^k\|\nabla_j^l S_jh\|_{L^{p}(\Omega, \mu) }\lesssim \sum_{l=0}^{k}\|\nabla_j^lh\|_{L^{ p}(\Omega, \mu)} \lesssim \|h\|_{W^{k, p}(\Omega, \mu)}. 
\end{equation}
In the case when $\mu\equiv 1$ and $k=0$, an application of the classical complex analysis theory (see \cite{V} etc.) and Fubini theorem gives  for $1\le p< \infty$, \begin{equation}\label{T12}
 \|T_jh \|_{L^{p}(\Omega ) } \lesssim \|h\|_{L^{p}(\Omega)}. \ \ \
\end{equation} 
These inequalities will be used later.
\end{remark}
\medskip

Given a $(0, q)$ form \begin{equation*}
     \mathbf f = \sum_{\substack{ j_1<\cdots<j_q}}f_{\bar j_1\cdots\bar j_q}  d\bar z_{j_1}\wedge\cdots \wedge d\bar z_{j_q}\in C^1(\bar{\Omega}),
 \end{equation*}
define  $T_j \mathbf f$ and $S_j\mathbf f$ to be  the action on the corresponding  component functions. Namely,
\begin{equation*}
    \begin{split}
       & T_j\mathbf f: = \sum_{ \substack{ 1\le j_1<\cdots<j_q\le n}}T_jf_{ \bar j_1\cdots\bar j_q}  d\bar z_{j_1}\wedge\cdots \wedge d\bar z_{j_q};\\
       & S_j\mathbf f: = \sum_{\substack{ \\1\le j_1<\cdots<j_q\le n}}S_jf_{ \bar j_1\cdots\bar j_q}  d\bar z_{j_1}\wedge\cdots \wedge d\bar z_{j_q}.
    \end{split}
\end{equation*}
Furthermore,    define a projection  $\pi_k\mathbf f$  to be a $(0, q-1)$ form with
\begin{equation*}
    \pi_k\mathbf f: =  \sum_{\substack{  1\le k<j_2<\cdots<j_q\le n}}f_{ \bar k\bar j_2\cdots\bar j_q}  d\bar z_{j_2}\wedge\cdots \wedge d\bar z_{j_q}. 
\end{equation*}
 In their celebrated work \cite[pp. 430]{NW}, Nijenhuis and Woolf constructed  a solution operator of the $\bar\partial$ equation for $(0,q)$ forms on product domains. 

\begin{theorem}\cite{NW}\label{nw1}
Let $\Omega = D_1\times\cdots\times D_n$, where each $D_j  $  is a bounded domain in $\mathbb C$ with $C^{k, 1}$ boundary, $ k\in\mathbb Z^+$. If $\mathbf f\in  C^{1}(\bar{\Omega})$ is a $\bar\partial$-closed $(0, q)$ form on $\Omega$, then 
\begin{equation}\label{key}
  T\mathbf f: = 
  T_1\pi_1 \mathbf f +T_2S_1\pi_2 \mathbf f+\cdots+ T_nS_1\cdots S_{n-1}\pi_n\mathbf f
\end{equation}
is a solution to $\bar\partial   u = \mathbf f$ on $\Omega$. 
\end{theorem}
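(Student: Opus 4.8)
The plan is to verify that the operator $T$ defined in \eqref{key} actually produces a solution to $\bar\partial u = \mathbf f$ by a direct computation, exploiting the defining identities $\bar\partial_j T_j = \mathrm{id}$ (from the Cauchy--Green formula applied in a single slice), $\bar\partial_j S_j = 0$ (the Cauchy integral is holomorphic in $z_j$), and the commutation relations among the operators $T_j, S_j$ with $j$ running over different slices. First I would record the basic slice-wise identities: for a function $h$ with enough regularity, $\bar\partial_j T_j h = h$ and $\bar\partial_i T_j h = T_j \bar\partial_i h$ for $i\neq j$, while $\bar\partial_j S_j h = 0$ and $\bar\partial_i S_j h = S_j \bar\partial_i h$ for $i\neq j$. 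These follow by differentiating under the integral sign together with the Cauchy--Green formula stated just before Theorem \ref{mainT}, and they extend from smooth $h$ to the regularity class at hand by the density/estimate results in Theorem \ref{mainT} and Proposition \ref{Tj}.

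Next I would apply $\bar\partial$ to $T\mathbf f = \sum_{\ell=1}^n T_\ell S_1\cdots S_{\ell-1}\pi_\ell \mathbf f$ term by term. Writing $\bar\partial = \sum_{i=1}^n d\bar z_i \wedge \bar\partial_i$, and using that $S_1,\dots,S_{\ell-1}$ kill $\bar\partial_1,\dots,\bar\partial_{\ell-1}$ while $T_\ell$ inverts $\bar\partial_\ell$, the $\ell$-th summand contributes $d\bar z_\ell \wedge S_1\cdots S_{\ell-1}\pi_\ell \mathbf f$ (the ``good'' term, reconstructing a piece of $\mathbf f$) plus error terms of the form $d\bar z_i \wedge T_\ell S_1\cdots S_{\ell-1}\pi_\ell(\bar\partial_i \mathbf f)$ for $i>\ell$ (using $\bar\partial$-closedness of $\mathbf f$ to express $\bar\partial_i$ of a component in terms of components with a $\bar z_i$ slot, combined with the commutation relations). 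The heart of the argument is the telescoping/bookkeeping: one shows, by induction on $n$ (or on $q$), that the accumulated error terms from the $\ell$-th stage are exactly cancelled by the ``good'' reconstruction terms coming from stages $\ell+1, \dots, n$, so that summing over $\ell$ leaves precisely $\sum_{i} d\bar z_i \wedge (\text{the } d\bar z_i\text{-components of } \mathbf f) = \mathbf f$. This is the classical Nijenhuis--Woolf/Dolbeault homotopy identity, and I would cite \cite{NW} for the combinatorial identity while indicating the inductive step.

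I expect the main obstacle to be purely bookkeeping rather than analytic: one must keep careful track of signs coming from the wedge ordering $d\bar z_{j_1}\wedge\cdots\wedge d\bar z_{j_q}$ when a factor $d\bar z_i$ is inserted, and match indices between the projections $\pi_\ell$ at different stages. A clean way to organize this is to fix a $(0,q)$ form and check the coefficient of each fixed multi-index $d\bar z_{i_1}\wedge\cdots\wedge d\bar z_{i_q}$ in $\bar\partial T\mathbf f$ separately, reducing everything to the one-variable identity $\bar\partial_j T_j = \mathrm{id}$ on each relevant slice and the vanishing $\bar\partial_j S_j = 0$ on the others; the $\bar\partial$-closedness hypothesis enters exactly to guarantee the cross terms assemble correctly. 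A secondary (minor) point is justifying the manipulations at the stated regularity level $\mathbf f\in C^1(\bar\Omega)$: here differentiation under the integral is legitimate by the kernel estimates, so no approximation argument is strictly needed, though one could also first prove it for smooth $\mathbf f$ and pass to the limit using the boundedness in Proposition \ref{Tj} together with the Sobolev extension property recorded in Section 2.
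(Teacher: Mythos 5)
The paper does not actually prove this statement: Theorem~\ref{nw1} is quoted verbatim from Nijenhuis--Woolf~\cite{NW} with a page reference, so there is no in-text proof to compare against. What you have written is, in outline, the standard Nijenhuis--Woolf homotopy argument, and it is correct.

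To confirm the mechanism in the cleanest case, take $q=1$ with $\mathbf f=\sum_j f_{\bar j}\,d\bar z_j$, so $T\mathbf f=\sum_{\ell=1}^n T_\ell S_1\cdots S_{\ell-1}f_{\bar\ell}$. Then for fixed $j$: the terms with $\ell>j$ vanish under $\bar\partial_j$ because $\bar\partial_j S_j=0$; the term $\ell=j$ yields $S_1\cdots S_{j-1}f_{\bar j}$ via $\bar\partial_j T_j=\mathrm{id}$; and for $\ell<j$, $\bar\partial$-closedness $\bar\partial_j f_{\bar\ell}=\bar\partial_\ell f_{\bar j}$ together with the slicewise Cauchy--Green identity $T_\ell\bar\partial_\ell=\mathrm{id}-S_\ell$ gives a telescoping sum $\sum_{\ell<j}\bigl(S_1\cdots S_{\ell-1}f_{\bar j}-S_1\cdots S_\ell f_{\bar j}\bigr)=f_{\bar j}-S_1\cdots S_{j-1}f_{\bar j}$. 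Adding these recovers $\bar\partial_j T\mathbf f=f_{\bar j}$. This is exactly the skeleton you describe; the general-$q$ case is the same telescoping organized by the leading index of the multi-index, with signs from the wedge ordering. Two small remarks: (i) the hypothesis $\mathbf f\in C^1(\bar\Omega)$ really is what makes the direct calculation legitimate --- the Cauchy kernel is locally integrable, and the $S_i$'s with $i<\ell$ act on different slices and so preserve the $C^1$ regularity needed in the $z_\ell$- and $z_i$-slices ($i\ge\ell$), so no approximation argument is needed here; (ii) your appeal to Theorem~\ref{mainT}/Proposition~\ref{Tj} for a density passage is unnecessary in this $C^1$ setting (the paper uses those estimates instead to extend the conclusion of Theorem~\ref{nw1} from $C^1$ data to Sobolev data in the proof of Theorem~\ref{mainp}, which is a separate step). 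Apart from that conflation your sketch has the right ingredients and no genuine gap.
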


\begin{proof}[Proof of Theorem \ref{mainp}: ]  Given a  $\bar\partial$-closed $(0, q)$ form $\mathbf f\in W^{n-1,p}(\Omega, \mu), p>1$ (the  $k=1$ case in the theorem), we first verify that  $T\mathbf f$ in \eqref{key} is a weak solution to $\bar\partial u = \mathbf f$ on $\Omega$. Since $W^{n-1,p}(\Omega, \mu)\subset   W^{n-1,q}(\Omega)  $  for some $q>1$, for simplicity we directly assume $\mathbf f\in W^{n-1,p}(\Omega), p>1$.  Following an idea in \cite{PZ}, for each $j=1, \ldots, n$, let $ \{D^{(m)}_j\}_{m=1}^\infty$   be a family of strictly increasing open subsets  of $D_j$ such that\\
 a). for  $m\ge N_0\in \mathbb N$, $bD^{(m)}_j$ is  $C^{k, 1}$,  $\frac{1}{m+1}< dist(D^{(m)}_j, D_j^c)<\frac{1}{m}$;\\
 b). $H_j^{(m)}: \bar D_j\rightarrow \bar D_j^{(m)}$ is a $C^1$ diffeomorphism  with $\lim_{m\rightarrow \infty} \|H_j^{(m)}-id\|_{C^1(D_j)}=0$.

  Let $\Omega^{(m)}= D^{(m)}_1\times\cdots\times D^{(m)}_n$ be the product of those approximating  planar domains.  Denote by  $T^{(m)}_j, S^{(m)}_j$ and $T^{(m)}$ the operators defined in (\ref{tj}) and (\ref{key}) accordingly, with $\Omega$ replaced by $\Omega^{(m)}$. Then $T^{(m)} \mathbf f\in  W^{1, p}(\Omega^{(m)})$. Adopting the mollifier argument to $\mathbf f\in W^{n-1, p}(\Omega)$, we  obtain $\mathbf f^\epsilon\in C^1(\overline{\Omega^{(m)}})\cap W^{n-1, p}( \Omega^{(m)})$ such that  $$\|\mathbf f^\epsilon - \mathbf f\|_{W^{n-1, p}(\Omega^{(m)})}\rightarrow 0$$  as $\epsilon\rightarrow 0$ and  $\bar\partial \mathbf f^\epsilon =0$ on $\Omega^{(m)}$.

For each fixed $m$,  $ T^{(m)} \mathbf f^\epsilon\in W^{n-1, p}(\Omega^{(m)})$ when $\epsilon$ is small and $$\bar\partial T^{(m)} \mathbf f^\epsilon =\mathbf f^\epsilon \quad \text{in}\quad  \Omega^{(m)}$$
 by Theorem \ref{nw1}. Furthermore,   $$\|T^{(m)} \mathbf f^\epsilon - T^{(m)} \mathbf f\|_{W^{1,p}(\Omega^{(m)})} \lesssim \|\mathbf f^\epsilon - \mathbf f\|_{W^{n-1, p}(\Omega^{(m)})}\rightarrow 0$$ as $\epsilon\rightarrow 0$. In particular, $\lim_{\epsilon\rightarrow 0}T^{(m)} \mathbf f^\epsilon$ exists a.e. in $\Omega^{(m)}$
and is equal to $T^{(m)} \mathbf f \in W^{n-1, p}(\Omega^{(m)})$ pointwisely. 


Given a testing  form $\phi$ with a compact support  $K$, let $m_0\ge N_0$ be such that $K \subset \Omega^{(m_0-2)}$. 
Denote by  $\langle\cdot, \cdot\rangle_{\Omega}$ (and $\langle\cdot, \cdot\rangle_{\Omega^{(m_0)}}$) the inner product(s) in $L^2(\Omega)$ (and in $L^2({\Omega^{(m_0)}}$), respectively), and  $\bar\partial^*$ the formal adjoint of $\bar\partial$. For all $m\ge m_0$, one has
\begin{equation}\label{88}
 \langle T^{(m)}\mathbf f, \bar\partial^*\phi\rangle_{\Omega^{(m_0)}} =\lim_{\epsilon \rightarrow 0}\langle T^{(m)}\mathbf f^\epsilon, \bar\partial^*\phi\rangle_{\Omega^{(m_0)}}= \lim_{\epsilon \rightarrow 0}\langle \bar\partial T^{(m)}\mathbf f^\epsilon, \phi\rangle_{\Omega^{(m_0)}} = \lim_{\epsilon \rightarrow 0} \langle\mathbf f^\epsilon, \phi\rangle_{\Omega^{(m_0)}} = \langle\mathbf f, \phi\rangle_{\Omega}.
\end{equation}

We further show that \begin{equation}\label{99}
  \langle T\mathbf f, \bar\partial^*\phi\rangle_{\Omega}=\lim_{m\rightarrow \infty}\langle T^{(m)}\mathbf f, \bar\partial^*\phi\rangle_{\Omega^{(m_0)}}.
\end{equation}
    For simplicity, assume $\pi_j \mathbf f $ contains only one component function $f_j$, so does $\phi$. We will also drop various integral measures, which should be clear from the context. 
            For each $j=1, \ldots, n$, 
\begin{equation*}
  \begin{split}
    &\langle T_j^{(m)}S_1^{(m)}\cdots S_{j-1}^{(m)}\pi_j \mathbf f, \bar\partial^*\phi\rangle_{\Omega^{(m_0)}} \\
       = &\frac{1}{(2\pi i)^{j-1}}\int_{z\in K }T_j\left(\int_{\zeta_1\in b D_1^{(m)}}\cdots \int_{\zeta_{j-1}\in b D_{j-1}^{(m)}}\frac{f_j(\zeta_1, \cdots, \zeta_j, z_{j+1}, \cdots, z_n)\chi_{D_j^{(m)}}(\zeta_j)}{(\zeta_1-z_1)\cdots(\zeta_{j-1}-z_{j-1})}\right)\overline{\bar\partial^*\phi(z)}.
  \end{split}
\end{equation*}
Here $\chi_{D_j^{(m)}}$ is the characteristic function of $D_j^{(m)}\subset \mathbb C$. 

    For each $(z, \zeta_j)\in K\times D_j\setminus \{z_j=\zeta_j\}$, after a change of variables, there exists   some  function $h^{(m)}\in C (\bar D_1\times\cdots \bar D_{j-1})$, such that  $\|h^{(m)}-1\|_{C( D_1\times\cdots  D_{j-1} )}\rightarrow 0$ as $m\rightarrow \infty$ and 
\begin{equation*}
    \begin{split}
       &  \int_{\zeta_1\in b D_1^{(m)}}\cdots \int_{\zeta_{j-1}\in b D_{j-1}^{(m)}}\frac{f_j(\zeta_1, \cdots, \zeta_j, z_{j+1}, \cdots, z_n)\chi_{D_j^{(m)}}(\zeta_j) }{(\zeta_1-z_1)\cdots(\zeta_{j-1}-z_{j-1})} \\
       = &   \int_{\zeta_1\in b D_1 }\cdots \int_{\zeta_{j-1}\in b D_{j-1} } \frac{f_j(\zeta_1, \cdots, \zeta_j, z_{j+1}, \cdots, z_n)h^{(m)}(\zeta_1, \cdots, \zeta_{j-1})\chi_{D_j^{(m)}}(\zeta_j) }{(\zeta_1-z_1)\cdots(\zeta_{j-1}-z_{j-1})}.
    \end{split}
\end{equation*} 
   Notice that when  $z\in K(\subset \Omega^{(m_0-2)})$ and $\zeta_l\in b D_l^{(m)}, m\ge m_0, l=1, \ldots, j-1$,  $$\frac{1}{|\zeta_l-z_l|}\le \frac{1}{dist((\Omega^{(m)})^c, \Omega^{(m_0-2)})}\le\frac{1}{ dist((\Omega^{(m_0)})^c, \Omega^{(m_0-2)})}<  m_0^2 .$$
Hence  
\begin{equation}\label{00}
    \begin{split}
&\left|\langle T_j^{(m)}S_1^{(m)}\cdots S_{j-1}^{(m)}\pi_j \mathbf f, \bar\partial^*\phi\rangle_{\Omega^{(m_0)}} - \langle T_j S_1 \cdots S_{j-1} \pi_j \mathbf f, \bar\partial^*\phi\rangle_{\Omega^{(m_0)}} \right|      \\
\lesssim &\left\| T_j\left(\int_{  bD_1\times \cdots \times bD_{j-1}} \left|f_jh^{(m)} \chi_{ D_j^{(m)}}-f_j\right|\right) \right\|_{L^1(\Omega)}.
    \end{split}
\end{equation}
On the other hand,
\begin{equation}\label{9}
    \begin{split}
       & \left\|\int_{  bD_1\times \cdots \times bD_{j-1}} \left|f_jh^{(m)} \chi_{ D_j^{(m)}}-f_j\right|\right\|_{L^1(\Omega)}\\
       \lesssim &\left\| |f_j|\left|h^{(m)}\chi_{ D_j^{(m)}}-1\right|\right\|_{L^1(bD_1\times \cdots \times bD_{j-1}\times D_j\times\cdots\times D_n) }  \\
       \lesssim &\|f_j\|_{L^1(bD_1\times \cdots \times bD_{j-1}\times D_j\times\cdots\times D_n)} \|h^{(m)} -1\|_{C( D_1\times \cdots \times  D_{j-1} ) }\\
       &+\|f_j\|_{L^p(bD_1\times \cdots \times bD_{j-1}\times D_j\times\cdots\times D_n)}\text{vol}^{1-\frac{1}{p}}(D_j\setminus D_j^{(m)})  \\
        \lesssim &\|f_j\|_{W^{n-1, p}(\Omega)}\left(\|h^{(m)} -1\|_{C( D_1\times \cdots \times  D_{j-1} ) } +\text{vol}^{1-\frac{1}{p}}(D_j\setminus D_j^{(m)})\right) \rightarrow 0
    \end{split}
\end{equation} 
   as $m\rightarrow \infty$. Here we used the trace theorem in the third inequality. Combining \eqref{T12},   \eqref{00} and \eqref{9} we finally get   
   $$ \left|\left\langle T_j^{(m)}S_1^{(m)}\cdots S_{j-1}^{(m)}\pi_j \mathbf f -  T_j S_1 \cdots S_{j-1} \pi_j \mathbf f, \bar\partial^*\phi\right\rangle_{\Omega^{(m_0)}} \right|    \rightarrow 0 $$ 
 as $m\rightarrow \infty$. (\ref{99}) is  thus proved. Combining  (\ref{88}) with (\ref{99}),  we deduce that
\begin{equation*}
    \langle T\mathbf f, \bar\partial^*\phi\rangle_{\Omega}=\lim_{m\rightarrow \infty}\langle T^{(m)}\mathbf f, \bar\partial^*\phi\rangle_{\Omega^{(m_0)}} =\langle\mathbf f, \phi\rangle_\Omega,
\end{equation*}
which verifies   $T\mathbf f$ as a weak solution to $\bar\partial$ on $\Omega$.

We next prove the weighted Sobolev estimate for the operator $T$ defined in \eqref{key}. Since $\bar\partial T\mathbf f = \mathbf f$, we can further assume $\nabla^k=\partial^{\alpha} $   for any multi-index $\alpha= (\alpha_1, \ldots, \alpha_n)$ with $|\alpha|\le k$.  In view of   \eqref{key}   and the fact that $\pi_j$ being a projection is automatically  bounded in $W^{k, p}(\Omega, \mu)$, we only need to  estimate $\|\partial^\alpha T_n S_1\cdots S_{n-1} h\|_{L^{p}(\Omega, \mu)}$ in terms of $\|h\|_{W^{k+n-2, p}(\Omega, \mu)}$. Write   $  \partial^\alpha T_n S_1\cdots S_{n-1} h = (\partial_n^{\alpha_n} T_n) (\partial_1^{\alpha_1} S_1)\cdots (\partial_{n-1}^{\alpha_{n-1}} S_{n-1})   h  $. If $\alpha_n\ge 1$, we apply \eqref{T11} and    \eqref{S_j} inductively to have \begin{equation*}
    \begin{split}
        \| \partial^\alpha T_n S_1\cdots S_{n-1} h\|_{L^{p}(\Omega, \mu)} \lesssim  & \|  (\partial_1^{\alpha_1} S_1)\cdots (\partial_{n-1}^{\alpha_{n-1}} S_{n-1})    h\|_{W^{\alpha_n-1, p}(\Omega, \mu)}  \\
         \lesssim  & \|(\partial_2^{\alpha_2} S_2)   \cdots (\partial_{n-1}^{\alpha_{n-1}} S_{n-1})    h\|_{W^{\alpha_n-1+\alpha_1+1, p}(\Omega, \mu)}  \\
         \lesssim &\cdots\\
        \lesssim & \|h\|_{W^{\sum_{j=1}^n\alpha_j+n-2, p}(\Omega, \mu) } \le\|h\|_{W^{k+n-2, p}(\Omega, \mu) }. 
    \end{split}
\end{equation*}
If $\alpha_n=0$, then there exists some $1\le j\le n-1$, such that $\alpha_j\ge 1$. Without loss of generality, assume $\alpha_1\ge 1$. Then by \eqref{T11}, \eqref{S11} and \eqref{S_j} inductively,
\begin{equation*}
    \begin{split}
        \| \partial^\alpha T_n S_1\cdots S_{n-1} h\|_{L^{p}(\Omega, \mu)} \lesssim  & \|  (\partial_1^{\alpha_1} S_1)\cdots (\partial_{n-1}^{\alpha_{n-1}} S_{n-1})    h\|_{L^{ p}(\Omega, \mu)}  \\
         \lesssim  & \|(\partial_2^{\alpha_2} S_2)   \cdots (\partial_{n-1}^{\alpha_{n-1}} S_{n-1})    h\|_{W^{ \alpha_1, p}(\Omega, \mu)}  \\
         \lesssim  & \|(\partial_3^{\alpha_3} S_3)   \cdots (\partial_{n-1}^{\alpha_{n-1}} S_{n-1})    h\|_{W^{ \alpha_1 +\alpha_2+1, p}(\Omega, \mu)}  \\
         \lesssim &\cdots\\
        \lesssim & \|h\|_{W^{k+n-2, p}(\Omega, \mu) }. 
    \end{split}
\end{equation*}
The theorem is thus proved. 

\end{proof}



Similar to an example in \cite{Zhang2}, the following one shows that the $\bar\partial$ problem does not improve regularity in  weighted Sobolev spaces on product domains. As such the weighted Sobolev regularity obtained in Theorem  \ref{mainp} is optimal when $n=2$.

\begin{example}\label{ex2}
  For each  $k\in \mathbb Z^+, 1<p<\infty, \epsilon>0$ and any $s\in \left(\frac{2}{1+\epsilon}, 2\right)\setminus\{1\}$, consider $\mathbf f= (z_2-1)^{k-s}d\bar z_1 $ on ${\triangle\times \triangle}$, $\frac{1}{2}\pi <\arg (z_2-1)<\frac{3}{2}\pi$ and $\mu =|z_2-1|^{s(p-1)}$. Then $\mu\in A_p^*$, $\mathbf f\in W^{k, p}({\triangle\times \triangle}, \mu )$  and   is  $\bar\partial$-closed on ${\triangle\times \triangle}$. However, there does not exist a solution $u\in W^{k, p+\epsilon}({\triangle\times \triangle}, \mu)$ to $\bar\partial u =\mathbf f$ on ${\triangle\times \triangle}$.  \end{example}

\begin{proof}One can directly verify that $\mathbf f\in W^{k, p}({\triangle\times \triangle}, \mu) $ is $\bar\partial$-closed on ${\triangle\times \triangle}$ and  $\mu\in A_p^*$.  

Suppose there exists some $u\in W^{k, p+\epsilon}({\triangle\times \triangle}, \mu)$ satisfying $\bar\partial u =\mathbf f $ on ${\triangle\times \triangle}$. Then there exists some holomorphic function $h$ on ${\triangle\times \triangle} $, such that $u = (z_2-1)^{k-s}\bar z_1+h \in W^{k, p+\epsilon}({\triangle\times \triangle}, \mu)$. 
For each $(r, z_2) \in U: = (0,1) \times \triangle\subset \mathbb R^3$, consider  
    $$v(r, z_2): =\int_{|z_1|= r} {u}(z_1, z_2) dz_1. $$
   By H\"older inequality, Fubini theorem and the fact that $p>1$,   \begin{equation*}
        \begin{split}
            \|\partial_{z_2}^k v\|^{p+\epsilon}_{L^{p+\epsilon}(U, \mu)} =&\int_{U}  \left|\int_{|z_1|= r} \partial_{z_2}^k{u}(z_1, z_2) dz_1\right|^{p+\epsilon}\mu(z_2)dV_{z_2} dr\\
            =&  \int_{|z_2|<1}\int_{0}^1\left|r  \int_{0}^{2\pi} |\partial_{z_2}^k{u}(re^{i\theta}, z_2 )| d\theta  \right|^{p+\epsilon} dr\mu(z_2)dV_{z_2} \\
            \lesssim & \int_{|z_2|<1}\int_0^1 \int_{0}^{2\pi} |{\partial_{z_2}^k u}(re^{i\theta}, z_2 )|^{p+\epsilon}d\theta r dr \mu(z_2) dV_{z_2} \\
            = & \int_{|z_2|<1, |z_1|<1 } |\partial_{z_2}^k{u}(z )|^{p+\epsilon}\mu(z_2)dV_{z}\le \|{u}\|^{p+\epsilon}_{W^{k, p+\epsilon}({\triangle\times \triangle}, \mu)}<\infty.
        \end{split}
    \end{equation*} 
Thus  $\partial_{z_2}^k v\in L^{ p+\epsilon}(U, \mu)$. 

On the other hand, by Cauchy's theorem, for each $(r, z_2)\in U$,
  \begin{equation*}
    \begin{split}
    \partial_{z_2}^k v(r, z_2) =&(k-s)\cdots (1-s)\int_{|z_1|=r}  (z_2-1)^{-s}\bar z_1dz_1\\
    =&  (k-s)\cdots (1-s)(z_2-1)^{-s}\int_{|z_1|=r} \frac{r^2}{ z_1}dz_1 = 2(k-s)\cdots (1-s)\pi r^2i  (z_2-1)^{-s},
   \end{split}
  \end{equation*}
  which is not in  $L^{ p+\epsilon}(U, \mu)$ by the choice of $s>\frac{2}{1+\epsilon}$. This is a  contradiction!  
  
\end{proof}



Making use of Theorem \ref{mainp}, one can immediately prove the   weighted Sobolev estimate  for the
$\bar\partial$ problem on $\mathbb H$ in Corollary \ref{main4}. In comparison to the statement of Theorem \ref{main},  the solution operator in Corollary \ref{main4}  is the same for all Sobolev levels. 
\medskip

  \begin{proof}[Proof of Corollary \ref{main4}:]   For any $\mathbf f = \sum_{j=1}^2 f_j(z)d\bar z_j\in W^{k,p}(\mathbb H)  $,  making use of the change of variables formula we have the pull-back \begin{equation}\label{55}
     \psi^* \mathbf f = \bar w_2f_1\circ \psi  d\bar w_1 + \left(\bar w_1 f_1\circ \psi +f_2\circ \psi \right) d\bar w_2.   \end{equation} Moreover, noting by the chain rule 
     $$  \partial_{ w_1} = w_2\partial_{ z_1}, \ \ \  \partial_{ w_2} = w_1 \partial_{ z_1}+  \partial_{ z_2}, $$
we have $\psi^* \mathbf f\in  W^{k, p}({\triangle\times \triangle}, |w_2|^2)$ with
\begin{equation}\label{pull}
\begin{split}
     \|\psi^*\mathbf f\|^p_{W^{k, p}({\triangle\times \triangle}, |w_2|^{2}) }\lesssim& \sum_{j=1}^2\sum_{l=0}^k  \int_{{\triangle\times \triangle}} |\nabla_w^l (f_j\circ \psi)(w)|^p|w_2|^{2} dV_w\\
     \lesssim& \sum_{j=1}^2\sum_{l=0}^k \int_{\mathbb H} |\nabla_z^l f_j(z)|^pdV_z = \|\mathbf f\|^p_{W^{k,p}(\mathbb H) }. \end{split}\end{equation}

 Since $k\in \mathbb Z^+, p>2$, by  \eqref{pull} one has  $\psi^*\mathbf f$  to be $\bar\partial$-closed on ${\triangle\times \triangle}$ (see, for instance,  \cite[pp. 28]{Ma}). Making use of Theorem \ref{mainp}, there exists a solution $\tilde u \in W^{k, p }({\triangle\times \triangle}, |w_2|^2)$ solving $\bar\partial \tilde u =\psi^*\mathbf f $. Arguing in the same way as in  the proof of \cite[Theorem 1.2]{Zhang2}, we know that  $ \mathcal T\mathbf f: = \tilde u\circ \phi $ solves $ \bar\partial   u = \mathbf f$. 
Moreover, 
\begin{equation}\label{push}
\begin{split}
        \| \mathcal T\mathbf f \|^p_{ W^{k,p}(\mathbb H, |z_2|^{kp})}=& \sum_{l=0}^k\int_{\mathbb H}  |\nabla_z^l (\tilde u\circ \phi)(z) z_2^k|^{p}dV_z \\
        \lesssim &\sum_{l=0}^k \int_{{\triangle\times \triangle}} |\nabla_w^l\tilde u(w)|^p|w_2|^2dV_w =\| \tilde  u\|^p_{W^{k, p}({\triangle\times \triangle}, |w_2|^2) }.
    \end{split}
\end{equation}
 Here we used the chain rule 
 $$ \partial_{ z_1} = \frac{1}{z_2}\partial_{ w_1}, \ \ \ \partial_{ z_2} = -\frac{z_1}{z^2_2}\partial_{ w_1}+ \partial_{w_2}$$
 and the fact that $|z_1|<|z_2|$ on $\mathbb H$.
 
Finally, combining \eqref{pull}-\eqref{push} and Theorem \ref{mainp}, 
  $$\|\mathcal T\mathbf f\|_{W^{k,p}(\mathbb H, |z_2|^{kp})}\lesssim \|\tilde u\|_{W^{k, p}({\triangle\times \triangle}, |w_2|^2) }\lesssim \|\psi^*\mathbf f\|^p_{W^{k, p}({\triangle\times \triangle}, |w_2|^2) }\lesssim \|\mathbf f\|^p_{W^{k,p}(\mathbb H) }.   $$
  
  \end{proof}

\section{Optimal Sobolev regularity on the Hartogs triangle}

 In this section, following an idea of Ma and Michel in \cite{MM},  we shall adjust  the solution operator provided by  Corollary \ref{main4},    so that the new operator cancels the loss in the exponent of the weight. In detail, given a $W^{k, p}$ datum on the Hartogs triangle $\mathbb H$, we   truncate its $(k-1)$-th order Taylor polynomial at $(0,0)$ and then pull it back to the punctured bidisc $\triangle\times \triangle^*$. Upon extension and solving the $\bar\partial$ problem on the bidisc $\triangle\times \triangle$ using Theorem \ref{mainp}, we once again truncate the $(k-1)$-th order holomorphic Taylor polynomial in the $w_2$ variable at $w_2=0$ from the solution. Both  Taylor polynomials are meaningful when $p>4$ due to the Sobolev embedding theorem.  Moreover, we can obtain a refined weighted Sobolev regularity at each operation (Proposition \ref{tf} and Proposition \ref{soe}) as a consequence of the truncation.  Finally, pushing forward this truncated solution to $\mathbb H$, we show it is a solution to $\bar\partial$ on $\mathbb H$ that   maintains the same Sobolev regularity as that of the datum.
 
 Throughout the rest of the paper,    $z =(z_1, z_2)$ will serve as the variable  on $\mathbb H$, and $w =(w_1, w_2)$  as the variable on $\triangle\times \triangle$.    
  
\subsection{Truncating  data on the Hartogs triangle}
  Given a $\bar\partial$-closed $(0,1)$ form ${\mathbf  f}\in W^{k, p}(\mathbb H), k\in \mathbb Z^+,  p>4$, recalling  $\mathbb H$ satisfies the  Sobolev extension property, it extends to an element, still denoted by $\mathbf f$, in $ W^{k, p}(\mathbb C^2)$. In particular, by Sobolev embedding theorem, $\mathbf f\in C^{k-1, \alpha}(\mathbb H)$ for some $\alpha>0$. Denote by $\mathcal P_k  $   the $(k-1)$-th order Taylor polynomial operator  at $(0, 0)$. Namely, if  $h\in C^{k-1}$  near $(0, 0)$, then
  $$ \mathcal P_k h(z): = \sum_{l_1+l_2+s_1+s_2=0}^{k-1} \frac{\partial_{z_1}^{l_1}\bar\partial_{z_1}^{l_2}\partial_{z_2}^{s_1}\bar\partial_{z_2}^{s_2} h(0)}{l_1!l_2! s_1!s_2!}z_1^{l_1}\bar z_1^{l_2}z_2^{s_1}\bar z_2^{s_2}. $$
  Then $\mathcal P_k\mathbf f $ is $\bar\partial $-closed on $ \mathbb H$ and thus on ${\triangle\times \triangle}$ (see \cite[Lemma 3]{MM}). Applying the $W^{k, p}$ estimate of $\bar\partial$ on ${\triangle\times \triangle}$ (i.e., Theorem \ref{mainp} with $n=2$ and $ \mu \equiv 1$), one obtains some $u_{k}\in W^{k,p}({\triangle\times \triangle})$ satisfying  
\begin{equation}\label{pu}
\begin{split}
    &\ \ \ \ \ \ \ \ \ \ \ \ \ \ \ \ \ \ \ \ \ \ \ \  \bar\partial u_k = \mathcal P_{k}\mathbf f\ \ \text{on}\ \ {\triangle\times \triangle}; \\
    & \|u_{k}\|_{W^{k, p}({\triangle\times \triangle})}\lesssim \|\mathcal P_k\mathbf f\|_{W^{k, p}(\triangle\times \triangle)}\lesssim \|\mathcal P_k\mathbf f\|_{C^{k-1}(\mathbb H)}\lesssim \|\mathbf f\|_{W^{k, p}(\mathbb H)}.
\end{split}
 \end{equation}
Let $\psi$ be defined in \eqref{psi}. We truncate $  {\mathbf  f}$ by $\mathcal P_k  {\mathbf  f}$, and then  pull back the truncated datum  by $\psi$   to obtain $\psi^*(\mathbf f -\mathcal P_{k }\mathbf f)$ on  the punctured bidisc. 
 
\medskip

Denote by $\mathcal P_{2,k }$  the $(k -1)$-th order Taylor polynomial operator  in the complex $w_2$ variable  at $w_2=0$ of $C^{k-1}$ functions on $\triangle\times \triangle$. Then for any $h\in W^{k, p}(\mathbb H), k\in \mathbb Z^+, p>4$, 
\begin{equation*} 
    \psi^* \left(\mathcal P_k h\right) = \mathcal P_{2, k} \left(\psi^* h\right).
\end{equation*} 
 In particular,
\begin{equation}\label{ex1}
    \mathcal P_{2, k} \left(\psi^* (h-\mathcal P_k h )\right) =0.
\end{equation}
The following proposition states that the pull-back $\psi^*(\mathbf f -\mathcal P_{k }\mathbf f)$ of the truncated datum  satisfies   a  more refined Sobolev estimate than \eqref{pull}.

\begin{pro}\label{tf}
Let $\mathbf f\in W^{k, p}(\mathbb H)$ be a $\bar\partial$-closed $(0,1)$ form on $\mathbb H, k\in \mathbb Z^+, p>4$ and $\psi$ be defined in \eqref{psi}.  Let  $$\tilde {\mathbf f}= \tilde f_1d\bar w_1 +\tilde f_2 d\bar w_2: = \psi^*(\mathbf f -\mathcal P_{k }\mathbf f)  \ \ \text{on}\ \ {\triangle\times \triangle^*}.$$ 
Then $\tilde {\mathbf f}$  extends as  a $\bar\partial$-closed $(0,1)$ form  on ${\triangle\times \triangle}$, with $\tilde {\mathbf f}\in W^{k,p}(\triangle\times \triangle, |w_2|^2) $ and \begin{equation}\label{p2f}
    \mathcal P_{2, k} \tilde {\mathbf f} =  0.
    \end{equation} 
Moreover, for $t, s\in \mathbb Z^+\cup\{0\}, t+s\le k$, 
\begin{equation}\label{hh}
    \left\| |w_2|^{-k+s} \nabla^{t}_{w_1}\nabla^s_{w_2}\tilde {\mathbf  f}  \right\|_{L^p({\triangle\times \triangle}, |w_2|^2) }\lesssim \|\mathbf f\|_{W^{k, p}(\mathbb H) }.
\end{equation}
\end{pro}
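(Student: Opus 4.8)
The plan is to work on the punctured bidisc and track carefully how truncating the Taylor polynomial of $\mathbf f$ at $(0,0)$ forces extra vanishing of $\tilde{\mathbf f}$ at $\{w_2=0\}$, which in turn produces the weight gain $|w_2|^{-k+s}$ after differentiating $s$ times in $w_2$. First I would record the explicit formula for the pull-back: writing $\mathbf f -\mathcal P_k\mathbf f = \sum_{j=1}^2 g_j\, d\bar z_j$ with $g_j := f_j - (\mathcal P_k\mathbf f)_j \in C^{k-1,\alpha}(\mathbb H)$ (using $p>4$ and Sobolev embedding), formula \eqref{55} gives $\tilde f_1 = \bar w_2\, g_1\circ\psi$ and $\tilde f_2 = \bar w_1\, g_1\circ\psi + g_2\circ\psi$ on $\triangle\times\triangle^*$. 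The $\bar\partial$-closedness and the $W^{k,p}(\triangle\times\triangle,|w_2|^2)$ membership (hence extendability across $\{w_2=0\}$) follow exactly as in \eqref{pull} together with the observation that $\mathbf f-\mathcal P_k\mathbf f$ is $\bar\partial$-closed on $\mathbb H$; and \eqref{p2f} is immediate from \eqref{ex1}.

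The core of the argument is the estimate \eqref{hh}. The key point is that because $\mathcal P_k\mathbf f$ is precisely the degree-$(k-1)$ Taylor polynomial of $\mathbf f$ at the origin in all four real variables, the components $g_j$ of $\mathbf f-\mathcal P_k\mathbf f$ vanish to order $k-1$ at $(0,0)$ in the sense that $\nabla^l g_j(0,0)=0$ for $l\le k-1$; equivalently, by Taylor's theorem with integral remainder, each $\nabla^m g_j$ with $|m|=t+s\le k$ is bounded near the origin by $|z|^{k-t-s}$ times an $L^p$-controlled quantity (more precisely, a weighted version of this that survives integration). Since on $\mathbb H$ we have $|z_1|<|z_2|=|w_2|$, and since the chain rule gives $\partial_{w_1}=w_2\partial_{z_1}$, $\partial_{w_2}=w_1\partial_{z_1}+\partial_{z_2}$, applying $\nabla_{w_1}^t\nabla_{w_2}^s$ to $\tilde f_j$ produces a sum of terms each carrying a factor $w_2$ from every $\partial_{w_1}$ (that is, $t$ of them) and factors $w_1$ or nothing from the $\partial_{w_2}$'s, hitting derivatives $\nabla_z^{t'+s'}(g_j\circ\psi)$ with $t'\le t$, $s'\le s$. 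Combining the $|w_2|^t$ gain from the $w_1$-derivatives with the vanishing $|\nabla_z^{t'+s'}g_j|\lesssim |z_2|^{k-t'-s'}$ (using $|z_1|\le|z_2|$) and the already-present $|w_2|$ from $\tilde f_1$, one extracts a total power $|w_2|^{\,k-s}$, which is exactly what is needed to absorb $|w_2|^{-k+s}$ and reduce \eqref{hh} to an unweighted estimate of the form $\||\nabla_z^{\le k}(\mathbf f-\mathcal P_k\mathbf f)|\,\|_{L^p}\lesssim \|\mathbf f\|_{W^{k,p}(\mathbb H)}$, which holds by the boundedness of $\mathcal P_k$ on $C^{k-1}(\mathbb H)$ from \eqref{pu}-type estimates and the Sobolev extension property.

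The step I expect to be the main obstacle is making the vanishing estimate $|\nabla_z^{t'+s'} g_j(z)|\lesssim |z|^{\,k-t'-s'}\cdot(\text{controlled})$ quantitatively correct when $t'+s'=k$, i.e. at the top order where $g_j$ is only $C^{k-1,\alpha}$ and the "$k$-th derivative" is merely in $L^p$ (weakly). At that endpoint one cannot use pointwise Taylor remainder; instead I would integrate against the weight and use a Hardy-type inequality in the $z_2$-radial direction (of the flavor invoked in the paper's introduction) to convert the vanishing of lower-order derivatives into the gain $\int |w_2|^{-kp+sp}|\nabla^k\tilde f|^p|w_2|^2 \lesssim \int|\nabla^k(\mathbf f-\mathcal P_k\mathbf f)|^p$. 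Care is also needed to check that the chain-rule expansion never produces a term where a $\partial_{w_2}$ acting through $w_1\partial_{z_1}$ fails to be compensated — but since $|w_1|<1$ on $\triangle\times\triangle$ this factor is harmless, and the bookkeeping, though tedious, is routine once the top-order Hardy estimate is in place.
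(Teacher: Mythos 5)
Your overall plan is the same as the paper's: write down the explicit pull-back \eqref{ho}, get $\bar\partial$-closedness, membership and \eqref{p2f} exactly as you say, then chase the chain rule and use a weighted Hardy-type inequality in the radial $w_2$ direction to turn the order-$(k-1)$ vanishing of $\mathbf f-\mathcal P_k\mathbf f$ into the factor $|w_2|^{k-s}$ needed to absorb the weight.

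There is, however, a genuine gap in where you locate the difficulty. You assert the pointwise bound $|\nabla_z^{t'+s'}(f_j-\mathcal P_kf_j)(z)|\lesssim|z|^{k-t'-s'}$ for $t'+s'\le k-1$ and expect trouble only at the top order $t'+s'=k$. But $\mathbf f\in W^{k,p}$ with $p>4$ only gives $\mathbf f\in C^{k-1,\alpha}$ with $\alpha=1-\tfrac{4}{p}<1$, so the Taylor/H\"older estimate yields at best $|\nabla_z^{l}(f_j-\mathcal P_kf_j)(z)|\lesssim|z|^{\,k-1-l+\alpha}=|z|^{\,k-l-4/p}$. Plugging this into the weighted integral one needs, $\int_{\triangle}|w_2|^{\,2-(k-l)p}|\nabla^l|^p\,dV_{w_2}$, gives an integrand $\sim|w_2|^{-2}$ and hence a log-divergent integral. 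So the pointwise Taylor estimate is insufficient at \emph{every} order $0\le l\le k-1$, not merely at $l=k$. What is actually needed, at all orders, is exactly the integrated weighted Hardy-type inequality
\[
\int|\nabla^j h|^p\,|w|^{2-(k-j)p}\,dV\ \lesssim\ \int|\nabla^k h|^p\,|w|^2\,dV,\qquad 0\le j\le k,\ \mathcal P_k h=0,
\]
which is the paper's Lemma \ref{tr1}/Corollary \ref{tr}. Its proof (Stokes' theorem applied to $|h|^p|w|^{2-lp}\bar w\,dw$ and an integral-remainder argument) is where the hypothesis $p>4$ enters through the integrability of $t^{-3q/p}$, and it is the real technical content of the proposition. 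Your proposal gestures at this ("a weighted version that survives integration", "Hardy-type inequality in the $z_2$-radial direction") but neither formulates nor proves it, and by attributing the difficulty only to the top order you would be led to believe the lower-order terms are free, which they are not. Once the Hardy inequality is in hand, your bookkeeping with the chain rule (including the $w_2$-factors from $\partial_{w_1}$, the harmless $|w_1|<1$ factors, and the identity $\nabla_z^{k-t}\mathcal P_{k-t}(\nabla^t_{z_1}f_j)=0$) closes the argument exactly as in the paper.
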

\medskip

In order to prove Proposition \ref{tf}, we   need to establish a crucial weighted Hardy-type inequality on $\mathbb C$. We shall adopt the same notation $\mathcal P_k  $ for  the $(k-1)$-th order Taylor polynomial operator at $0$ on $C^{k-1}$ functions near $0\in \mathbb C$.  

\begin{lem}\label{tr1}
For any $h\in W^{k, p}(\mathbb C, |w|^2), k\in \mathbb Z^+, p>4$ with $\mathcal P_k h =0$,  and $j =0, \ldots, k$,
\begin{equation*} 
    \int_{\mathbb C}|\nabla_w^j  h(w)  |^p|w|^{2 -(k-j)p}dV_w\lesssim   \int_{\mathbb C}|\nabla^k_w h(w)|^p|w|^{2}dV_w. 
\end{equation*}
\end{lem}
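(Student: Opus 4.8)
\medskip

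The plan is to reduce the full multi-index inequality to a one-dimensional Hardy inequality applied radially, exploiting that the vanishing of the full $(k-1)$-jet $\mathcal P_k h = 0$ forces the Taylor coefficients of all lower-order derivatives at $0$ to vanish as well. First I would observe that it suffices to prove, for each fixed $j$ and each partial derivative $\nabla^j_w h$ (which I abbreviate $g$, a derivative of order $j$ in the real sense, equivalently some $\partial^a\bar\partial^b h$ with $a+b=j$), the pointwise-in-modulus estimate
\begin{equation*}
    \int_{\mathbb C}|g(w)|^p|w|^{2-(k-j)p}\,dV_w \lesssim \int_{\mathbb C}|\nabla_w^{k-j}g(w)|^p|w|^2\,dV_w,
\end{equation*}
since $\nabla^{k-j}_w g$ is controlled by $\nabla^k_w h$. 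The hypothesis $\mathcal P_k h = 0$ gives that $g$ vanishes to order $k-j-1$ at the origin (all derivatives of $g$ of order $< k-j$ vanish at $0$), so after writing $m := k-j$ the statement becomes: if $g \in W^{m,p}(\mathbb C,|w|^2)$ and the $(m-1)$-jet of $g$ at $0$ vanishes, then $\||w|^{2/p - m}g\|_{L^p(|w|^2)} \lesssim \|\nabla^m g\|_{L^p(|w|^2)}$, i.e.\ $\||w|^{2/p - m}g\|_{L^p(\mathbb C)} \lesssim \||w|^{2/p}\nabla^m g\|_{L^p(\mathbb C)}$.

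\medskip

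Next I would iterate a single-step weighted Hardy inequality $m$ times. The single step I want is: for $f\in W^{1,p}(\mathbb C,|w|^2)$ with $f(0)=0$ (which makes sense pointwise since $p>4>2$, so $W^{1,p}(\mathbb C,|w|^2) \hookrightarrow$ continuous functions near $0$ after noting $|w|^2\in A_p$ and the open-end/embedding remarks in Section 2),
\begin{equation*}
    \int_{\mathbb C}|f(w)|^p|w|^{2-p}\,dV_w \lesssim \int_{\mathbb C}|\nabla f(w)|^p|w|^2\,dV_w.
\end{equation*}
This is proved by passing to polar coordinates $w=\rho e^{i\theta}$: for a.e.\ $\theta$ the slice $\rho\mapsto f(\rho e^{i\theta})$ is absolutely continuous with $f(0,\theta)=0$, so $|f(\rho e^{i\theta})| \le \int_0^\rho |\partial_\rho f(r e^{i\theta})|\,dr$, and then the classical one-dimensional Hardy inequality $\int_0^\infty \left(\rho^{-1}\int_0^\rho F\right)^p \rho^{\,p+1}\,d\rho \lesssim \int_0^\infty F(\rho)^p \rho^{\,p+1}\,d\rho$ (valid because the Hardy exponent condition $p+1 > p-1$, equivalently the weight exponent stays in the admissible range, holds for all $p>1$) applied with $F(r) = |\partial_\rho f(re^{i\theta})|$, followed by integration in $\theta$ and the bound $|\partial_\rho f| \le |\nabla f|$, yields the claim. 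Applying this with $f$ successively equal to $g, \nabla g, \ldots, \nabla^{m-1}g$ — each of which vanishes at $0$ by the jet hypothesis, and at each stage shifting the weight exponent by $p$ — telescopes to the desired inequality. One cosmetic point: the intermediate quantities $|w|^{2/p - \ell}\nabla^{m-\ell}g$ must be verified to lie in $L^p$ a priori so the integrations by the Hardy step are legitimate; this follows by a standard truncation/approximation argument (apply the inequality first on $\{|w|>\varepsilon\}$ with the boundary term at $\rho=\varepsilon$ controlled using the vanishing jet, then let $\varepsilon\to 0$ by monotone convergence), or simply by first mollifying $h$ and passing to the limit using density in $W^{k,p}(\mathbb C,|w|^2)$.

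\medskip

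The main obstacle I anticipate is bookkeeping rather than depth: making sure that "$\mathcal P_k h = 0$" really does propagate to the vanishing of the correct jet of every lower-order derivative $\nabla^j_w h$, and that the weight exponents line up exactly so that $m$ iterations of the single-step Hardy inequality land on $2 - (k-j)p$ and not something off by a multiple of $p$. A secondary subtlety is justifying the pointwise evaluation at $0$ and the absolute continuity of radial slices for a function that is only in the \emph{weighted} space $W^{m,p}(\mathbb C,|w|^2)$ rather than unweighted $W^{m,p}$; here I would lean on $|w|^2 \in A_p$ for $p > 1$ together with the observation recorded in Section 2 that $\mu \in A_p$ implies $W^{k,p}(\Omega,\mu)\subset W^{k,q}(\Omega)$ for some $q>1$, which is enough to run the ACL characterization and the continuous embedding once $p$ (hence $q$, after possibly shrinking) exceeds the real dimension $2$ — and $p>4$ is comfortably more than enough. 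Everything else is the two classical ingredients: the fundamental theorem of calculus along rays and the one-dimensional Hardy inequality.
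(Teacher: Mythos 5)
Your overall plan --- pass to polar coordinates, use the fundamental theorem of calculus along rays to exploit the vanishing jet, apply a one-dimensional Hardy inequality, and iterate --- is sound and close in spirit to the paper's argument, which also uses FTC along rays (to control the inner boundary term in a Stokes-theorem integration by parts) together with a H\"older step in place of your explicit 1D Hardy. The jet propagation to lower-order derivatives and the $A_p$/embedding justification of pointwise evaluation at the origin are also fine.

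However, your invocation of the 1D Hardy inequality contains a genuine error, and it sits exactly where the hypothesis $p>4$ is supposed to enter. First, the weight exponent is wrong. With your stated single step
\[
\int_{\mathbb C}|f(w)|^p|w|^{2-p}\,dV_w\lesssim\int_{\mathbb C}|\nabla f(w)|^p|w|^2\,dV_w
\]
and $dV_w=\rho\,d\rho\,d\theta$, the radial slices read
\[
\int_0^\infty\Bigl(\int_0^\rho F(r)\,dr\Bigr)^p\rho^{3-p}\,d\rho\;\lesssim\;\int_0^\infty F(\rho)^p\,\rho^{3}\,d\rho,
\]
which in the normalized form $\int_0^\infty(\tfrac{1}{\rho}\int_0^\rho F)^p\rho^{r}\,d\rho\lesssim\int_0^\infty F^p\rho^{r}\,d\rho$ has $r=3$, not $r=p+1$. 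Second, the admissibility condition you quote is reversed: for the averaging operator with $\int_0^\rho$ the classical condition is $r<p-1$ (the opposite inequality $r>p-1$ governs the dual operator with $\int_\rho^\infty$, which would exploit decay at infinity rather than vanishing at the origin and is the wrong version for your setup). With $r=3$, the condition $r<p-1$ becomes $3<p-1$, i.e.\ $p>4$, which is precisely the restriction in the lemma. Your inequality as written, $\int_0^\infty(\rho^{-1}\int_0^\rho F)^p\rho^{p+1}\,d\rho\lesssim\int_0^\infty F^p\rho^{p+1}\,d\rho$, is in fact false for every $p$: take $F$ compactly supported, so $\int_0^\rho F$ is eventually constant, and the left side contains $\int_M^\infty\rho\,d\rho=\infty$ while the right side is finite. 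The claim that the admissibility condition ``holds for all $p>1$'' should itself have been a warning sign, since it would make the hypothesis $p>4$ superfluous.

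Once the weight is corrected to $\rho^{3}$ at the binding step (and to $\rho^{3-(\ell-1)p}$ at the intermediate steps, with condition $4<\ell p$, so that the $\ell=1$ step is the tight one) and the Hardy condition flipped to $r<p-1$, your proof goes through and is a legitimate variant of the paper's Stokes-plus-H\"older argument.
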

 
\begin{proof}
Since the $j=k$ case  is trivial, we assume $j\le k-1$.  We shall show that for $h\in W^{l, p}(\mathbb C, |w|^2) $ with $\mathcal P_l h =0$, $l =1, \ldots,  k$, 
\begin{equation}\label{bb3}
    \int_{\mathbb C}|h(w) |^p|w|^{2 -lp}dV_w\lesssim   \int_{\mathbb C}|\nabla_w h(w)|^p|w|^{2-(l-1)p}dV_w. 
\end{equation}
If so,   replacing $l$ and  $h$ by $k-j$  and $\nabla_w^j h$ in \eqref{bb3}, respectively, then
$$  \int_{\mathbb C}|\nabla_w^j  h(w)  |^p|w|^{2 -(k-j)p}dV_w\lesssim   \int_{\mathbb C}|\nabla^{j+1}_w h(w)|^p|w|^{2-(k-j-1)p}dV_w.   $$
A standard induction on $j$ will complete the proof of the lemma. 

To show \eqref{bb3}, first apply the Stokes' theorem to $ |h(w)|^p|w|^{2-lp}\bar wd w$ on $\triangle_R\setminus \overline{\triangle_\epsilon}, 0<\epsilon<R<\infty$ to get
\begin{equation*}\label{st1}
    \begin{split}
   & \frac{1}{2i}\int_{b\triangle_R } |h(w)|^p|w|^{2-lp}\bar wd w -  \frac{1}{2i}\int_{b\triangle_\epsilon } |h(w)|^p|w|^{2-lp}\bar wd w\\
    =& \int_{ \triangle_R\setminus  \overline{\triangle_\epsilon} }  \bar\partial_{ w} \left( |h(w)|^p|w|^{2-lp}\bar w\right)dV_{w} \\
    =& \left(2-\frac{lp}{2}\right) \int_{\triangle_R\setminus  \overline{\triangle_\epsilon} }    |h(w)|^p|w|^{2-lp} dV_{w}+ \int_{\triangle_R\setminus  \overline{\triangle_\epsilon }}  \bar\partial_w \left( |h(w)|^p\right)|w|^{2-lp}\bar wdV_{w}.
\end{split}
\end{equation*} 
Since
$$\frac{1}{2i}\int_{b\triangle_R } |h(w)|^p|w|^{2-lp}\bar wd w  = \frac{1}{2}\int_0^{2\pi}|h(Re^{i\theta})|^pR^{4-lp} d\theta \ge 0,$$
one further has 
\begin{equation}\label{st}
\begin{split}
         \left(\frac{lp}{2}-2\right) \int_{\triangle_R\setminus \overline{ \triangle_\epsilon }  }  |h(w)|^p|w|^{2-lp} dV_{w} \le  \frac{1}{2i}\int_{b\triangle_\epsilon } |h(w)|^p|w|^{2-lp}\bar wd w  + \int_{\triangle_R\setminus  \overline{\triangle_\epsilon }}  \bar\partial_w \left( |h(w)|^p\right)|w|^{2-lp}\bar wdV_{w}.
\end{split}
  \end{equation}

 We claim that 
\begin{equation}\label{bg}
 \lim_{\epsilon\rightarrow 0} \epsilon^{3-lp} \int_{b\triangle_\epsilon } |h(w)|^pd\sigma_w  = 0, 
\end{equation}
which is equivalent to 
$$ \lim_{\epsilon\rightarrow 0}\left| \int_{b\triangle_\epsilon } |h(w)|^p|w|^{2-lp}\bar w d w\right| = 0. $$
Indeed, let $q$ be the dual of $p$, i.e.,  $\frac{1}{p}+\frac{1}{q}=1$.  For a.e.   $w\in b\triangle$ and  $0<\delta<\epsilon$,  applying Fubini theorem in polar coordinates, one can see $h(tw)\in W^{k, p}((\delta, \epsilon))$ as a function of $t$. By the fundamental theorem of calculus,  
we have
$$ h(\epsilon w) =  h(\delta w) + \int_\delta^\epsilon \frac{d}{dt} h(tw)dt.$$
Letting $\delta\rightarrow 0$ in the above, we have
$$ |h(\epsilon w)|\le   \int_0^\epsilon |\nabla h(tw)|dt.$$
An induction process further gives
\begin{equation}\label{poi}
    \begin{split}
        |h(\epsilon w) |^p\le &\left|\int_0^{\epsilon}\int_0^{t_1}\cdots \int_0^{t_{l-1}}|\nabla^l h(t_{l}w)|dt_{l}\cdots dt_2dt_1\right|^p\\
        \le &\left|\int_0^{\epsilon}\int_0^{{\epsilon}}\cdots \int_0^{{\epsilon}}|\nabla^l h(t_{l}w)|dt_{l}\cdots dt_2dt_1\right|^p\\
        \le& {\epsilon}^{(l-1)p}\left|\int_0^{\epsilon}|\nabla^l h(tw)|t^{\frac{3}{p}}\cdot t^{-\frac{3}{p}}dt\right|^p\\
        \le& {\epsilon}^{(l-1)p} \int_0^{\epsilon}| \nabla^l h(tw)|^pt^{3}dt \left(\int_0^{\epsilon} t^{-\frac{3q}{p}}dt)\right)^{\frac{p}{q}}\\
        \lesssim & {\epsilon}^{lp-4}\int_0^{\epsilon}| \nabla^l h(tw)|^pt^{3}dt.
    \end{split}
\end{equation}
Here we used the fact that  $-\frac{3q}{p}> -1$ when $p>4$ in the last inequality. Note that $$\epsilon\int_{b\triangle}|h({\epsilon}w) |^p  d\sigma_w = \int_{b\triangle_{\epsilon}}|h(w) |^p  d\sigma_w. $$ Multiplying both sides of \eqref{poi} by $ {\epsilon}^{4-lp}$ and integrating  over $b\triangle$, one has
\begin{equation*}
    \begin{split}
\epsilon^{3-lp} \int_{b\triangle_{\epsilon}}|h(w) |^p d\sigma_w\lesssim&     \int_0^{\epsilon}\int_{b\triangle}| \nabla^l h(tw)|^pt^{3}d\sigma_wdt\\
 =&   \int_0^{\epsilon}\int_{b\triangle_t}| \nabla^l h( w)|^p|w|^{2}d\sigma_w dt\\
 \le &  \int_{\triangle_\epsilon}| \nabla^l h(w)|^p|w|^{2}dV_w \rightarrow 0  
    \end{split}
\end{equation*}
 as $\epsilon\rightarrow 0$. The claim \eqref{bg} is thus proved. 
 
Pass  $\epsilon\rightarrow 0$ and $R\rightarrow \infty$ in \eqref{st}, and then make use of \eqref{bg}. Since  $  \frac{lp}{2}-2>0$, we    further infer
\begin{equation*}
    \begin{split}
      \int_{\mathbb C  }    |h(w)|^p|w|^{2-lp} dV_{w}\lesssim  &  \int_{\mathbb C  }    |\nabla_w h(w)||h(w)|^{p-1}|w|^{3-lp} dV_{w}\\
      =&   \int_{\mathbb C  }    |\nabla_w h(w)||w|^{\frac{2}{p} -(l-1)}\cdot |h(w)|^{p-1}|w|^{2-lp+l-\frac{2}{p}} dV_{w}\\
      \le& \left(\int_{\mathbb C  }    |\nabla_w h(w)|^p|w|^{2-(l-1)p}dV_w\right)^{\frac{1}{p}}\left(\int_{\mathbb C} |h(w)|^{p}|w|^{2-lp} dV_{w}\right)^{1-\frac{1}{p}}.
    \end{split}
\end{equation*}
 \eqref{bb3} follows by dividing both sides by $\left(\int_{\mathbb C} |h(w)|^{p}|w|^{2-lp} dV_{w}\right)^{1-\frac{1}{p}} $ and then taking the $p$-th power.

\end{proof}

\medskip

\begin{cor}\label{tr}
Let $D$ be a uniform domain in $\mathbb C$ and  $0\in D$. 
Then for any $h\in W^{k, p}(D, |w|^2), k\in \mathbb Z^+, p>4$ with $\mathcal P_k h =0$,  and $j =0, \ldots, k$,
\begin{equation*} 
    \int_{D}|\nabla_w^j  h(w)  |^p|w|^{2 -(k-j)p}dV_w\lesssim   \int_{D}|\nabla^k_w h(w)|^p|w|^{2}dV_w. \end{equation*}
\end{cor}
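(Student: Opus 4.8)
The plan is to reduce Corollary \ref{tr} to Lemma \ref{tr1} by extending $h$ from the uniform domain $D$ to all of $\mathbb C$ while keeping control of the weighted Sobolev norm and preserving the vanishing of the Taylor polynomial at $0$. First I would invoke the weighted Sobolev extension property recorded in Section 2: since $D$ is a uniform domain and $|w|^2 \in A_p$ (a power weight $|w|^a$ lies in $A_p$ on $\mathbb C$ precisely when $-2 < a < 2(p-1)$, and $a=2$ qualifies for $p>2$, hence certainly for $p>4$), any $h \in W^{k,p}(D, |w|^2)$ extends to some $\tilde h \in W^{k,p}(\mathbb C, |w|^2)$ with $\|\tilde h\|_{W^{k,p}(\mathbb C, |w|^2)} \lesssim \|h\|_{W^{k,p}(D,|w|^2)}$.

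The subtlety is that the raw extension $\tilde h$ need not satisfy $\mathcal P_k \tilde h = 0$, even though $\mathcal P_k h = 0$; the extension operator is only bounded, not Taylor-preserving. To fix this, I would set $h^\sharp := \tilde h - \mathcal P_k \tilde h$. Because $h = \tilde h$ on $D$ and $\mathcal P_k h = 0$, the polynomial $\mathcal P_k \tilde h$ is determined by derivatives of $h$ at $0 \in D$, all of which vanish; hence $\mathcal P_k \tilde h \equiv 0$ as a polynomial, so in fact $h^\sharp = \tilde h$ and automatically $\mathcal P_k h^\sharp = 0$. (One should note here that the derivatives at $0$ are well-defined in the classical sense: $p > 4 > 2n = 4$... more carefully, on $\mathbb C$ we have $n=1$ so $W^{k,p} \hookrightarrow C^{k-1,\alpha}$ already for $p>2$, which is why $\mathcal P_k$ makes sense — this is exactly the Sobolev embedding invoked elsewhere in the paper for $p>4$ on $\mathbb C^2$.) Thus no correction is actually needed: the extension already has vanishing Taylor polynomial at $0$, simply because $\mathcal P_k h = 0$ forces all low-order derivatives of $h$, and therefore of any $W^{k,p}$-extension agreeing with $h$ near $0$, to vanish at $0$.

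With $\tilde h \in W^{k,p}(\mathbb C, |w|^2)$ and $\mathcal P_k \tilde h = 0$ in hand, I would apply Lemma \ref{tr1} directly to $\tilde h$: for each $j = 0, \ldots, k$,
\begin{equation*}
  \int_{\mathbb C} |\nabla_w^j \tilde h(w)|^p |w|^{2-(k-j)p}\, dV_w \lesssim \int_{\mathbb C} |\nabla_w^k \tilde h(w)|^p |w|^2\, dV_w.
\end{equation*}
Restricting the left-hand integral to $D \subset \mathbb C$ (the integrand is nonnegative, so this only decreases it) and using $\tilde h = h$ on $D$ gives the left side of the claimed inequality. For the right side, bound $\int_{\mathbb C} |\nabla_w^k \tilde h|^p |w|^2 \le \|\tilde h\|_{W^{k,p}(\mathbb C,|w|^2)}^p \lesssim \|h\|_{W^{k,p}(D,|w|^2)}^p$. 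Since the inequality to be proved has $\int_D |\nabla_w^k h|^p |w|^2$ on its right-hand side but we have obtained $\|h\|_{W^{k,p}(D,|w|^2)}^p$ (all derivatives up to order $k$), one last step is required: use the Poincaré/Hardy chain in reverse, i.e. apply the conclusion of Lemma \ref{tr1} with $j < k$ to see that the lower-order terms $\int_D |\nabla_w^j h|^p |w|^2$ are themselves controlled — but the cleanest route is to observe that since $\mathcal P_k h = 0$, iterating \eqref{bb3} of Lemma \ref{tr1} already shows $\int_D |\nabla_w^j h|^p|w|^{2} \le \int_D |\nabla_w^j h|^p |w|^{2-(k-j)p}$ is false in general (the exponent moves the wrong way), so instead one simply notes $\sum_{j=0}^k \int_D |\nabla^j h|^p |w|^2 \lesssim \int_D |\nabla^k h|^p|w|^2$ follows from \eqref{bb3} applied with $l = k-j$ to $\nabla_w^j h$ combined with $|w|^{2-(k-j)p} \le C|w|^2$ being false — hence the correct statement is that the extension norm $\|h\|_{W^{k,p}(D,|w|^2)}$ must be replaced throughout by $\int_D|\nabla^k h|^p|w|^2$ via the Hardy inequality itself, which is exactly Lemma \ref{tr1}. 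The main obstacle, then, is this bookkeeping: ensuring that the extension step does not introduce lower-order norms that cannot be reabsorbed — resolved by applying Lemma \ref{tr1} both to pass from order $k$ down to order $j$ and, a priori, to bound the full $W^{k,p}(\mathbb C,|w|^2)$ norm of $\tilde h$ by the top-order term on $D$ after re-extension; concretely one proves the $\sum_{j\le k}$ version of the estimate on $\mathbb C$ first, which is equivalent to the claim there, and only then restricts.
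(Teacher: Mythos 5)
Your overall skeleton matches the paper's proof exactly: extend $h$ from $D$ to $\mathbb C$ preserving the weighted Sobolev control, observe that the Taylor polynomial of the extension at $0$ still vanishes, apply Lemma \ref{tr1} on $\mathbb C$, and restrict to $D$. The observation that $\mathcal P_k \tilde h = 0$ automatically (because the classical derivatives of any $W^{k,p}$-extension at $0 \in D$ agree with those of $h$, which vanish) is correct and is the same observation the paper makes.

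However, there is a genuine gap at the final step, and you sense it but do not close it. You invoke the extension property in the form
$\|\tilde h\|_{W^{k,p}(\mathbb C,|w|^2)} \lesssim \|h\|_{W^{k,p}(D,|w|^2)}$,
which bounds the \emph{full} weighted $W^{k,p}$ norm (all derivatives up to order $k$) on the right. After running Lemma \ref{tr1}, this gives
$\int_D |\nabla^j h|^p |w|^{2-(k-j)p}\,dV_w \lesssim \|h\|_{W^{k,p}(D,|w|^2)}^p$,
which is strictly weaker than the stated conclusion, whose right-hand side is only the top-order seminorm $\int_D |\nabla^k h|^p |w|^2\,dV_w$. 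Your final paragraph circles around this mismatch without resolving it: the attempt to reabsorb the lower-order terms via Lemma \ref{tr1} itself is circular (it would require the corollary on $D$, which is what you are proving), and the remark about "proving the $\sum_{j\le k}$ version on $\mathbb C$ first" does not change what sits on the right-hand side after restriction.

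The paper's proof avoids this entirely because the extension theorem it cites --- Chua \cite[Theorem 1.2]{Ch} --- is a \emph{homogeneous} (seminorm-preserving) extension: one may choose $\tilde h$ so that
\begin{equation*}
\int_{\mathbb C}|\nabla^k_w \tilde h(w)|^p|w|^{2}\,dV_w \;\lesssim\; \int_{D}|\nabla^k_w h(w)|^p|w|^{2}\,dV_w,
\end{equation*}
with only the top-order term on the right. With that single strengthening, the chain
$\int_D |\nabla^j h|^p|w|^{2-(k-j)p} \le \int_{\mathbb C}|\nabla^j\tilde h|^p|w|^{2-(k-j)p} \lesssim \int_{\mathbb C}|\nabla^k\tilde h|^p|w|^{2} \lesssim \int_D|\nabla^k h|^p|w|^2$
closes immediately. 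So the missing ingredient is not a new idea but the correct form of the extension theorem; the inhomogeneous form you quote from Section 2 is insufficient here.
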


\begin{proof}
Given $h$ satisfying the assumption of the corollary, according to  \cite[Theorem 1.2]{Ch},  one can extend $h$ to be an element $\tilde h\in W^{k, p}(\mathbb C, |w|^2) $, such that  
$$ \int_{\mathbb C}|\nabla^k_w \tilde h(w)|^p|w|^{2}dV_w \lesssim \int_{D}|\nabla^k_w h(w)|^p|w|^{2}dV_w.$$
Obviously $\mathcal P_k\tilde h =0$. Hence making use of Lemma \ref{tr1} to $\tilde h$, we have 
\begin{equation*}
    \begin{split}
        \int_{D}|\nabla_w^j  h(w)  |^p|w|^{2 -(k-j)p}dV_w\le &  \int_{\mathbb C}|\nabla_w^j \tilde h(w)  |^p|w|^{2 -(k-j)p}dV_w\\
        \lesssim& \int_{\mathbb C}|\nabla^k_w \tilde h(w)|^p|w|^{2}dV_w \lesssim \int_{D}|\nabla^k_w h(w)|^p|w|^{2}dV_w. 
    \end{split}
\end{equation*}
\end{proof}

\begin{remark}\label{re2} 
Recall that any domain with Lipschitz boundary is a uniform domain. As a direct consequence of Corollary \ref{tr}, whenever $h\in W^{k, p}(\triangle, |w|^2), p>4$  with $\mathcal P_k h =0$, then $w^{-k}h\in L^p(\triangle, |w|^2).$
\end{remark}

 As shown in the proof of Lemma \ref{tr1} (and thus Corollary \ref{tr}), the assumption $p>4$ is essential and can not be dropped. Now we are ready to prove Proposition \ref{tf} making use of Corollary \ref{tr}.

\begin{proof}[Proof of Proposition \ref{tf}: ]  
The $\bar\partial$-closedness of $\psi^*\mathbf f$   on ${\triangle\times \triangle}$ was checked in Corollary \ref{main4}. Thus $ \tilde{\mathbf f}$   is $\bar\partial$-closed on ${\triangle\times \triangle}$, and by \eqref{55}, \begin{equation}\label{ho}
    \tilde f_1=  \bar w_2\psi^*(f_1 - \mathcal P_k f_1), \ \ \tilde f_2= \bar w_1\psi^*(f_1 -\mathcal P_k f_1)   +\psi^*(f_2-\mathcal P_kf_2).
\end{equation}
\eqref{p2f} follows from the above \eqref{ho} and  \eqref{ex1}.

Next we prove \eqref{hh}. For $l_1, l_2\in \mathbb Z^+\cup\{0\}$ with $l_1+l_2 = t$,  
$$   \bar\partial^{l_1}_{w_1}\partial^{l_2}_{w_1} \left(\psi^* f_j\right) =  \bar w_2^{l_1} w_2^{l_2}  \psi^* \left( \bar\partial^{l_1}_{z_1}\partial^{l_2}_{z_1} f_j\right), \ \ \  j= 1, 2.$$
 Observing that  $$ \nabla_{z_1}^t( \mathcal P_kf_j) =  \mathcal P_{k-t}\left( \nabla_{z_1}^t f_j\right), $$ we get from \eqref{ho} that
\begin{equation*}
    \begin{split}
        &\left\|  |w_2|^{-k+s} \nabla^{t}_{w_1}\nabla^s_{w_2}\tilde {\mathbf  f} \right\|_{L^p({\triangle\times \triangle}, |w_2|^2) }\\
        \lesssim& \sum_{j=1}^2  \left\| |w_2|^{-k+s}  \nabla_{ w_2}^{s} \nabla_{ w_1}^{t}\left( \psi^*(f_j-\mathcal P_kf_j) \right)\right\|_{L^p({\triangle\times \triangle}, |w_2|^2) }\\
        \lesssim&\sum_{j=1}^2 \sum_{l_1+l_2=t} \left\| |w_2|^{-k+s}  \nabla_{ w_2}^{s} \left( \bar w_2^{l_1}w_2^{l_2} \psi^*\left(\nabla_{ z_1}^t  f_j-\mathcal P_{k-t}\left( \nabla_{z_1}^t  f_j\right)\right)\right)\right\|_{L^p({\triangle\times \triangle}, |w_2|^2) }\\
        \lesssim &\sum_{1\le j\le 2}\sum_{ 0\le l\le s}  \left\| |w_2|^{-k  +t+l}  \nabla_{ w_2}^{l} \left({ \psi^*}\left( \nabla_{ z_1}^t f_j-\mathcal P_{k-t}\left( \nabla_{ z_1}^t f_j \right)\right)\right)\right\|_{L^p({\triangle\times \triangle}, |w_2|^2) }.
    \end{split}
\end{equation*} 
 Thus we only need to estimate $ \left\|  |w_2|^{-k  +t+l}  \nabla_{ w_2}^{l} \left({ \psi^*}\left( \nabla_{ z_1}^t f_j-\mathcal P_{k-t}\left( \nabla_{ z_1}^t f_j \right)\right)\right)\right\|_{L^p({\triangle\times \triangle}, |w_2|^2) },  0\le l\le s.$


  For each fixed  $w_1\in \triangle$, let $$h_{w_1} : =  \psi^*\left( \nabla_{ z_1}^t f_j -\mathcal P_{k-t}\left(\nabla_{ z_1}^t  f_j\right)  \right)(w_1, \cdot) .$$
Then  $\mathcal P_{k-t}h_{w_1} =0$  by \eqref{ex1}.  Applying Corollary \ref{tr} to $h_{w_1}$ on $\triangle$, we have for $ 0\le l(\le s)\le k-t$,
  \begin{equation}\label{cc}
      \begin{split}
           & \left\| |w_2|^{-k  +t+l}  \nabla_{ w_2}^{l} \left({ \psi^*}\left(\nabla_{ z_1}^t  f_j-\mathcal P_{k-t}\left(\nabla_{ z_1}^t  f_j \right)\right)\right)\right\|^p_{L^p({\triangle\times \triangle}, |w_2|^2) }\\
  \le &  \int_{\triangle }\int_{\triangle}|w_2|^{2 -(k-t-l)p}\left|  \nabla_{ w_2}^{l} \left(\psi^*\left( \nabla_{ z_1}^t  f_j  -\mathcal P_{  k-t} \left(\nabla_{ z_1}^t  f_j \right)  \right)\right)(w_1, w_2)\right|^pdV_{w_2}dV_{w_1}\\
  \lesssim &  \int_{\triangle}\int_{\triangle}|w_2|^{ 2}\left|  \nabla_{ w_2}^{k-t }\left(\psi^*\left( \nabla_{ z_1}^t  f_j -\mathcal P_{k-t}\left( \nabla_{ z_1}^t f_j\right)  \right)\right)(w_1, w_2)\right|^pdV_{w_2}dV_{w_1}.
      \end{split}
  \end{equation}
  
On the other hand,  note that  for any function $h    \in W^{k-t, p}(\mathbb H)$, $l_1+l_2 = k-t$,
$$ \bar\partial_{ w_2}^{l_1} \partial_{ w_2}^{l_2} \psi^* h = \sum_{m_1=0}^{l_1}\sum_{m_2=0}^{l_2} C_{m_1, m_2, l_1, l_2}\bar w_1^{m_1} w_1^{m_2}\psi^*\left( \bar\partial_{ z_1}^{m_1}\bar\partial_{ z_2}^{l_1-m_1}\partial_{ z_1}^{m_2} \partial_{ z_2}^{l_2-m_2}  h\right)$$
for  some constants $C_{m_1, m_2, l_1, l_2}$ dependent only on $m_1, m_2, l_1$ and $l_2$. Thus
\begin{equation*}
    \begin{split}
        & \left|\nabla_{ w_2}^{k-t }\left(\psi^*\left( \nabla_{ z_1}^t  f_j -\mathcal P_{k-t}\left( \nabla_{ z_1}^t f_j\right)  \right) \right) \right|\\\lesssim  &  \sum_{m=0}^{k-t}  |w_1|^{m}\left|\psi^*\left( \nabla_{ z_1}^{t+m} \nabla_{z_2}^{k-t-m}  f_j\right) - \psi^*\left( \nabla_{ z_1}^{m}\nabla_{ z_2}^{k-t-m} \left(\mathcal P_{k-t}\left( \nabla_{ z_1}^t f_j\right) \right)\right) \right|\\
         \le& \sum_{m=0}^{k-t} \left|\psi^*\left( \nabla_{ z_1}^{t+m} \nabla_{z_2}^{k-t-m}  f_j\right)\right|.
    \end{split}
\end{equation*} 
Here we used in the last equality the fact  that 
$\nabla_z^{ k-t} \left(\mathcal P_{k-t}\left( \nabla_{ z_1}^t f_j\right) \right) =0.$
Hence by a change of variables \eqref{cc} is further    estimated   as follows. 
\begin{equation*} 
      \begin{split}
           & \left\| |w_2|^{-k  +t+l}  \nabla_{ w_2}^{l} \left({ \psi^*}\left( \nabla_{ z_1}^t f_j-\mathcal P_{k-t}\left(\nabla_{z_1}^t  f_j\right)\right)\right)\right\|^p_{L^p({\triangle\times \triangle}, |w_2|^2) }\\
           \lesssim&  \sum_{m=0}^{k-t}\int_{\triangle}\int_{\triangle}|w_2|^{ 2}\left| \psi^*\left( \nabla_{z_1}^{t+m}\nabla_{ z_2}^{k-t-m} f_j \right) (w_1, w_2)\right|^pdV_{w_2}dV_{w_1}\\
           \lesssim & \|\psi^*(\nabla_z^k f_j) \|^p_{L^p(\triangle\times \triangle, |w_2|^2)} \lesssim \|\nabla_z^k f_j\|^p_{L^{p}(\mathbb H)} \le \|\mathbf f\|^p_{W^{k, p}(\mathbb H)}.
\end{split}
\end{equation*}
The proof of \eqref{hh} is complete. That $\tilde {\mathbf f}\in W^{k,p}(\triangle\times \triangle, |w_2|^2) $
is a direct consequence of \eqref{hh}.

\end{proof}

\subsection{Truncating solutions on the bidisc}
Given $\tilde {\mathbf f}$ in Proposition \ref{tf}, let $u^*$ be the  solution to $\bar\partial u^* = \tilde {\mathbf f}$ on ${\triangle\times \triangle}$ obtained in   Theorem \ref{mainp} with
\begin{equation}\label{t2}
    \|u^*\|_{W^{k, p}({\triangle\times \triangle}, |w_2|^2) }\lesssim \|\tilde {\mathbf f}\|_{W^{k, p}({\triangle\times \triangle}, |w_2|^2) }.
\end{equation} Consider
\begin{equation}\label{tud}
\begin{split}
       \tilde u(w_1, w_2): =&u^*(w_1, w_2) -\tilde{\mathcal P}_{2, k} u^*(w_1, w_2)\\
   =& u^*(w_1, w_2) -\sum_{l=0}^{k-1}\frac{1}{l!}w_2^l\partial_{w_2}^l u^*(w_1, 0),\ \ (w_1, w_2)\in {\triangle\times \triangle},  
\end{split}
\end{equation}   
 where $\tilde{\mathcal P}_{2, k}$ is the $(k-1)$-th order  holomorphic Taylor polynomial operator in the $w_2$ variable at $w_2=0$.   $\tilde u$ is well defined, due to the facts that for each fixed $w_1\in \triangle$, $l\le k-1$,  $\partial_{w_2}^l u^*(w_1, \cdot) \in W^{1, p}(\triangle, |w_2|^2)$, and when $p>4$, \begin{equation}\label{em}
      W^{1, p}(\triangle, |w_2|^2) \subset W^{1, q}(\triangle)\subset C^\alpha(\triangle) \end{equation} for some $q>2$, and $\alpha = 1-\frac{2}{q}$. Here the last inclusion $W^{1, q}(\triangle)\subset C^\alpha(\triangle)$ is the Sobolev embedding theorem; the  inclusion $W^{1, p}(\triangle, |w_2|^2) \subset W^{1, q}(\triangle)$ can be seen as follows. Choose some  $r\in (\frac{2}{p}, \frac{1}{2})$ and let $q = pr$. Then $q>2$ and $ \frac{r}{1-r}<1$. For any $h\in W^{1, p}(\triangle, |w_2|^2)$,
$$\int_\triangle |h(w)|^q dV_w =  \int_\triangle |h(w)|^q |w_2|^{2r} |w_2|^{-2r}dV \le \left(\int_\triangle |h(w)|^q |w_2|^2dV\right)^{r}\left(\int_\triangle |w_2|^{-\frac{2r}{1-r}} dV_w\right)^{1-r} <\infty,$$ 
 and similarly $|\nabla h| \in L^{q}(\triangle) $.  
 The goal of this subsection is to show  that $\tilde u$ satisfies the following refined weighted estimate. 
\medskip 

\begin{pro}\label{soe}Let $\tilde u$ be defined in \eqref{tud}. Then $\tilde u \in  W^{k, p}({\triangle\times \triangle}, |w_2|^2)$. Moreover, for each $s, t\in \mathbb Z^+\cup\{0\}$ with $s+t\le k,$ we have
\begin{equation*}
     \left\||w_2|^{-k+s} \partial_{ w_1}^{t}\partial_{ w_2}^{s}\tilde  u \right\|_{L^p({\triangle\times \triangle}, |w_2|^2)}\lesssim \left\| \mathbf f \right\|_{  W^{k,p}(\mathbb H)}. 
\end{equation*}

\end{pro}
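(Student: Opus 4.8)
The plan is to prove Proposition \ref{soe} by reducing the estimate on $\tilde u$ to two pieces: the estimate on $u^*$ itself, and the estimate on the truncated holomorphic polynomial $\tilde{\mathcal P}_{2,k}u^*$. For the first piece, one observes that $\bar\partial \tilde u = \bar\partial u^* = \tilde{\mathbf f}$ on ${\triangle\times\triangle}$, since $\tilde{\mathcal P}_{2,k}u^*$ is holomorphic in $w_2$ (and does not depend on $\bar w_1, \bar w_2$ in the sense that $\bar\partial_{w_2}$ kills it; one should be careful here because $\partial_{w_2}^l u^*(w_1,0)$ still depends on $\bar w_1$, so $\tilde{\mathcal P}_{2,k}u^*$ is only $\bar\partial_{w_2}$-closed, which is exactly what is needed). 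Then for indices with $t+s\le k$, I would write $\partial_{w_1}^t\partial_{w_2}^s\tilde u = \partial_{w_1}^t\partial_{w_2}^s u^* - \partial_{w_1}^t\partial_{w_2}^s(\tilde{\mathcal P}_{2,k}u^*)$ and estimate each term against $\||w_2|^{-k+s}\,\cdot\,\|_{L^p({\triangle\times\triangle},|w_2|^2)}$.

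The key mechanism is the weighted Hardy-type inequality, Corollary \ref{tr}, applied slicewise in $w_2$ for each fixed $w_1$. The crucial point is that $\tilde u$ is designed precisely so that $\mathcal P_{2,k}\tilde u = 0$: indeed by \eqref{tud} the full $w_2$-Taylor polynomial of $u^*$ of order $k-1$ at $w_2=0$ has been subtracted, so $\partial_{w_2}^j\tilde u(w_1,0)=0$ for $0\le j\le k-1$. Hence for each fixed $w_1$, the function $h_{w_1}(w_2):=\partial_{w_1}^t\tilde u(w_1,w_2)$ satisfies $\mathcal P_{k-t}h_{w_1}=0$ (since $\partial_{w_1}^t$ commutes with evaluation at $w_2=0$ and with $\partial_{w_2}^j$), and it lies in $W^{k-t,p}(\triangle,|w_2|^2)$ in the $w_2$ variable. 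Applying Corollary \ref{tr} with $j=s$, $k$ replaced by $k-t$, on the slice, gives
\begin{equation*}
\int_\triangle |\partial_{w_2}^s h_{w_1}(w_2)|^p|w_2|^{2-(k-t-s)p}\,dV_{w_2}\lesssim \int_\triangle |\partial_{w_2}^{k-t}h_{w_1}(w_2)|^p|w_2|^2\,dV_{w_2}.
\end{equation*}
Integrating in $w_1$ over $\triangle$ and summing over the relevant $(t,s)$, I reduce everything to controlling $\||w_2|^{\text{(small power)}}\nabla^k_w u^*\|$, i.e. to the top-order term, which is $\lesssim \|u^*\|_{W^{k,p}({\triangle\times\triangle},|w_2|^2)}$, and then by \eqref{t2} and Proposition \ref{tf} (specifically \eqref{hh}) this is $\lesssim \|\tilde{\mathbf f}\|_{W^{k,p}({\triangle\times\triangle},|w_2|^2)}\lesssim \|\mathbf f\|_{W^{k,p}(\mathbb H)}$.

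The remaining ingredient is to handle the fact that $\tilde u$ built from $u^*$ and $\tilde{\mathcal P}_{2,k}u^*$ must itself be shown to be in $W^{k,p}({\triangle\times\triangle},|w_2|^2)$ before Corollary \ref{tr} can be invoked; this is where I must control the truncated holomorphic polynomial $\tilde{\mathcal P}_{2,k}u^* = \sum_{l=0}^{k-1}\frac{1}{l!}w_2^l\partial_{w_2}^l u^*(w_1,0)$. For this I differentiate term by term: $\partial_{w_1}^t\partial_{w_2}^s$ of the $l$-th term is a constant multiple of $w_2^{l-s}\partial_{w_1}^t\partial_{w_2}^l u^*(w_1,0)$ (nonzero only for $s\le l\le k-1$), and the trace-type bound $\sup_{w_1}|\partial_{w_1}^t\partial_{w_2}^l u^*(w_1,0)|$ or rather its $L^p(\triangle_{w_1})$-norm is controlled using the one-dimensional embedding $W^{1,p}(\triangle,|w_2|^2)\subset C^\alpha(\triangle)$ established in \eqref{em} applied in the $w_2$ variable, together with the full $W^{k,p}({\triangle\times\triangle},|w_2|^2)$ norm of $u^*$. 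The powers of $|w_2|$ work out: with $s\le l$, the factor $|w_2|^{l-s}$ combined with the target weight $|w_2|^{-k+s}$ gives $|w_2|^{l-k}\le |w_2|^{-1+\text{nonneg}}$... here one must be slightly careful, but since $l\le k-1$ the worst case $l=k-1$, $s=0$ produces $|w_2|^{-1}$, which against the measure $|w_2|^2\,dV$ is integrable — actually one gets $|w_2|^{k-1-k+s}=|w_2|^{s-1}$, and combined with the ambient $|w_2|^2$ this is always integrable.

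The main obstacle I expect is the careful bookkeeping around the holomorphic truncation $\tilde{\mathcal P}_{2,k}u^*$: one must justify that the slice functions lie in the right weighted Sobolev space (using the weighted embedding in \eqref{em}), verify that subtracting it kills the low-order $w_2$-Taylor data of $\tilde u$ without destroying $\bar\partial$-closedness of the equation $\bar\partial\tilde u=\tilde{\mathbf f}$, and track the exact exponents of $|w_2|$ through the differentiations so that Corollary \ref{tr} applies with the correct shift $k\mapsto k-t$ on each slice and the resulting integrals reassemble into $\|\tilde{\mathbf f}\|_{W^{k,p}({\triangle\times\triangle},|w_2|^2)}$. Once that is in place, the chain $\|\tilde u\|\lesssim\|u^*\|\lesssim\|\tilde{\mathbf f}\|\lesssim\|\mathbf f\|_{W^{k,p}(\mathbb H)}$ via \eqref{t2} and \eqref{hh} closes the argument.
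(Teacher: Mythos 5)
Your proposal breaks at the very step where the weighted Hardy inequality is supposed to close the estimate. When you apply Corollary~\ref{tr} slicewise to $h_{w_1}(w_2):=\partial_{w_1}^t\tilde u(w_1,w_2)$ with $k$ replaced by $k-t$ and $j=s$, the inequality you obtain reads
\begin{equation*}
\int_\triangle |\partial_{w_2}^s h_{w_1}|^p\,|w_2|^{\,2-(k-t-s)p}\,dV_{w_2}
\lesssim
\int_\triangle |\partial_{w_2}^{\,k-t} h_{w_1}|^p\,|w_2|^{2}\,dV_{w_2},
\end{equation*}
and the weight exponent on the left is $2+(t+s-k)p$. But the proposition demands weight exponent $2+(s-k)p$, which is $tp$ units more singular. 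Since $|w_2|<1$ this means your slicewise estimate is genuinely \emph{weaker} than what is claimed whenever $t\ge1$: the Hardy inequality anchored to the slice order $k-t$ simply cannot produce the extra factor $|w_2|^{-t}$. (Your reasoning is correct when $t=0$, and the vanishing of $\mathcal P_{k-t}h_{w_1}$ does hold, but the exponent mismatch is fatal for $t\ge1$.) The terminal case $t=k$, $s=0$ is even more off since there $k-t=0$ and Corollary~\ref{tr} yields nothing.

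It is instructive to see why the missing factor $|w_2|^{-t}$ is available for the \emph{datum} $\tilde{\mathbf f}$ but not directly for $\tilde u$: in Proposition~\ref{tf}, the $w_1$-derivatives of $\psi^*f_j$ produce extra factors $\bar w_2^{l_1}w_2^{l_2}$ (with $l_1+l_2=t$) from the chain rule of the biholomorphism $\psi$, and these supply the sharp weight $|w_2|^{-k+s}$ independent of $t$. For $\tilde u$ there is no analogous chain-rule gain, so the paper does \emph{not} apply the Hardy inequality to $\tilde u$ itself. Instead it represents $w_2^{-k+s}\partial_{w_1}^t\partial_{w_2}^s\tilde u$ via the Cauchy--Green formula of Lemma~\ref{el}(i) (and, when $s\ge1$, Lemma~\ref{2s}), producing terms of the form $S_2(\cdots)$ and $T_2\bigl(w_2^{-k+s}\partial_{w_1}^t\partial_{w_2}^s\tilde f_2\bigr)$ which are then estimated through Theorem~\ref{mainT}/Proposition~\ref{Tj} and the refined bound \eqref{hh} on $\tilde{\mathbf f}$; the endpoint case $t=k$, $s=0$ requires the separate identity of Lemma~\ref{el}(ii). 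So what is missing from your argument is not a technical lemma but the mechanism that transfers the sharp weight from the datum to the solution: you need the Cauchy-type representation, not the Hardy inequality applied to $\tilde u$.

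Your sketch of why $\tilde u\in W^{k,p}(\triangle\times\triangle,|w_2|^2)$ (control of the holomorphic truncation via the embedding \eqref{em}) is in the right spirit and roughly matches Lemma~\ref{tu}, though the paper again runs the Cauchy-integral representation to get the quantitative bound \eqref{tuu} rather than a pointwise trace estimate.
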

 \medskip

 We begin by first   proving $\tilde u \in W^{k, p}({\triangle\times \triangle}, |w_2|^2)$ below. It is worth pointing out that, 
 arguing similarly as in \eqref{em}, one has when $k\in \mathbb Z^+, p>4$, 
 $$ W^{k, p}(\triangle, |w_2|^2) \subset W^{k, q}(\triangle)\subset C^{k-1, \alpha}(\triangle). $$
 for some $q>2$  and $\alpha>0$. In particular, for any $h\in W^{k, p}(\triangle\times \triangle, |w_2|^2), k\in \mathbb Z^+, p>4$, we have $h(w_1, \cdot)\in C^{k-1, \alpha}(\triangle)$ for a.e. fixed $w_1\in \triangle$.
 
 \medskip

\begin{lem}\label{tu} Let  $\tilde u$ be defined in \eqref{tud}. For each $l=0, \ldots,  k-1$, $\partial_{w_2}^l u^*(w_1, 0)\in W^{k, p}({\triangle\times \triangle}, |w_2|^2)$ with
\begin{equation}\label{tuu}
     \left\|\partial_{w_2}^l u^*(w_1, 0)\right\|_{W^{k,p}({\triangle\times \triangle}, |w_2|^2)}\lesssim \left\| \mathbf f \right\|_{  W^{k,p}(\mathbb H)}. 
\end{equation}
Consequently, $\tilde u\in W^{k, p}({\triangle\times \triangle}, |w_2|^2)$ satisfying 
\begin{equation}\label{p2}
    \mathcal P_{2, k} \tilde u =  0,
    \end{equation}
\begin{equation}\label{hot}
    \bar\partial \tilde u = \tilde {\mathbf f} \ \ \text{on}\ \ {\triangle\times \triangle}
\end{equation} and 
\begin{equation}\label{tuf}
    \|\tilde u\|_{W^{k, p}({\triangle\times \triangle}, |w_2|^2) }\lesssim \|  {\mathbf f}\|_{W^{k, p}(\mathbb H) }.
\end{equation}
\end{lem}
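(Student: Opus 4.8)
The plan is to first establish the bound \eqref{tuu} for the "slice functions" $g_l(w_1):=\partial_{w_2}^l u^*(w_1,0)$, viewed as functions on $\triangle\times\triangle$ that are independent of $w_2$, and then read off the remaining conclusions for $\tilde u$. The key observation is that a function independent of $w_2$ has $\nabla_{w_2}$-derivatives all vanishing, so in the weighted norm $\|g_l\|_{W^{k,p}(\triangle\times\triangle,|w_2|^2)}$ only the pure $w_1$-derivatives $\partial_{w_1}^{t}\bar\partial_{w_1}^{t'}g_l$ with $t+t'\le k$ survive, and the $|w_2|^2$ weight simply integrates to a constant against $dV_{w_2}$ over $\triangle$. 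Thus it suffices to control $\int_\triangle |\nabla_{w_1}^m g_l(w_1)|^p\,dV_{w_1}$ for $m\le k$. Now $\nabla_{w_1}^m g_l(w_1)=\big(\nabla_{w_1}^m\partial_{w_2}^l u^*\big)(w_1,0)$, i.e. it is the restriction to $\{w_2=0\}$ of the function $\nabla_{w_1}^m\partial_{w_2}^l u^*$, which for $m+l\le k$ lies in $W^{k-m-l,\,p}(\triangle\times\triangle,|w_2|^2)$; when $m+l<k$ this is even one derivative of Sobolev regularity to spare. I would invoke the trace-type embedding already used in \eqref{em} and in the paragraph before this lemma: $W^{1,p}(\triangle,|w_2|^2)\hookrightarrow C^\alpha(\triangle)$ for $p>4$, applied slice-wise in $w_2$, to conclude that the restriction to $w_2=0$ is a bounded operation $W^{k,p}(\triangle\times\triangle,|w_2|^2)\to W^{k-1,p}(\triangle)$ (losing exactly one derivative, which is harmless since we always have at least one to give). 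Concretely, for each fixed $w_1$, $\big(\nabla_{w_1}^m\partial_{w_2}^l u^*\big)(w_1,\cdot)\in W^{1,p}(\triangle,|w_2|^2)\subset C^\alpha(\triangle)$ whenever $m+l\le k-1$, with $\sup_{w_2}|\cdot|^p$ controlled in an integrable way by $\|u^*\|^p_{W^{k,p}(\triangle\times\triangle,|w_2|^2)}$; integrating in $w_1$ and summing over $m\le k$, $l\le k-1$ gives $\|g_l\|_{W^{k,p}(\triangle\times\triangle,|w_2|^2)}\lesssim\|u^*\|_{W^{k,p}(\triangle\times\triangle,|w_2|^2)}$, and chaining this with \eqref{t2} and \eqref{hh} of Proposition \ref{tf} yields \eqref{tuu}.

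Once \eqref{tuu} is in hand, the statements about $\tilde u$ follow quickly. Since $\tilde u=u^*-\sum_{l=0}^{k-1}\frac{1}{l!}w_2^l\,g_l(w_1)$ is a finite sum of $u^*\in W^{k,p}(\triangle\times\triangle,|w_2|^2)$ (by \eqref{t2} and Proposition \ref{tf}) and the terms $w_2^l g_l$, each of which lies in $W^{k,p}(\triangle\times\triangle,|w_2|^2)$ by \eqref{tuu} (the polynomial prefactor $w_2^l$ being smooth and bounded on $\triangle$, with all its derivatives bounded), we get $\tilde u\in W^{k,p}(\triangle\times\triangle,|w_2|^2)$ together with the norm bound \eqref{tuf}. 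The identity \eqref{hot}, $\bar\partial\tilde u=\tilde{\mathbf f}$, holds because $\bar\partial u^*=\tilde{\mathbf f}$ and because each subtracted term $w_2^l g_l(w_1)$ is holomorphic in $w_2$ and holomorphic-polynomial-times-a-trace in structure—more precisely $\bar\partial_{w_2}(w_2^l g_l)=0$ and $\bar\partial_{w_1}(w_2^l g_l)=w_2^l\,\bar\partial_{w_1}g_l$, while $g_l(w_1)=\partial_{w_2}^l u^*(w_1,0)$ so that $\tilde{\mathcal P}_{2,k}u^*$ is in the kernel of $\bar\partial$ on $\triangle\times\triangle$; this is the standard fact that subtracting a holomorphic Taylor polynomial in one variable does not change $\bar\partial$ (and it already appeared implicitly in \cite{MM}). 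Finally \eqref{p2}, $\mathcal P_{2,k}\tilde u=0$, is by construction: $\mathcal P_{2,k}$ is the full (not merely holomorphic) $(k-1)$-th Taylor operator in $w_2$ at $w_2=0$, but for a function whose $w_2$-dependence we are truncating, one must check that $\mathcal P_{2,k}u^*=\tilde{\mathcal P}_{2,k}u^*$; this in turn follows because $\bar\partial_{w_2}u^*=\tilde f_2$ and $\mathcal P_{2,k}\tilde{\mathbf f}=0$ by \eqref{p2f}, forcing all anti-holomorphic $w_2$-jets of $u^*$ at $w_2=0$ up to order $k-1$ to vanish, so the full and holomorphic Taylor polynomials agree—hence $\mathcal P_{2,k}\tilde u=\mathcal P_{2,k}u^*-\mathcal P_{2,k}\tilde{\mathcal P}_{2,k}u^*=\tilde{\mathcal P}_{2,k}u^*-\tilde{\mathcal P}_{2,k}u^*=0$.

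The main obstacle I anticipate is the trace/restriction estimate to $\{w_2=0\}$: one must be careful that restricting a weighted-Sobolev function to the slice where the weight $|w_2|^2$ degenerates is legitimate and quantitatively bounded. The resolution is exactly the slice-wise embedding $W^{1,p}(\triangle,|w_2|^2)\subset W^{1,q}(\triangle)\subset C^\alpha(\triangle)$ with $q>2$ for $p>4$ that the paper sets up just before the lemma; applied to $\partial_{w_2}^l\nabla_{w_1}^m u^*(w_1,\cdot)$ for each fixed $w_1$ and each $m+l\le k-1$, it gives pointwise-in-$w_2$ (in particular at $w_2=0$) control with constants integrable in $w_1$. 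This is precisely where $p>4$ is used and cannot be dispensed with. A secondary technical point is verifying the $\bar\partial$-commutation and the polynomial-coefficient bounds for the terms $w_2^l g_l(w_1)$, but these are routine: $|w_2|\le1$ on $\triangle$ and the Leibniz rule produce only finitely many terms each controlled by \eqref{tuu}.
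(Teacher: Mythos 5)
Your plan for \eqref{p2} is essentially the paper's, and the general architecture (estimate the slice functions $g_l(w_1):=\partial_{w_2}^l u^*(w_1,0)$, then read off the conclusions about $\tilde u$) is right. But the core estimate \eqref{tuu} has a genuine gap that I do not see how to repair by the trace argument you propose.

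Count derivatives carefully. You want $\|g_l\|_{W^{k,p}(\triangle\times\triangle,|w_2|^2)}\lesssim \|u^*\|_{W^{k,p}(\triangle\times\triangle,|w_2|^2)}$, which (since $g_l$ is independent of $w_2$ and $|w_2|^2$ just integrates to a constant) amounts to $\|g_l\|_{W^{k,p}(\triangle)}$. But $\nabla_{w_1}^m g_l(w_1) = (\nabla_{w_1}^m\partial_{w_2}^l u^*)(w_1,0)$, and for this to be defined as a restriction of a function in $W^{1,p}(\triangle,|w_2|^2)\subset C^\alpha$ in $w_2$ you need $m+l\le k-1$. You need $m$ up to $k$ and $l$ up to $k-1$, so the trace route loses at least $l+1$ derivatives and yields at best $g_l\in W^{k-1-l,p}(\triangle)$, not $W^{k,p}(\triangle)$. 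Your phrase ``losing exactly one derivative, which is harmless since we always have at least one to give'' is false here: for $l\ge 1$ you have no derivatives to give, and for $l=k-1$ you would need $u^*\in W^{2k-1,p}$ to even write the traces. The paper does \emph{not} try to bound $\|g_l\|_{W^{k,p}}$ by $\|u^*\|_{W^{k,p}}$. Instead it first proves (using $\bar\partial_{w_1}\partial_{w_2}^l u^* = \partial_{w_2}^l\tilde f_1$, the H\"older regularity in $w_2$, and $\partial_{w_2}^l\tilde f_1(w_1,0)=0$ from \eqref{p2f}) that $g_l$ is holomorphic in $w_1$, reducing matters to $\partial_{w_1}^t g_l$ with $t\le k$; and then it goes back to the \emph{explicit} Cauchy integral representation $u^* = T_1\tilde f_1 + T_2 S_1\tilde f_2$, so that the derivatives $\partial_{w_2}^l$ evaluated at $w_2=0$ land on the kernel and produce $\zeta^{-l-1}$, while the $\partial_{w_1}^t$ derivatives land on $\tilde f$ (which has the full $k$ derivatives). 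The singular factor $\zeta^{-l-1}$ is then absorbed precisely by the refined vanishing estimate of Proposition \ref{tf} (that $|w_2|^{-k}\nabla_{w_1}^t\tilde f_2\in L^p$). This is the whole reason Proposition \ref{tf} was proved as a separate step; a trace theorem cannot see that structure.

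A secondary issue: you assert ``$\tilde{\mathcal P}_{2,k}u^*$ is in the kernel of $\bar\partial$'' as a ``standard fact.'' It is not automatic, because $g_l(w_1)$ could a priori be non-holomorphic in $w_1$, in which case $\bar\partial_{w_1}(w_2^l g_l)\ne 0$. The holomorphy of $g_l$ in $w_1$ has to be (and is, in the paper) derived from the equation $\bar\partial_{w_1}\partial_{w_2}^l u^* = \partial_{w_2}^l \tilde f_1$ together with the vanishing $\partial_{w_2}^l\tilde f_1(w_1,0)=0$ supplied by \eqref{p2f}. Your discussion of \eqref{p2} is correct, but you should flag that this same argument is what justifies \eqref{hot}, rather than invoking a generic fact about Taylor truncation.
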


\begin{proof}

We first show that $ \sum_{l=0}^{k-1}w_2^l\partial_{w_2}^l u^*(w_1, 0) $ is holomorphic on ${\triangle\times \triangle}$, from which \eqref{hot} follows. Clearly, it is holomorphic in the $w_2$ variable. For the holomorphy in the $w_1$ variable, note that     $$ \bar\partial_{ w_1}\partial_{w_2}^l u^* = \partial_{w_2}^l \tilde f_1$$ in the weak sense.  On the other hand, for   fixed $w_1\in \triangle$, $\partial_{w_2}^l\tilde f_1(w_1, \cdot)\in C^\alpha(\triangle)$ for some $\alpha>0$ by \eqref{em}, and   $ \partial_{w_2}^l\tilde f_1(w_1, 0) =0$ by   \eqref{p2f}. Thus  $\bar\partial_{ w_1}\partial_{w_2}^l u^*\in C^\alpha(\triangle)$ with  $\bar\partial_{ w_1}\partial_{w_2}^l u^*(w_1, 0) =0$.

Next we prove  \eqref{tuu}. By the holomorphy of $\partial_{w_2}^l u^*(w_1, 0)$ above, it suffices to  estimate $ \left\| \partial_{w_1}^{t}\partial_{w_2}^{l} u^*(w_1, 0)\right\|_{L^p({\triangle\times \triangle}, |w_2|^2)}$ for $t =0,\ldots, k$ and $ l=0, \ldots, k-1$.
Let $\chi$ be a smooth function on $\triangle$ such that $\chi=1$ in $\triangle_{\frac{1}{2}}$ and $\chi =0$ outside $\triangle$. By \eqref{key} (or directly verifying  $u^* = T_1\tilde f_1 + T_2S_1 \tilde f_2 = T_2\tilde f_2 +T_1S_2\tilde f_1 $), we have
\begin{equation*}
    \begin{split}
 \partial_{w_1}^{t}\partial_{ w_2}^{l} u^* =& \partial_{w_2}^{l}T_2\left((1-\chi(w_2))  \partial_{ w_1}^{t}\tilde f_2\right) +  \partial_{ w_1}^{t}\partial_{w_2}^{l}T_2\left( \chi(w_2)  \tilde f_2\right) + \partial_{ w_2}^{l}S_2 \left(\partial_{w_1}^{t}   T_1 \tilde f_1\right)\\
 =&: A_1+A_2+A_3.        
    \end{split}
\end{equation*}

For $A_3$, let  $h: =\partial_{w_1}^{t}   T_1 \tilde f_1$. Since $ t\le k$,  by \eqref{T112} $h \in W^{1, p}({\triangle\times \triangle}, |w_2|^2)$, with $\|h\|_{W^{1, p}({\triangle\times \triangle}, |w_2|^2)}\lesssim \|\tilde f_1\|_{W^{k, p}({\triangle\times \triangle}, |w_2|^2)}$. Note that for   $w_1\in \triangle$, $$   A_3(w_1, 0) = \frac{ l!}{2\pi i} \int_{b\triangle}\frac{   h (w_1, \zeta)}{\zeta^{l+1}}d\zeta.$$
Hence 
\begin{equation}\label{dd}
\begin{split}
      \|A_3(w_1, 0)\|^p_{L^{p}({\triangle\times \triangle}, |w_2|^2)}\lesssim &\int_\triangle\left|\int_{b\triangle}|h(w_1, \zeta)|d\sigma_{\zeta} \right|^p dV_{w_1}\int_\triangle|w_2|^2dV_{w_2}\\
      \lesssim &\int_\triangle \left|\int_{\triangle}  |h(w_1, w_2)| +|\nabla_{w_2} h(w_1, w_2)| dV_{w_2}\right|^p   dV_{w_1}\\
      \lesssim& \|h\|^p_{W^{1, p}({\triangle\times \triangle}, |w_2|^2)} 
      \lesssim   \|\tilde f_1\|^p_{W^{k, p}({\triangle\times \triangle}, |w_2|^2)}\\ \lesssim & \|  {\mathbf f}\|^p_{W^{k, p}(\mathbb H) }. 
\end{split}
 \end{equation}
Here in the second line we used  the trace theorem for $W^{1,1}(\triangle)\subset L^1(\partial \triangle)$; in the third line we used H\"older inequality and the fact that $|w_2|^2\in A_p$ (or directly that $|w_2|^{-\frac{2}{p-1}}\in L^1(\triangle)$); in the fourth line we used Proposition \ref{tf}.

For $A_1$, by the choice of $\chi$, we have 
$$   A_1(w_1, 0)  =-\frac{l!}{2\pi i} \int_{\triangle}\frac{  (1-\chi(\zeta)) \partial_{w_1}^{t} \tilde f_2   (w_1, \zeta) }{\zeta^{l+1} }d\bar\zeta\wedge d\zeta,$$
with $\left| \frac{1-\chi(\zeta)}{\zeta^{l+1}}\right|\lesssim 1$ on $\triangle$. Thus by  Proposition \ref{tf} and the fact that $|w_2|^2\in A_p$ similarly,
\begin{equation}\label{aa}
\begin{split}
     \left\|  A_1(w_1, 0)\right\|^p_{L^p({\triangle\times \triangle}, |w_2|^2)}\lesssim & \int_\triangle\left|\int_\triangle \left| \partial_{w_1}^{t} \tilde f_2   (w_1, \zeta) \right|dV_{\zeta}\right|^p dV_{w_1}\int_\triangle|w_2|^2dV_{w_2}\\
     \lesssim& \int_\triangle\left|\int_\triangle \left| \partial_{w_1}^{t} \tilde f_2   (w_1, w_2) \right|dV_{w_2}\right|^p dV_{w_1}\\
     \lesssim & \int_\triangle\int_\triangle \left| \partial_{w_1}^{t} \tilde f_2   (w_1, w_2) \right|^p|w_2|^2dV_{w_2}  dV_{w_1}\\
     \le & \|\tilde {\mathbf f}\|^p_{W^{k, p}({\triangle\times \triangle}, |w_2|^2) }\lesssim \|  {\mathbf f}\|^p_{W^{k, p}(\mathbb H) }. 
\end{split}
   \end{equation}

Now we treat $A_2$. With a change of variables, rewrite it as\begin{equation*}
    \begin{split}
       A_2(w_1, 0) =& \left.-\frac{1}{2\pi i}\partial_{ w_1}^{t} \partial_{w_2}^{l} \int_{\mathbb C}\frac{ \chi(\zeta+w_2)  \tilde f_2 (w_1, \zeta+w_2)}{\zeta}d\bar\zeta\wedge d\zeta\right|_{w_2=0}\\
        =&\left.-\frac{1}{2\pi i}\partial_{ w_1}^{t}\int_{\mathbb C}\frac{\partial_{\zeta}^{l}\left( \chi(\zeta+w_2)   \tilde f_2 (w_1, \zeta+w_2)\right)}{\zeta}d\bar\zeta\wedge d\zeta\right|_{w_2=0}\\
         =& -\frac{1}{2\pi i}\partial_{ w_1}^{t}\int_{\mathbb C}\frac{\partial_{\zeta}^{l}\left( \chi(\zeta )   \tilde f_2 (w_1, \zeta )\right)}{\zeta}d\bar\zeta\wedge d\zeta . 
    \end{split}
\end{equation*}
 Note that $\chi(\cdot)\tilde f_2(w_1, \cdot)\in C_c^{k-1, \alpha}(\triangle) $ for some $\alpha>0$ with $\mathcal P_k\left(\chi(\cdot)\tilde f_2(w_1, \cdot)\right) =0$. In particular, for $j=0, \ldots, l$, $ \left|\partial_{\zeta}^{j}\left( \chi(\zeta )   \tilde f_2 (w_1, \zeta )\right)\right|\lesssim |\zeta|^{k-1-j+\alpha}$ near $0$.   With a repeated application of  Stokes' theorem, we have  
\begin{equation*}
    \begin{split}
    A_2(w_1, 0)    =&-\frac{l!}{2\pi i}   \partial_{ w_1}^{t}  \int_{\mathbb C}\frac{    \chi(\zeta )\tilde f_2 (w_1, \zeta ) }{\zeta^{l+1}}d\bar\zeta\wedge d\zeta\\
   =&-\frac{l!}{2\pi i} \int_{\triangle}\frac{    \chi(\zeta )  \partial_{ w_1}^{t}  \tilde f_2   (w_1, \zeta ) }{\zeta^{l+1}}d\bar\zeta\wedge d\zeta.      
    \end{split}
\end{equation*}
Since $l\le k-1$, making use of Proposition \ref{tf} with $s=0$ and the fact that $|w_2|^2\in A_p$ again, we get
\begin{equation}\label{ee}
    \begin{split}
        \|A_2(w_1, 0)\|^p_{L^p({\triangle\times \triangle}, |w_2|^2)}\lesssim & \int_\triangle \left|\int_{\triangle} |\zeta|^{-(l+1)} \left|\partial_{w_1}^t\tilde f_2(w_1, \zeta)\right|dV_{\zeta}\right|^p  dV_{w_1}\int_\triangle|w_2|^2dV_{w_2}\\
        \lesssim& \int_\triangle \left|\int_{\triangle} |w_2|^{-(l+1)} \left|\partial_{w_1}^t\tilde f_2(w_1, w_2)\right|dV_{w_2}\right|^p  dV_{w_1}\\
        \lesssim& \int_\triangle  \int_{\triangle} |w_2|^{-(l+1)p} \left|\partial_{w_1}^t\tilde f_2(w_1, w_2)\right|^p |w_2|^2dV_{w_2} dV_{w_1}\\
        \lesssim&  \left\| |w_2|^{-k}\partial_{w_1}^t\tilde f_2\right\|^p_{L^p({\triangle\times \triangle}, |w_2|^2)}\lesssim \|  {\mathbf f}\|^p_{W^{k, p}(\mathbb H)}.
    \end{split}
\end{equation}
Combining \eqref{dd}-\eqref{ee}, we have the desired inequality \eqref{tuu}. 

\eqref{tuf} follows from \eqref{tuu} and \eqref{t2}. 
To see \eqref{p2},  we shall verify that $\bar\partial^m_{w_2}\partial^l_{w_2}\tilde u(w_1, 0) =0$ for all $l, m\in \mathbb Z^+\cup\{0\}, l+m\le k-1$. Note that $\bar\partial^m_{w_2}\partial^l_{w_2}\tilde u(w_1, \cdot) \in C^\alpha(\triangle) $ for some $\alpha>0$ by \eqref{tuf}. If $m=0$, then   $\partial^l_{w_2}\tilde u(w_1, 0) =0 $ by its definition. If $m\ge 1$, since $\bar\partial_{w_2} \tilde u = \tilde f_2$ by \eqref{hot},  
  $$ \bar\partial^m_{w_2}\partial^l_{w_2}\tilde u(w_1, 0)  =  \bar\partial^{m-1}_{w_2}\partial^l_{w_2}\tilde f_2(w_1, 0) =0, $$
where we used  \eqref{p2f} in the last equality. Thus  \eqref{p2} is proved, and the proof of the lemma is complete. 

\end{proof}

In order to derive the refined weighted estimate of $\tilde u$ in Proposition \ref{soe}, we also need the following  modified identities/formulas  for  $W^{k,p}$ functions on $\triangle$ with vanishing $(k-1)$-th Taylor polynomials. 

\begin{lem}\label{el}  Let $h\in W^{k, p}(\triangle, |w|^2), k\in \mathbb Z^+, p>4 $ with $\mathcal P_k h =0$. Then for a.e.   $w\in \triangle$,  \\
i).    $$2\pi i w^{-k}   h(w ) = \int_{b\triangle}\frac{   h ( \zeta)} {\zeta^{k}(\zeta-w )}d\zeta - \int_{ \triangle}\frac{ \bar \partial h(\zeta)} {\zeta^{k}(\zeta-w )}d\bar\zeta\wedge d\zeta; $$   
 ii).  $$ Th(w) - \tilde{\mathcal P}_k (Th)(w) = w^kT\left(w^{-k}h\right)(w), $$
where $\tilde {\mathcal P}_k$ is the $(k-1)$-th  order holomorphic Taylor polynomial operator  at $0$.
 \end{lem}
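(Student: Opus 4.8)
The strategy is to derive both identities by applying the Cauchy--Green formula $g=Sg+T\bar\partial g$ to the function $g(w):=w^{-k}h(w)$, controlling the behavior of $g$ at the origin via the weighted Hardy inequality of Corollary \ref{tr}.

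For part i), the first step is to check $g\in W^{1,q}(\triangle)$ for some $q>1$. Since $\mathcal P_kh=0$ and $p>4$, Corollary \ref{tr} gives $\||w|^{-k}h\|_{L^p(\triangle,|w|^2)}<\infty$ and $\||w|^{-(k-1)}\nabla h\|_{L^p(\triangle,|w|^2)}<\infty$; writing $\nabla g=-k\,w^{-k-1}h+w^{-k}\nabla h$ on $\triangle\setminus\{0\}$ and applying H\"older's inequality — noting that the resulting negative power of $|w|$ is integrable over $\triangle$ precisely when $q<2p/(p+4)$, a range nonempty exactly because $p>4$ — one obtains $g,\nabla g\in L^q(\triangle)$. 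That this pointwise gradient really is the distributional gradient of $g$ on all of $\triangle$ follows by integrating by parts on $\triangle\setminus\overline{\triangle_\epsilon}$ and letting $\epsilon\to0$, the boundary contribution over $b\triangle_\epsilon$ being $o(1)$ by the same estimate used to prove \eqref{bg}. Since $\bar\partial g=w^{-k}\bar\partial h$ a.e.\ on $\triangle$, substituting $g(\zeta)=\zeta^{-k}h(\zeta)$ and $\bar\partial g(\zeta)=\zeta^{-k}\bar\partial h(\zeta)$ into $g=Sg+T\bar\partial g$ and multiplying by $2\pi i$ gives i).

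For part ii), I would begin with the elementary decomposition
$$\frac{1}{\zeta-w}=\frac{w^k}{\zeta^k(\zeta-w)}+\sum_{j=0}^{k-1}\frac{w^j}{\zeta^{j+1}},\qquad \zeta\ne w,\ \zeta\ne0,$$
multiply by $h(\zeta)$, and integrate against $\frac{-1}{2\pi i}\,d\bar\zeta\wedge d\zeta$ over $\triangle$ to get
$$Th(w)=w^kT\!\left(w^{-k}h\right)(w)+\sum_{j=0}^{k-1}c_jw^j,$$
with $c_j:=\frac{-1}{2\pi i}\int_\triangle \zeta^{-j-1}h(\zeta)\,d\bar\zeta\wedge d\zeta$. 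Here one uses that the weighted Sobolev embedding (as in \eqref{em}) gives $h\in C^{k-1,\alpha}(\triangle)$ for some $\alpha>0$, so that $\mathcal P_kh=0$ forces $|h(\zeta)|\lesssim|\zeta|^{k-1+\alpha}$ near $0$ and each $c_j$ is finite, and that $w^{-k}h\in L^p(\triangle,|w|^2)\subset L^s(\triangle)$ for some $s>2$ (again because $p>4$), so $T(w^{-k}h)$ is well defined and lies in $C^{\alpha'}(\overline{\triangle})$. The sum on the right is a holomorphic polynomial of degree $\le k-1$, hence unchanged by $\tilde{\mathcal P}_k$, so applying $\tilde{\mathcal P}_k$ to the displayed identity reduces ii) to showing $\tilde{\mathcal P}_k\!\left(w^kT(w^{-k}h)\right)=0$. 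For this, note that $w^kT(w^{-k}h)=Th-\sum_jc_jw^j$ is of class $C^{k-1}$ near $0$ — because $Th\in W^{k+1,p}(\triangle,|w|^2)\subset C^{k,\alpha}$ by \eqref{To} and the weighted embedding — while $|w^kT(w^{-k}h)(w)|\lesssim|w|^k$ by continuity of $T(w^{-k}h)$; a $C^{k-1}$ function vanishing to order $k$ at the origin has vanishing $(k-1)$-jet there, hence vanishing holomorphic Taylor truncation. This yields $Th-\tilde{\mathcal P}_k(Th)=w^kT(w^{-k}h)$, which is ii).

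The main obstacle I anticipate is the bookkeeping at $w=0$: showing that $g$ belongs to $W^{1,q}(\triangle)$ for some $q>1$ (so that the Cauchy--Green formula in the form stated in the excerpt applies directly to $g$), and confirming that the polynomial remainder in ii) is exactly the holomorphic Taylor truncation of $Th$ rather than just some polynomial of degree $\le k-1$. Both reductions rely on $p>4$: this is exactly the threshold at which the relevant weighted $L^1$ integrals near the origin converge and at which the weighted Sobolev embedding improves to $C^{k-1,\alpha}$ with $\alpha>0$.
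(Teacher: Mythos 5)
Your proof is correct, but it differs from the paper's in both parts, in ways worth noting. For part i), you establish a priori that $g=w^{-k}h$ lies in $W^{1,q}(\triangle)$ for some $q>1$ (via H\"older against the weighted bounds of Corollary \ref{tr}, using $p>4$ to find room for $q\in(1,2p/(p+4))$), and then you invoke the Cauchy--Green formula on $g$ directly. The paper instead avoids ever placing $g$ in a Sobolev space over the full disc: it applies Cauchy--Green on the annulus $\triangle\setminus\overline{\triangle_\epsilon}$ and shows the inner boundary term $\int_{b\triangle_\epsilon}\zeta^{-k}(\zeta-w)^{-1}h(\zeta)\,d\zeta$ tends to $0$ as $\epsilon\to0$, using the pointwise bound $|h(\zeta)|\lesssim|\zeta|^{k-1+\alpha}$ from the weighted embedding. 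The two arguments are about equally long; yours is slightly more functional-analytic (you must justify that the pointwise gradient on $\triangle\setminus\{0\}$ is the distributional gradient on $\triangle$, which you correctly do by the same kind of vanishing-boundary-term estimate), while the paper's is more computational and sidesteps the removable-singularity issue entirely. For part ii), the paper first computes $\partial Th(0)=-\frac{1}{2\pi i}\int_\triangle\zeta^{-2}h\,d\bar\zeta\wedge d\zeta$ explicitly via a cut-off and Stokes' theorem, inducts to obtain the full Taylor truncation $\tilde{\mathcal P}_k(Th)=-\sum_{l<k}\frac{w^l}{2\pi i}\int_\triangle\zeta^{-l-1}h\,d\bar\zeta\wedge d\zeta$, and only then plugs in the geometric-series kernel identity. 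You invert the order: you derive $Th=w^kT(w^{-k}h)+\sum_jc_jw^j$ directly from the kernel identity and then identify the polynomial a posteriori, by observing that $w^kT(w^{-k}h)$ is $C^{k-1}$ near $0$ (via $Th\in W^{k+1,p}(\triangle,|w|^2)\subset C^{k,\alpha}$) and vanishes to order $k$, forcing its $(k-1)$-jet and hence its holomorphic Taylor truncation to vanish. This is a slightly slicker identification that avoids the explicit cut-off computation; the paper's version has the minor advantage of producing the Taylor coefficients in closed form, which is the ingredient it reuses elsewhere. Both arguments lean on $p>4$ in exactly the same places (finiteness of the $c_j$, continuity of $T(w^{-k}h)$, the $C^{k-1,\alpha}$ embedding), so neither buys additional generality.
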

\begin{proof} 
For part i), applying the Cauchy-Green formula to $w^{-k}   h $ on $\triangle \setminus  \overline{\triangle_\epsilon}$, we have for each fixed $w\ne 0$, 
\begin{equation}\label{cg}
    2\pi i w^{-k}   h(w ) = \int_{b\triangle}\frac{   h ( \zeta)} {\zeta^{k}(\zeta-w )}d\zeta  - \int_{b\triangle_\epsilon}\frac{   h ( \zeta)} {\zeta^{k}(\zeta-w )}d\zeta- \int_{ \triangle\setminus  \overline{\triangle_\epsilon(0)}}\frac{  \bar \partial h(\zeta)} {\zeta^{k}(\zeta-w )}d\bar\zeta\wedge d\zeta. 
\end{equation} 
We claim that $$ \lim_{\epsilon\rightarrow 0}  \int_{b\triangle_\epsilon}\frac{   h ( \zeta)} {\zeta^{k}(\zeta-w )}d\zeta =  0. $$
Indeed, let $g_w(\zeta): =(\zeta-w )^{-1} h(\zeta) $. Since $w\ne 0$,  $g_w \in W^{k,p}(\triangle_\epsilon, |\zeta|^2), p>4$ with $\epsilon$ sufficiently small  and $ \mathcal P_k g_w =0$. In particular, $g_w\in C^{k-1, \alpha}(\triangle_\epsilon)$ for some $\alpha>0$, with $ |g_w(\zeta)|\lesssim |\zeta|^{k-1+\alpha}$ near $0$. Thus
\begin{equation}\label{st2}
    \begin{split} \lim_{\epsilon\rightarrow 0}\left| \int_{b\triangle_\epsilon}\frac{   h ( \zeta)} {\zeta^{k}(\zeta-w )}d\zeta\right| \le     \lim_{\epsilon\rightarrow 0}   \epsilon^{-k} \int_{b\triangle_\epsilon} |g_w( \zeta)|   d\sigma_\zeta \lesssim   \lim_{\epsilon\rightarrow 0} \epsilon^{\alpha} =0. 
     \end{split}
 \end{equation}
The claim is proved. Part i) follows from the claim by letting $\epsilon \rightarrow 0$ in \eqref{cg}. 

For ii), let $\chi$ be a smooth function which is 1  near $0$, and vanishes  outside  $\triangle_{\frac{1}{2}} $. A direct computation gives that 
\begin{equation*}
    \begin{split}
-2\pi i    \partial     Th(0) = &\left.\partial  \int_\triangle \frac{\chi(\zeta) h(\zeta)}{\zeta-w}d\bar\zeta\wedge d\zeta \right|_{w=0}+\left.\partial \int_\triangle \frac{(1-\chi(\zeta)) h(\zeta)}{ \zeta-w }d\bar\zeta\wedge d\zeta\right|_{w=0} \\
=& \left. \int_{\mathbb C} \frac{\partial_w\left( \chi(\zeta+w) h(\zeta+w)\right)}{\zeta}d\bar\zeta\wedge d\zeta\right|_{w=0} +\int_\triangle \frac{(1-\chi(\zeta)) h(\zeta)}{\zeta^{2}}d\bar\zeta\wedge d\zeta\\
=&\int_{\mathbb C} \frac{\partial_\zeta\left( \chi(\zeta ) h(\zeta )\right)}{\zeta}d\bar\zeta\wedge d\zeta  +\int_\triangle \frac{(1-\chi(\zeta)) h(\zeta)}{\zeta^{2}}d\bar\zeta\wedge d\zeta\\
=&\int_{\mathbb C} \frac{  \chi(\zeta ) h(\zeta ) }{\zeta^2}d\bar\zeta\wedge d\zeta  +\int_\triangle \frac{(1-\chi(\zeta)) h(\zeta)}{\zeta^{2}}d\bar\zeta\wedge d\zeta = \int_\triangle \frac{  h(\zeta)}{\zeta^{2}}d\bar\zeta\wedge d\zeta.
    \end{split}
\end{equation*}
Here in the fourth line above we used Stokes' theorem and a similar argument as in \eqref{st2} (with $k=1$ there). Consequently with an induction, 
$$\tilde{\mathcal P}_k Th = -\sum_{l=0}^{k-1}\frac{w^l}{2\pi i} \int_\triangle \frac{h(\zeta)}{\zeta^{l+1}}d\bar\zeta\wedge d\zeta.   $$
Note that each term in the right hand side of the above is well defined due to Remark \ref{re2}.

Making use of the  following elementary identity for the Cauchy kernel: 
   \begin{equation*} 
     \frac{1}{ \zeta-w} -\sum_{l=0}^{k-1}\frac{w^{l}}{\zeta^{l+1}} = \frac{w^k}{\zeta^k(\zeta-w)},\ \ \text{for all}\ \ \zeta\ne  w\ \text{nor}\ \  0,
\end{equation*}
we immediately  get
\begin{equation*}
    \begin{split}
   Th(w) - \tilde{\mathcal P}_k Th(w) =  -    \frac{w^k}{2\pi i} \int_\triangle \frac{h(\zeta)}{\zeta^{k}(\zeta-w)}d\bar\zeta\wedge d\zeta = w^k T\left( w^{-k}h\right),\ \ \ \ w\in \triangle. 
    \end{split}
\end{equation*}

\end{proof}

\begin{lem}\label{2s}
  If $h\in W^{2, p}(\triangle, |w|^2), p>4$, then   $$  S\partial h = \partial S h  +S(\bar w^2\bar\partial h)\ \ \text{on} \ \ \triangle.  $$
\end{lem}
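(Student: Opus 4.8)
The plan is to reduce the identity to a single integration by parts along the unit circle $b\triangle$. First, since $p>4$, the embedding argument used around \eqref{em} gives $h\in W^{2,q}(\triangle)\subset C^{1,\alpha}(\overline{\triangle})$ for some $q>2$ and $\alpha>0$; in particular $\partial h$, $\bar\partial h$ and hence $\bar w^2\bar\partial h$ are continuous on $\overline{\triangle}$, so all three terms $S\partial h$, $\partial Sh$ and $S(\bar w^2\bar\partial h)$ are classically defined holomorphic functions on $\triangle$. Moreover $Sh$ is holomorphic on $\triangle$, and differentiating under the integral sign gives, for $w\in\triangle$,
\begin{equation*}
 \partial Sh(w)=\frac{1}{2\pi i}\int_{b\triangle}\frac{h(\zeta)}{(\zeta-w)^2}\,d\zeta .
\end{equation*}

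Now fix $w\in\triangle$ and parametrize $b\triangle$ by $\zeta=e^{i\theta}$, $\theta\in[0,2\pi]$. The function $\theta\mapsto h(e^{i\theta})/(e^{i\theta}-w)$ is $C^1$ and $2\pi$-periodic — here it is crucial that $w\notin b\triangle$ — so the integral over $[0,2\pi]$ of its $\theta$-derivative vanishes. Using $\tfrac{d}{d\theta}\zeta=i\zeta$, $\tfrac{d}{d\theta}\bar\zeta=-i\bar\zeta$, $\tfrac{d}{d\theta}\big[h(\zeta)\big]=i\zeta\,\partial h(\zeta)-i\bar\zeta\,\bar\partial h(\zeta)$, and recalling that on $b\triangle$ one has $i\zeta\,d\theta=d\zeta$ while $i\bar\zeta\,d\theta=\tfrac{\bar\zeta}{\zeta}\,d\zeta=\bar\zeta^{2}\,d\zeta$, the vanishing of that integral becomes
\begin{equation*}
 \int_{b\triangle}\frac{\partial h(\zeta)}{\zeta-w}\,d\zeta-\int_{b\triangle}\frac{\bar\zeta^{2}\,\bar\partial h(\zeta)}{\zeta-w}\,d\zeta-\int_{b\triangle}\frac{h(\zeta)}{(\zeta-w)^{2}}\,d\zeta=0 .
\end{equation*}
Dividing by $2\pi i$ and identifying the three terms as $S\partial h(w)$, $S(\bar w^2\bar\partial h)(w)$ and $\partial Sh(w)$ respectively yields $S\partial h=\partial Sh+S(\bar w^2\bar\partial h)$ on $\triangle$. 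Equivalently, this is just the statement that the restriction to the closed curve $b\triangle$ of the exact $1$-form $d\!\left(\tfrac{h(\zeta)}{\zeta-w}\right)=\big(\tfrac{\partial h}{\zeta-w}-\tfrac{h}{(\zeta-w)^{2}}\big)d\zeta+\tfrac{\bar\partial h}{\zeta-w}\,d\bar\zeta$ integrates to zero, combined with the relation $d\bar\zeta=-\bar\zeta^{2}\,d\zeta$ on $b\triangle$.

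There is no serious obstacle here. The only points requiring care are checking the regularity needed to differentiate $Sh$ under the integral and to run the integration by parts on $b\triangle$ — both supplied by the embedding $W^{2,p}(\triangle,|w|^{2})\subset C^{1,\alpha}(\overline{\triangle})$ valid for $p>4$ — and correctly recording the boundary relation $i\bar\zeta\,d\theta=\bar\zeta^{2}\,d\zeta$ (i.e. $d\bar\zeta=-\bar\zeta^{2}\,d\zeta$ on $b\triangle$), so that the $\bar\partial h$ contribution is matched against $S(\bar w^2\bar\partial h)$ rather than against some other expression.
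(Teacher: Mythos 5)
Your proof is correct and is essentially the same computation as the paper's: the paper starts from $S\partial h$, expresses $\partial_\zeta h(e^{i\theta})$ in terms of $\partial_\theta\big(h(e^{i\theta})\big)$ and $\bar\partial_\zeta h$, and integrates by parts in $\theta$; you instead observe that $\int_0^{2\pi}\partial_\theta\big[h(e^{i\theta})/(e^{i\theta}-w)\big]\,d\theta=0$ and expand, which is the same integration by parts packaged as the vanishing of the integral of an exact $1$-form over the closed curve $b\triangle$. The regularity check via the embedding $W^{2,p}(\triangle,|w|^2)\subset C^{1,\alpha}(\overline\triangle)$ for $p>4$ and the boundary relation $d\bar\zeta=-\bar\zeta^2\,d\zeta$ are both handled correctly.
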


\begin{proof}

Note that $h\in W^{2, p}(\triangle, |w|^2)\subset C^{1,\alpha}(\triangle)$ for some $\alpha>0$. So both sides of the above equality are actually in the strong sense. The lemma  follows from  a  direct computation below.   For   $w\in \triangle$, 
\begin{equation*}
    \begin{split}
     S \partial h(w)  &=  \frac{1}{2\pi i } \int_{0}^{{2\pi}}  \frac{\partial_\zeta h(e^{i\theta}) ie^{i\theta}}{ e^{i\theta} - w }   d\theta =  \frac{1}{2\pi i  } \int_{0}^{{2\pi}}  \frac{\partial_\theta\left( h(e^{i\theta})\right) +i\bar\partial_\zeta h(e^{i\theta})e^{-i\theta}}{ e^{i\theta} - w }   d\theta\\
     &= - \frac{1}{2\pi i } \int_{0}^{{2\pi}} \partial_\theta \left(\frac{1}{ e^{i\theta} - w }\right)    h(e^{i\theta})d\theta    +\frac{1}{2\pi i   } \int_{0}^{{2\pi}}  \frac{ \bar\partial_\zeta h(e^{i\theta})e^{-2i\theta}}{ e^{i\theta} - w }  ie^{i\theta} d\theta\\
    & =   \frac{1}{2\pi i } \int_{0}^{{2\pi}} \partial_w \left(\frac{1}{ e^{i\theta} - w }\right)    h(e^{i\theta})ie^{ i\theta}d\theta + \frac{1}{2\pi i } \int_{b\triangle}  \frac{  \bar\partial_\zeta h(\zeta)\bar\zeta^2}{ \zeta - w }   d \zeta\\
   & =   \frac{1}{2\pi i } \int_{b\triangle} \partial_w \left(\frac{1}{ \zeta- w }\right)    h(\zeta)d\zeta +S\left(\bar w^2\bar\partial h\right) =   \partial  S   h(w)  +S\left(\bar w^2\bar\partial h\right)(w).
    \end{split}
\end{equation*}
\end{proof}


\medskip

\begin{proof}[Proof of Proposition \ref{soe}: ] In view of Lemma \ref{tu},  we only need to prove the estimate in the proposition when  $s\le k-1$. 
   First consider the case when  $0\le t\le k-1$. For  fixed $w_1\in \triangle$, 
     $h_{w_1} : = \partial_{ w_2}^{s} \tilde  u(w_1, \cdot)\in W^{k-s, p}(\triangle, |w_2|^2)$, $\mathcal P_{k-s}h_{w_1} =0$ by \eqref{p2}, and    $\bar\partial_{w_2}  h_{w_1} =   \partial_{ w_2}^{s} \tilde  f_2 $. We apply Lemma \ref{el}, part i) 
to  $h_{w_1} $ and obtain
$$2\pi i w_2^{-k+s} \partial_{ w_2}^{s} \tilde  u(w_1, w_2) = \int_{b\triangle}\frac{ \partial_{\zeta}^{s} \tilde  u (w_1, \zeta)} {\zeta^{k-s}(\zeta-w_2)}d\zeta - \int_{ \triangle}\frac{ \partial_{\zeta}^{s} \tilde  f_2(w_1, \zeta)} {\zeta^{k-s}(\zeta-w_2)}d\bar\zeta\wedge d\zeta.  $$
Consequently, 
\begin{equation*}
    \begin{split}
         w_2^{-k+s} \partial_{ w_1}^t \partial_{ w_2}^{s} \tilde  u (w_1, w_2) = &\frac{1}{2\pi i}\left(\partial_{w_1}^{t}\int_{b\triangle}\frac{   \partial_{ \zeta}^{s}\left(\bar\zeta^{k-s}\tilde  u (w_1, \zeta)\right)} {\zeta-w_2}d\zeta - \int_{ \triangle}\frac{ \zeta^{-k+s}  \partial_{ w_1}^t\partial_{ \zeta}^{s} \tilde  f_2(w_1, \zeta)} {\zeta-w_2}d\bar\zeta\wedge d\zeta\right)\\
        =&   \partial_{w_1}^{t}S_2\left(  \partial_{ w_2}^{s} \left(\bar w_2^{k-s}    \tilde  u\right)\right) +  T_2\left(w_2^{-k+s}  \partial_{ w_1}^t\partial_{ w_2}^{s}\tilde  f_2 \right) \\ 
        =&: B_1+B_2. 
    \end{split}
\end{equation*}

By  \eqref{T_j} and Proposition \ref{tf},
\begin{equation*}
    \begin{split}
     \left\|B_2\right\| _{L^p({\triangle\times \triangle}, |w_2|^2)}\lesssim
       &\left\|T_2\left(w_2^{-k+s}  \partial_{ w_1}^t\partial_{ w_2}^{s}\tilde  f_2 \right)\right\|_{L^p({\triangle\times \triangle}, |w_2|^2)}\lesssim  \left\| w_2^{-k+s}  \partial_{ w_1}^t\partial_{ w_2}^{s}\tilde  f_2  \right\|_{L^p({\triangle\times \triangle}, |w_2|^2)}\lesssim \left\| \mathbf f \right\|_{  W^{k,p}(\mathbb H)}.   
    \end{split}
\end{equation*} 
For $B_1$, if $s=0$, then $  B_1 =  S_2\left( \bar w_2^{k}  \partial_{ w_1}^{t }   \tilde  u\right)   $, where  
 $   \bar w_2^{k} \partial_{w_1}^{t }\tilde  u \in W^{1, p}({\triangle\times \triangle}, |w_2|^2)$ as $t\le k-1$. Then   \eqref{S_j} and Lemma \ref{tu} give 
\begin{equation*}
    \begin{split}
     \left\|B_1\right\|_{L^p({\triangle\times \triangle}, |w_2|^2)}\lesssim&   \left\|  S_2\left( \bar w_2^{k}  \partial_{ w_1}^{t }   \tilde  u\right)     \right\|_{L^{p}({\triangle\times \triangle}, |w_2|^2)}\lesssim    \left\|   \bar w_2^{k}  \partial_{ w_1}^{t }   \tilde  u      \right\|_{W^{1, p}({\triangle\times \triangle}, |w_2|^2)} \\
     \lesssim& \|\tilde u\|_{W^{k, p}({\triangle\times \triangle}, |w_2|^2)}\lesssim \left\| \mathbf f \right\|_{  W^{k,p}(\mathbb H)}.       \end{split}
\end{equation*}
For the case  $s \ge 1$, since $s\le k-1$,  $  \partial_{ w_2}^{s-1}\left(\bar w_2^{k-s}\tilde  u\right)(w_1, \cdot) \in W^{2, p}(\triangle, |w_2|^2)  $ for fixed $w_1\in \triangle$. Applying Lemma \ref{2s}  to $ \partial_{ w_2}^{s-1}\left(\bar w_2^{k-s}\tilde  u\right)(w_1, \cdot) $ and using the fact that  $\bar\partial_{w_2} \tilde u = \tilde f_2$, we further write  
\begin{equation*}
    \begin{split}
         B_1 =  & \partial_{w_1}^{t}\partial_{w_2}S_2\left( \partial_{ w_2}^{s-1}\left(\bar w_2^{k-s}\tilde  u\right)\right) +  \partial_{w_1}^{t} S_2\left( \bar w_2^2\partial_{ w_2}^{s-1}\left((k-s)\bar w_2^{k-s-1}\tilde  u + \bar w_2^{k-s}\tilde f_2\right) \right)\\
         =& \partial_{w_2}S_2\left(\partial_{w_1}^{t} \partial_{ w_2}^{s-1}\left(\bar w_2^{k-s}\tilde  u\right)\right) + (k-s)   S_2\left( \partial_{w_1}^{t}\partial_{ w_2}^{s-1}\left(\bar w_2^{k-s+1}\tilde  u\right) \right) +S_2\left( \partial_{w_1}^{t}\partial_{ w_2}^{s-1}\left( \bar w_2^{k-s+2}\tilde f_2\right) \right).  
    \end{split}
\end{equation*}
Note that $\partial_{w_1}^{t} \partial_{ w_2}^{s-1}\left(\bar w_2^{l}\tilde  u\right)\in W^{1, p}({\triangle\times \triangle}, |w_2|^2)$ for $ l = k-s, k-s+1, k-s+2$. By    \eqref{S11}, Proposition \ref{tf}  and \eqref{tuf},
\begin{equation*}
    \begin{split}
     \left\|B_1\right\|_{L^p({\triangle\times \triangle}, |w_2|^2)}\lesssim&  \left\|     \partial_{ w_1}^{t } \partial_{w_2}^{s -1}  \left(\bar w_2^{k-s}\tilde  u\right)   \right\|_{W^{1, p}({\triangle\times \triangle}, |w_2|^2)} +\left\|     \partial_{ w_1}^{t } \partial_{w_2}^{s -1}  \left(\bar w_2^{k-s+1}\tilde  u\right)   \right\|_{W^{1, p}({\triangle\times \triangle}, |w_2|^2)} \\
     &+\left\|     \partial_{ w_1}^{t } \partial_{w_2}^{s -1}  \left(\bar w_2^{k-s+2}\tilde  f_2\right)   \right\|_{W^{1, p}({\triangle\times \triangle}, |w_2|^2)}\\
     \lesssim& \|\tilde u\|_{W^{k, p}({\triangle\times \triangle}, |w_2|^2)}+  \left\|  \tilde{   f}_2 \right\|_{  W^{k,p}({\triangle\times \triangle}, |w_2|^2)}\lesssim \left\| \mathbf f \right\|_{  W^{k,p}(\mathbb H)}.       \end{split}
\end{equation*}

Finally, we treat the case when  $t=k$ (and so $s=0$).  According to the  definition of $\tilde u$, 
\begin{equation*}
    \begin{split}
        \tilde u = &T_1\tilde f_1 + S_1T_2 \tilde f_2 - T_1\tilde{\mathcal P}_{2, k}\tilde f_1 - S_1\tilde{\mathcal P}_{2, k} T_2 \tilde f_2\\
        =& T_1\tilde f_1 + S_1\left(T_2 - \tilde{\mathcal P}_{2, k} T_2 \right) \tilde f_2\\
        = & T_1\tilde f_1 + S_1\left(w_2^kT_2\left(w_2^{-k}  \tilde f_2\right)\right).
    \end{split}
\end{equation*}
Here  we   used the fact that $ \mathcal P_{2, k}\tilde f_1 =0$ by   \eqref{p2f}     in the second equality, and Lemma \ref{el}  part ii) in the third equality  for each fixed $w_1\in \triangle$. Consequently, 
\begin{equation*}
    \begin{split}
        w_2^{-k } \partial_{ w_1}^k   \tilde  u  = & \partial_{ w_1}^kT_1 \left( w_2^{-k }  \tilde f_1\right)  +     T_2  \left (\partial_{ w_1}^k S_1 \left(w_2^{-k} \tilde f_2\right)\right) =: C_1+C_2. 
    \end{split}
\end{equation*}
For $C_1$, by  \eqref{T11}   and Proposition \ref{tf} (with $s=0$ there),
\begin{equation*}
    \begin{split}
         \left\|C_1\right\|_{L^p({\triangle\times \triangle}, |w_2|^2)}\lesssim &  \sum_{j=0}^{k-1}\left\| w_2^{-k} \nabla_{w_1}^j\tilde f_2   \right\|_{L^{p}({\triangle\times \triangle}, |w_2|^2)}   \lesssim \|\mathbf f\|_{W^{k, p}(\mathbb H)}.   
    \end{split}
\end{equation*}
For $C_2$, by  \eqref{T11} (with $k=1$ there),  \eqref{S11} and Proposition \ref{tf}  (with $s=0$ there).
\begin{equation*}
    \begin{split}
         \left\|C_2\right\|_{L^p({\triangle\times \triangle}, |w_2|^2)}\lesssim &  \left\|\partial_{ w_1}^k S_1 \left(w_2^{-k} \tilde f_2\right)   \right\|_{L^p({\triangle\times \triangle}, |w_2|^2)}  \lesssim \sum_{j=0}^k \left\| w_2^{-k} \nabla^j_{w_1}\tilde f_2  \right\|_{L^{p}({\triangle\times \triangle}, |w_2|^2)}\lesssim \|\mathbf f\|_{W^{k, p}(\mathbb H)}.   
    \end{split}
\end{equation*}
The   proof of the proposition is thus complete.

\end{proof}

\subsection{Proof of the main theorem}
\begin{proof}[Proof of Theorem \ref{main}: ]
Let $\mathcal T_k \mathbf f: = \phi^*\tilde u + u_k$ on $\mathbb H$, where $\tilde u$ is defined in \eqref{tud}, and $u_k$ satisfies \eqref{pu}. Then $\bar\partial \mathcal T_k \mathbf f = \mathbf f$ on $\mathbb H$. To show the desired estimate for $\|\mathcal T_k \mathbf f\|_{ W^{k, p}(\mathbb H)}$, since the anti-holomorphic derivatives of $\mathcal T_k \mathbf f$  are shifted to that of ${\mathbf f}$, we only need to estimate $ \left\|\partial_{ z_1}^{l_1}\partial_{ z_2}^{l_2}   \left(\phi^*\tilde u\right)\right\|_{L^p(\mathbb H)}$, $l_1, l_2\in \mathbb Z^+\cup\{0\}, l_1+l_2\le k$. Note that 
$$ \partial_{ z_1}^{l_1}\partial_{ z_2}^{l_2} \left(\phi^*\tilde u\right)  = \sum_{s+t \le l_1+l_2, t\ge l_1} C_{l_1, l_2, t, s}z_1^{t-l_1}z_2^{-t-l_2+s}  \left( \partial_{ w_1}^{t}\partial_{ w_2}^{s}\tilde  u\right) \left(\frac{z_1}{z_2}, z_2\right)$$
for some constants $C_{l_1, l_2, t, s} $ dependent on $l_1, l_2, t, s$, and $|z_1|\le |z_2|$ on $\mathbb H$. Then by a change of variables, 
\begin{equation*}
    \begin{split}
    \left\|\partial_{ z_1}^{l_1}\partial_{ z_2}^{l_2}  \left(\phi^*\tilde u\right)\right\|_{L^p(\mathbb H)}\lesssim& \sum_{s+t\le l_1+l_2, t\ge l_1}\left\||w_2|^{-l_1-l_2+s}   \partial_{ w_1}^{t}\partial_{ w_2}^{s}\tilde  u  \left(w_1, w_2\right)  \right\|_{L^p({\triangle\times \triangle}, |w_2|^2)}\\
    \le&\sum_{s+t\le k} \left\||w_2|^{-k +s}  \partial_{ w_1}^{t}\partial_{ w_2}^{s}\tilde  u \left(w_1, w_2\right)  \right\|_{L^p({\triangle\times \triangle}, |w_2|^2)}.
    \end{split}
\end{equation*}
The rest of the proof follows from Proposition \ref{soe}. 

\end{proof} 
\medskip

The following Kerzman-type example demonstrates that the $\bar\partial$ problem on $\mathbb H$ with $W^{k, p}$ data in general does not expect solutions in $W^{k, p+\epsilon}$,  $\epsilon>0$, which verifies the optimality of Theorem \ref{main}.

\begin{example}\label{ex}
  For each $k\in \mathbb Z^+$ and $ 2<p<\infty$, let $\mathbf f= (z_2-1)^{k-\frac{2}{{p}}}d\bar z_1 $ on $\mathbb H$, $\frac{1}{2}\pi <\arg (z_2-1)<\frac{3}{2}\pi$. Then $\mathbf f\in W^{k, \tilde  p}(\mathbb H)$ for all $2<\tilde  p< p$ and   is  $\bar\partial$-closed on $\mathbb H$. However, there does not exist a solution $u\in W^{k,p}(\mathbb H)$ to $\bar\partial u =\mathbf f$ on $\mathbb H$.  
 \end{example}

 \begin{proof}
 Clearly $\mathbf f\in W^{k, \tilde  p}(\mathbb H) $ for all $2<\tilde  p< p$ and    is $\bar\partial$-closed on $\mathbb H$. 
Arguing by contradiction, suppose there exists some $u\in W^{k,p}(\mathbb H )$ satisfying $\bar\partial u =\mathbf f $ on $\mathbb H$. In particular, since $\triangle_{\frac{1}{2}}\times(\triangle \setminus \overline{\triangle_{\frac{1}{2}}}) \subset \mathbb H $, there exists some holomorphic function $h$ on $\triangle_{\frac{1}{2}}\times(\triangle \setminus \overline{\triangle_{\frac{1}{2}}})$ such that $  u  |_{ \triangle_{\frac{1}{2}}\times(\triangle \setminus \overline{\triangle_{\frac{1}{2}}})}= (z_2-1)^{k-\frac{2}{{p}}}\bar z_1+h \in W^{k,p}(\triangle_{\frac{1}{2}}\times(\triangle \setminus \overline{\triangle_{\frac{1}{2}}}))$. 

For  each fixed $(r, z_2) \in U: =  \left(0,\frac{1}{2}\right)\times \left( \triangle\setminus \overline{ \triangle_{\frac{1}{2}}}\right)\subset \mathbb R\times \mathbb C$, consider  
    $$v(r, z_2): =\int_{|z_1|= r} {\tilde u}(z_1, z_2) dz_1. $$
Then  with  a similar argument as in the proof of Example \ref{ex2}, one can see that $v\in W^{k,p}(U)$. 
Note that   $h(\cdot, z_2)$ is holomorphic on $\triangle_{\frac{1}{2}}$ for each fixed $z_2\in \triangle\setminus \overline{\triangle_\frac{1}{2}}$. Thus  for   fixed $(r, z_2)\in U$,  Cauchy's theorem gives
  \begin{equation*}
     v(r, z_2) =\int_{|z_1|=r} z_2(z_2-1)^{k-\frac{2}{{p}}}\bar z_1dz_1  = 2\pi r^2i z_2(z_2-1)^{k-\frac{2}{{p}}},
  \end{equation*}
  which does not belong to  $W^{k,p}(U)$. A contradiction!
  
 \end{proof}

\bibliographystyle{alphaspecial}

\fontsize{11}{11}\selectfont
\vspace{0.7cm}

\noindent pan@pfw.edu,

\vspace{0.2 cm}

\noindent Department of Mathematical Sciences, Purdue University Fort Wayne, Fort Wayne, IN 46805-1499, USA.\\

\vspace{0.5cm}

\noindent zhangyu@pfw.edu,

\vspace{0.2 cm}

\noindent Department of Mathematical Sciences, Purdue University Fort Wayne, Fort Wayne, IN 46805-1499, USA.\\
\end{document}